\RequirePackage[l2tabu, orthodox]{nag}
\documentclass[a4paper,noamsfonts]{amsart}

\usepackage[utf8]{inputenc}
\usepackage[T1]{fontenc}

\usepackage{cfr-lm}
\usepackage{bm}
\newcommand{\mathbold}{\bm}

\usepackage{amsmath}
\usepackage{amsthm}
\usepackage{amssymb}
\usepackage[foot]{amsaddr}
\usepackage{mathtools}

\usepackage{tikz}
\usetikzlibrary{cd}
\usepackage{framed}
\usepackage{xcolor}
\usepackage{graphicx}
\usepackage{pinlabel}
\usepackage{subcaption}
\captionsetup{subrefformat=parens}
\usepackage{enumitem}
\usepackage[backend=bibtex,style=numeric, maxnames=8, isbn=false, sortcites=true, backref]{biblatex}
\addbibresource{biblio.bib}

\usepackage{hyperref}
\hypersetup{
    colorlinks,
    linkcolor={red!50!black},
    citecolor={blue!50!black},
    urlcolor={blue!80!black}
}
\usepackage[capitalise,noabbrev]{cleveref} % load after hyperref
\usepackage{microtype}

\newtheorem{thm}{Theorem}
\numberwithin{thm}{section}
\newtheorem*{thm*}{Theorem}
\newtheorem{mainthm}{Main Theorem}

\newtheorem{lem}[thm]{Lemma}
\AddToHook{env/lem/begin}{\crefalias{thm}{lem}}
\newtheorem{prp}[thm]{Proposition}
\AddToHook{env/prp/begin}{\crefalias{thm}{prp}}
\newtheorem{cor}[thm]{Corollary}
\AddToHook{env/cor/begin}{\crefalias{thm}{cor}}
\newtheorem{obs}[thm]{Observation}
\AddToHook{env/obs/begin}{\crefalias{thm}{obs}}
\theoremstyle{definition}
\newtheorem{defn}[thm]{Definition}
\AddToHook{env/defn/begin}{\crefalias{thm}{defn}}
\newtheorem{ex}[thm]{Example}
\AddToHook{env/ex/begin}{\crefalias{thm}{ex}}
\newtheorem*{discprob*}{Geometric Recognition Problem}
\newtheorem{cons}[thm]{Construction}
\AddToHook{env/cons/begin}{\crefalias{thm}{cons}}
\theoremstyle{remark}
\newtheorem{rem}[thm]{Remark}
\newtheorem{history}[thm]{Historical remark}

\newtheorem{subcons}{Step}[thm]

\makeatletter
\newcommand{\WronglyDeclarePairedDelimiter}[3]{%
  \expandafter\DeclarePairedDelimiter\csname RIGHT\string#1\endcsname{#2}{#3}%\lstinputlisting[language=Octave]{BitXorMatrix.m}
  \newcommand#1{%
    \@ifstar{\csname RIGHT\string#1\endcsname}
            {\csname RIGHT\string#1\endcsname*}%
  }%
}
\makeatother
\WronglyDeclarePairedDelimiter{\abs}{\lvert}{\rvert}
\WronglyDeclarePairedDelimiter{\norm}{\lVert}{\rVert}

\newcommand{\df}{\textit}

\newcommand{\union}{\cup}
\newcommand{\inter}{\cap}

\newcommand{\mc}{\mathcal}

%https://tex.stackexchange.com/questions/96658/writing-the-existential-quantifier-with-lower-and-upper-limits
\usepackage{scalefnt}
\newcommand\ScaleExists[1]{\vcenter{\hbox{\scalefont{#1}$\exists$}}}
\DeclareMathOperator*\bigexists{%
  \vphantom\sum
  \mathchoice{\ScaleExists{2.5}}{\ScaleExists{1.4}}{\ScaleExists{1}}{\ScaleExists{0.75}}}

\newcommand{\git}{\mathord{
  \mathchoice{/\mkern-4mu/}% \displaystyle
    {/\mkern-4mu/}% \textstyle
    {/\mkern-3mu/}% \scriptstyle
    {/\mkern-3mu/}}}% \scriptscriptstyle

\renewcommand{\H}{\mathbb{H}}
\renewcommand{\P}{\mathbb{P}}
\newcommand{\R}{\mathbb{R}}
\newcommand{\C}{\mathbb{C}}

\newcommand{\Z}{\mathbb{Z}}
\newcommand{\Q}{\mathbb{Q}}

\newcommand{\Sph}{\mathbb{S}}
\newcommand{\V}{\mathbold{Z}}
\newcommand{\FPer}{\tilde{\Gamma}}

\DeclareMathOperator{\im}{im}
\DeclareMathOperator{\PSL}{\mathsf{PSL}}

\DeclareMathOperator{\SL}{\mathsf{SL}}

\DeclareMathOperator{\PO}{\mathsf{PO}}

\DeclareMathOperator{\Isom}{Isom}
\DeclareMathOperator{\Stab}{Stab}

\DeclareMathOperator{\QH}{QH}

\DeclareMathOperator{\Fix}{Fix}
\DeclareMathOperator{\tr}{tr}
\DeclareMathOperator{\Hom}{Hom}

\DeclareMathOperator{\rad}{rad}
\DeclareMathOperator{\cen}{cen}
\DeclareMathOperator{\Pen}{Pen}
\DeclareMathOperator{\Ax}{Ax}

\DeclareMathOperator{\hconv}{h.\!conv}

\begin{document}

\title{Peripheral subgroups of Kleinian groups}
\author[A. Elzenaar]{Alex Elzenaar}
\address{School of Mathematics, Monash University, Melbourne, Australia}
\email{alexander.elzenaar@monash.edu}
\thanks{The author was supported by an Australian Government Research Training Program (RTP) Scholarship during the period that this work was undertaken.}

\subjclass[2020]{Primary 20H10; Secondary 20F65, 30F40, 51B10, 57K32}
\keywords{hyperbolic $3$-manifolds, Teichm\"uller spaces, quasiconformal deformations, character varieties, hyperbolic convex core, discreteness testing, Ford domains}

\begin{abstract}
  The conformal boundary of a hyperbolic $3$-manifold $M$ is a union of Riemann surfaces. If any of these Riemann surfaces has a nontrivial
  Teichm\"uller space, then the hyperbolic metric of $M$ can be deformed quasi-isometrically. These deformations correspond to small pertubations
  in the matrices of the holonomy group $ \pi_1(M) \subset \PSL(2,\C) $, which together give an island of discrete representations around the identity map in $ X=\Hom(\pi_1(M), \PSL(2,\C)) $.
  Determining the extent of this island is a hard problem. If $M$ is geometrically finite and its convex core boundary is pleated only along simple closed curves,
  then we cut up its conformal boundary in a way governed by the pleating combinatorics to produce a fundamental domain for $ \pi_1(M) $ that
  is combinatorially stable under small deformations, even those which change the pleating structure. We give a computable region in $X$, cut out by polynomial
  inequalities over $\R$, within which this fundamental domain is valid: all the groups in the region have peripheral structures that
  look `coarsely similar', in that they come from real-algebraically deforming a fixed conformal polygon and its side-pairings. The union of all these
  regions for different pleating laminations gives a countable cover, with sets of controlled topology, of the entire quasi-isometric deformation space
  of $ \pi_1(M) $---which is known to be topologically wild.
\end{abstract}

\maketitle

\section{Introduction}
A discrete subgroup of $ \PSL(2,\C) $ is the holonomy group of a complete hyperbolic $3$-orbifold and is called a
\df{Kleinian group}. These groups were first introduced by Poincar\'e~\cite{poincare83} and later studied extensively from the point of view
of quasiconformal analysis and geometric function theory. They experienced a resurgence of interest following their application in Thurston's
revolutionary work in three-dimensional topology~\cite{thurston82}, and over the past two decades many structural conjectures have been settled:
the tameness conjecture by Agol~\cite{agol04} and Calegari and Gabai~\cite{calegari06}, the ending lamination conjecture by Brock, Canary, and
Minsky~\cite{minsky10,brock12}, and the density conjecture by Namazi and Souto~\cite{namazi12} and Ohshika~\cite{ohshika11}. Important questions
still remain about concrete problems, as the invariants and properties that these conjectures describe are often hard to translate into a
computable or otherwise explicit algebraic form when a particular family of Kleinian groups is written down.

We fix a geometrically finite hyperbolic $3$-manifold $ M $ (roughly, a $3$-manifold that is obtained by taking a finite sided
polyhedron in $ \H^3 $ and identifying pairs of faces) with non-empty conformal boundary, and we study the locus of discrete $\PSL(2,\C)$-representations
of the holonomy group $ \pi_1(M) $. More precisely, we determine---computably---the extent of the component of the discreteness locus which contains $ \pi_1(M) $,
by cutting it up into simple pieces so that the representations within each piece have a combinatorially stable action on $ \partial\H^3 $; i.e.\ they have
combinatorially stable fundamental domains. We use concrete hyperbolic geometry methods in order to construct appropriate fundamental domains, which depend
algebraically on the coefficients of a generating set and which are closely related to Ford domains but take into account the hyperbolic
geometry of the peripheral structures of the group. The deformations of the fundamental domains as the group parameters change are controlled
using techniques from real algebraic geometry and M\"obius geometry. Iterating through all the semi-algebraic regions produced gives a discreteness
and group recognition semi-algorithm for the representations of $\pi_1(M) $.

\subsection{The motivating problem}
To illustrate the difficulty of moving between the geometry of $M$ and the algebra of $\pi_1(M)$, we consider two natural examples of parameter spaces that
have arisen in previous work in geometric topology.
\begin{ex}\label{ex:rileychar}
  The space of Kleinian groups uniformising a $3$-ball with two drilled ideal arcs (represented by rank $1$ parabolics), the \df{Riley slice}~\cite{keen94,ems21}, is a subset
  of the set of groups generated by the parameterised pair of matrices
  \begin{displaymath}
    \C \ni \rho \mapsto \left(
    \begin{bmatrix} 1 & 1 \\ 0 & 1 \end{bmatrix}, \begin{bmatrix} 1 & 0 \\ \rho & 1 \end{bmatrix} \right) \in \PSL(2,\C)^2.
  \end{displaymath}
\end{ex}
\begin{ex}\label{ex:12cbchar}
  The space of Kleinian groups which uniformise $ (1;2)$-compression bodies~\cite{lackenbypurcell13,elzenaar24c} is a subset of the set of groups generated by the parameterised
  triple of matrices
  \begin{displaymath}
    \C^3 \ni (\alpha,\beta,\lambda) \mapsto \left(
    \begin{bmatrix} 1 & \alpha \\ 0 & 1 \end{bmatrix}, \begin{bmatrix} 1 & \beta \\ 0 & 1 \end{bmatrix}, \begin{bmatrix} \lambda & \lambda^2 - 1 \\ 1 & \lambda \end{bmatrix} \right) \in \PSL(2,\C)^3.
  \end{displaymath}
\end{ex}
The spaces parameterised by the maps of \cref{ex:rileychar} and \cref{ex:12cbchar} are very convenient to work with from the point of view of algebra: the maps are low-degree and defined over $ \Q $,
and the image groups are all non-conjugate so they define parameterisations of sections of the character variety. However, from the point of view of geometry, these spaces are not very well-behaved:
the discreteness locus in each is a complicated fractal subset which is very hard to compute. In the case of \cref{ex:rileychar}, the estimation of bounds on the discreteness locus has occupied
authors from as early as the 1950s to the present day, see the discussion in~\cite{ems24bd}. In general the locus of discreteness within a $\PSL(2,\C)$-character variety is a highly non-algebraic
subset that is known to have topologically wild closure~\cite{bromberg11,ohshika20}. Attempting to study the relationship between algebra and geometry in examples like these leads to the motivating problem of the paper.
\begin{discprob*}\label{discprob}
  Let $ G $ be a Kleinian group which admits deformations, and let $ t \mapsto G_t $ be an algebraic parameterisation of a subset of the character variety $ X(G) = \Hom(G,\PSL(2,\C))\git\PSL(2,\C) $,
  where $ G_0 = G $. The problem is, when given a parameter value $ t \in \C^n $, to decide whether $ \H^3/G_t $ is quasi-isometric to $ \H^3/G_0 $. We can rephrase this in view of
  quasiconformal deformation theory: let $ U $ be the subset of $ \C^n $ consisting of all parameters $ s $ such that $ G_s $ is discrete, and let $ U^0 $
  be the connected component of $ \operatorname{int} U $ that contains $0$; then the problem is to determine whether $ t $ lies in $ U^0 $.
\end{discprob*}

Our strategy for attacking this problem is motivated by a study of earlier work by Keen and Series~\cite{keen93,keen94} and previous work of the author with Martin and Schillewaert~\cite{ems21}. We first
construct a dense `lamination', by semi-algebraic\footnote{Throughout this paper, \df{semi-algebraic} is shorthand for semi-algebraic over $ \R $. Roughly speaking this means
sets defined by finitely many polynomial inequalities in $ \R^n $, see Basu, Pollack, and Roy~\cite[\S 2.3]{basu}.} sets, of the quasiconformal deformation space
of $G$. Each `leaf' of this lamination consists of groups with specified convex core bending lamination. We then thicken the leaves semi-algebraically; since groups with rational bending laminations
are dense, we obtain a semi-algebraic covering of the entire space.

We begin in \cref{sec:locglo} with the introduction of definitions and notation that will be used throughout the paper, together with some general machinery for decomposing
geometrically finite Kleinian groups in a purely formal way. Following this, in \cref{sec:peripheral}, we introduce the special families of subgroups (called \df{circle chains})
which induce decompositions of geometrically finite Kleinian groups in ways that are compatible with the geometry of the convex core quotient. The semi-algebraic leaves of the lamination (called
\df{pleating varieties}) are defined in \cref{sec:varieties} in terms of the existence of these geometric decompositions. Following this, in \cref{sec:formaldomains} we give
the algebraic structure that is used to talk about parameterisations of fundamental domains. Then, in \cref{sec:fundom}, we state and prove the two main theorems of the paper
which we will now describe.

\subsection{The two main theorems}
Our first main result, in \cref{sec:fundom_global}, is a construction of canonical fundamental domains for groups on each pleating variety. These domains can be
interpreted as generalisations of Ford domains~\cites[\S 19]{ford}[\S II.H]{maskit}, a class of fundamental domains which has been extensively studied in a number of different
contexts including in early work of Riley on hyperbolic knot theory~\cite{riley82}, the theory of punctured torus groups~\cite{akiyoshi,jorgensen03} and later punctured Klein
bottle groups~\cite{furokawa15}, compression body groups~\cite{lackenbypurcell13}, arithmetic Fuchsian groups~\cite{johansson00}, and discreteness testing for closed $3$-manifold
groups~\cite{manning02}. The appeal of Ford domains is due to their simplicity of definition, as well as their remarkable connections to other constructions in hyperbolic geometry, for instance being dual to
canonical triangulations~\cite[\S 4]{epstein88} which are themselves of extensive and continuing interest in hyperbolic knot theory. The advantage of our construction is that it
conforms to the shape of the peripheral structures of the groups being studied and so is more stable under deformation than the standard Ford domain. This conformality
comes from defining the domains in terms of hyperbolic geometry intrinsic to the groups; by comparison, the Euclidean geometry used to define Ford domains requires the
choice\ of a `good' conjugacy representative of $ G $ in order to get useful results. We give a rough formulation of the result here.

\begin{mainthm}[{\cref{thm:funddom}}]\label{mainthm1}
  Let $G$ be a geometrically finite torsion-free Kleinian group, and let $P \subset X(G) $ be the set of all geometrically finite Kleinian groups with the same convex core bending
  lamination\footnote{All laminations are purely topological unless stated otherwise and do not come equipped with measures.} as $G$. There exists an explicit family of polynomial
  inequalities on $ \C $, with coefficients varying $\R$-algebraically with the matrix entries of the generators of $ G $, such that for all $ \tilde{G} \in P $ the feasible region
  of the inequalities gives a fundamental domain for the action of $ \tilde{G} $ on its domain of discontinuity. In addition, there is a countable semi-algebraic subdivison of $ P $
  such that, within each region, the side-pairing maps for the domain \emph{also} vary algebraically.
\end{mainthm}

This theorem statement is rather inexplicit in the sense that we have not written it in a way which gives the inequalities on the nose. However, the proof gives a recipe for writing
down the inequalities in question in terms of any desired parameterisation of $ X(G) $. This means that if one is interested in working with any one particular family of groups then the inequalities
can be made explicit, the only real difficulty being computations of trace polynomials for the elements of $ G $ representing the convex core bending lamination, which is a hard problem to study abstractly (usually
specific features of the groups under study must be used to make the computations tractable). Geometrically, for each flat piece of the convex core of $ G $ one obtains a small family of inequalities
that encode a fundamental domain for the Fuchsian group uniformising that flat piece. Most of the technical difficulty comes in ensuring that everything depends algebraically on
the starting parameterisation of $ X(G) $ in a controlled way; the actual fundamental domains constructed are cut out by circles in a combinatorially straightforward way.

\begin{ex}\label{ex:punc_sph_1}
  As an example where our results can be applied relatively simply, consider a genus two Schottky group $ G $, i.e. a purely loxodromic
  Kleinian group which is free of rank $2$. Then $ \H^3/G $ has a single conformal end, a compact surface of genus $2$. If the bending lamination of $ \H^3/G $ at this end is a rational
  lamination $ \Lambda$, then there are exactly two Fuchsian groups involved $\Lambda$ has two complementary regions. Suppose we pinch two of the leaves of $\Lambda$ to parabolics, so we are
  in the setting of \cref{ex:rileychar} and the representation space has a single complex parameter $ \rho $. The semi-algebraic subdivision of $P$ mentioned in the theorem statement
  consists of only one region in the complex plane (the whole of $P$), and $P$ is cut out by two real polynomial inequalities on $\rho$: one trace inequality on
  the word of $G$ representing the non-parabolic leaf of $\Lambda$ and one inequality picking out the correct connected component. The proof of the theorem furnishes us with a semi-algebraic
  map from a well-behaved subset of $ \Hom(F_2, \PSL(2,\C)) $ to the set of $10$-tuples of circles (six circles for each Fuchsian group, with two shared between them) in the Riemann
  sphere so that one of the components of the common exterior of those circles is a fundamental domain for the action of each representation on the Riemann sphere, with side-pairings
  coming from the same words in every representation. In this special case the construction can be viewed morally as a generalisation of our earlier work with Martin and Schillewaert~\cite{ems21},
  but our results show that a similar construction works for all hyperbolic $3$-manifolds with exactly one conformal end where that end is a $k$-punctured sphere for $ k \geq 3 $;
  see \cref{ex:punc_sph_2}.
\end{ex}

In \cref{sec:twisting}, we extend this theory to give fundamental domains on thickenings of the strands. More precisely, we take the algebraic functions which
assign a fundamental domain to each group on a pleating ray and we extend them to algebraic functions on the whole parameter space which assign to groups `putative fundamental
domains' (i.e.\ formal conformal polygons with side-pairing relations). We then find a semi-algebraic subset $Y$ of $ X(G) $ within which these putative fundamental domains are actually
topological polygons without any degeneracies like overlapping sides, so $Y$ is a set on which this extended function actually produces fundamental domains:
\begin{mainthm}[{\cref{thm:mainthm}}]\label{mainthm2}
  Let $G$ be a geometrically finite torsion-free Kleinian group. Then there exists a full-dimensional semi-algebraic subset $ Y $ of the quasiconformal
  deformation space $ \QH(G) $ which contains the locus $P$ of \cref{mainthm1} and such that the assignment of fundamental domains to groups in $P$ extends to give algebraically varying
  fundamental domains for all groups in $ Y $.
\end{mainthm}
Again, although the statement we have given here is inexplicit, the proof follows an entirely explicit construction based on the combinatorics and geometry of the
flat pieces of the convex core of $ \H^3/G $. The idea is to take the unique extension of the $\R$-algebraic map of \cref{mainthm1} from $ P $ to $ Y $;
then the only problem is that the circles defining fundamental domains for the flat pieces of $G$ will now twist around since we
are leaving the locus where the corresponding groups are Fuchsian, and so we need to be a little clever to define the deformation of the pieces of fundamental domain
joining them together. The most technical part of the paper is \cref{cons:twisting}, where we deal with this problem by explicitly giving a deformation of these `joining pieces'
so that they also twist around algebraically with the parameterisation of the generators of $ G$.

One thing we obtain from \cref{mainthm2} is a decomposition of $ \QH(G) $ into semi-algebraic pieces so that the representations within each piece have combinatorially stable
fundamental domains. Constructions with similar goals have been carried out in the past: classically, constructions of decompositions of deformation spaces into pieces within
which groups have stable Ford domains were been carried out by J\o{}rgensen~\cite{jorgensen03} for once-punctured torus reresentations (see also Akiyoshi, Sakuma, Wada, and Yamashita~\cite{akiyoshi}),
and by Riley for four-punctured sphere representations (unpublished, see~\cite[Figure~0.2a]{akiyoshi}), and these constructions are essentially theories which give
hyperbolic triangulations of the corresponding manifolds. The modern descendent of this sort of construction is the model manifold theory originated by Minsky~\cite{minsky99,minsky10}
for the purpose of studying ending laminations, but in general the combinatorial decomposition is of a bi-Lipschitz `model' of the manifold, and a fundamental domain is not found
explicitly in $ \H^3$. Unlike these constructions, we are interested only in fundamental domains on the Riemann sphere, since this is enough to detect membership of the quasiconformal
deformation space. It is this simplification of the problem which makes it tractable.

\begin{rem}
  In \cref{cor:h3map} we describe a Voronoi construction which produces an $\H^3$-fundamental domain from our domains, but we do not have any control over the nature of this three-dimensional domain.
  We suspect that if one is willing to lose the conditions that everything is algebraic in the generators then it may be possible to give enough control over this Voronoi construction
  to make explicit its relation to the model manifold theory. Very roughly, the latter depends on decompositions according to short geodesics controlled by the complexes of curves of the surface
  ends of the manifold and in the large scale this is the same kind of data that we are detecting with our Voronoi construction. Our $\H^3$-domains can be viewed as analogous to the canonical
  triangulations constructed by Epstein and Penner~\cite{epstein88} for cofinite volume Kleinian groups, and these are known in special cases to be related to the model manifold theory---as one
  example, the Keen--Series theory of peripheral groups for the Riley slice has been explicitly connected to model manifold theory by Ohshika and Miyachi~\cite{ohshika10}. Further development in this direction seems interesting from
  the point of view of making the ending lamination theorem more practical to work with computationally, but we will say no more about this in the current paper.
\end{rem}

Apart from its intrinsic interest in giving information about the conformal action, \cref{mainthm2} has an application to the Geometric Recognition Problem: it produces a discreteness certificate for elements in $ Y $,
and hence exhibits the semi-algebraic set $Y$ as a full-dimensional well-behaved set of groups that lie in the same component of the discreteness locus as $G$. In fact, it follows from the constructions
themselves that there is a countable sequence of semi-algebraic sets (indexed by rational laminations on the conformal boundary of $ \H^3/G $) which exhaust the full component of the
discreteness locus containing $ G $. Hence in principle we obtain an semi-algorithm for certifying membership of representations $ G \to \PSL(2,\C) $ in $ \QH(G) $ and, as a consequence,
certifying discreteness of their images.

\subsection{Relation to the discreteness problem}
The problem of determining the discreteness of a group $ G \leq \PSL(2,\C) $ given in terms of matrix generators is an important and long-standing open problem in geometric
group theory. The Geometric Recognition Problem is slightly weaker than the full discreteness problem for $ \PSL(2,\C) $, because instead of asking for a certificate
of indiscreteness we ask for \emph{either} a certificate of indiscreteness, \emph{or} a certificate that the quotient $3$-manifold is incorrect. Putting it another way, we are
asking for a solution of the discreteness problem in the presence of some guess about the structure of $G$, and we are allowed to fail without deciding discreteness if we
verify that the guess about $G$ is incorrect. In practice the obstructions to solving the discreteness problem are the same as the obstructions to solving the Geometric Recognition Problem: essentially,
the existence of finitely generated groups without a finite-sided fundamental polyhedron.

The difficulty of the discreteness problem for $ \PSL(2,\C) $ is well-studied. As one example, Kapovich~\cite{kapovich16} has shown that discreteness in $ \PSL(2,\C) $ is undecidable in the BSS model of
computation, a model which includes oracles for computation over $ \R $ and which is sufficient to do computations for real semi-algebraic sets. In general, the state
of the art for $ \PSL(2,\C) $ seems quite far away from practical algorithms or semi-algorithms for discreteness testing: the only tests available are tests that can
certify discreteness or indiscreteness in special cases, or tests for discreteness together with some additional property. We refer the reader to recent survey
articles of Kapovich~\cite{kapovich23discreteness} and Gilman~\cite{gilman23} for further discussion and references.

While our methods are impractical in practice (for example, the semi-algebraic sets used to certify discreteness do not approximate the boundary well, and the complexity of the
trace polynomials involved makes computer implementation hard), they do provide some hope that there exist useful partial algorithms for the Geometric Recognition Problem. Such
algorithms may provide a useful stepping stone towards a fuller understanding of the general discreteness problem.

\subsection{Acknowledgments}
I thank Ari Markowitz for a detailed discussion of a false theorem and for general conversations about geometric group theory. I thank Gaven Martin, Jessica Purcell,
and Jeroen Schillewaert for helpful comments and discussion of various aspects of this work and surrounding mathematics.

\section{Representations of hyperbolic $3$-manifold groups}\label{sec:locglo}
We will freely identify $ \PSL(2,\C) $ with the group of orientation-preserving M\"obius transformations of the Riemann sphere $ \hat{\C} = \C \union \{\infty\} $, and with the group of
orientation-preserving isometries of hyperbolic $3$-space $ \H^3 $; the identifications are standard and correspond to the group action of $ \PSL(2,\C) $ on $ \hat{\C} $
given by
\begin{displaymath}
  \begin{bmatrix} a&b \\ c&d \end{bmatrix} \cdot z = \frac{az + b}{cz+d}
\end{displaymath}
and the induced action on upper-halfspace obtained from this conformal action by Poincar\'e extension. Let $ G $ be a geometrically finite, torsion-free, non-elementary,
discrete subgroup of $ \PSL(2,\C) $. These conditions imply that $ \H^3/G $ is a complete hyperbolic $3$-manifold with finite volume convex core. Conversely given any such manifold $ M $
development onto $ \H^3 $ induces a canonical representation $ \pi_1(M) \to \PSL(2,\C) $ (up to conjugacy) called the holonomy representation, and the image of this
is a discrete, geometrically finite, torsion-free, and non-elementary group. Working with parameterised generating sets as in \cref{ex:rileychar} and \cref{ex:12cbchar} picks out
a specific choice of representative for each conjugacy class, i.e.\ a section of a subset of the character variety in $ \Hom(G, \PSL(2,\C)) $. These sections can be equivalently
thought of as a family of subgroups of $ \PSL(2,\C) $ with a consistently marked generating set.

\begin{defn}
  A discrete subgroup of $ \PSL(2,\C) $ is called a \df{Kleinian group}~\cite{maskit,matsuzakitaniguchi}. A geometrically finite, torsion-free, non-elementary Kleinian group
  is called a \df{convex cofinite manifold group}. Given a Kleinian group $ G $, the maximal open subset of $ \hat{\C} $ on which $ G $ acts discontinuously will be
  denoted $ \Omega(G) $; so $ \Omega(G)/G $ is a (possibly empty, possibly disconnected, possibly singular) Riemann surface that is canonically identified with $ \partial \H^3/G $,
  the conformal boundary of $ \H^3/G $. The complement of $ \Omega(G) $ in $ \hat{\C} $ is the set of all accumulation points of $G$-orbits in $ \overline{\H^3} = \H^3 \union \hat{\C} $
  and is called the \df{limit set} of $ G $, denoted $ \Lambda(G) $.
\end{defn}

In this paper, we will essentially only be interested in Kleinian groups $G$ with $ \Omega(G) \neq \emptyset $; these are sometimes called \df{Kleinian groups of the second kind}.

Given an hyperbolic $3$-manifold $ M $ with $ \partial M \neq \emptyset $, it is possible to slightly deform the hyperbolic metric and obtain a new $3$-manifold $ \tilde{M} $. This corresponds to taking a
small deformation of the holonomy group $ G = \pi_1(M) $ and producing some isomorphic group $ \tilde{G} < \PSL(2,\C) $. The degrees of freedom of possible deformations are classified by
the following result of Marden~\cite{marden74g} (in the torsion-free setting) and Tukia~\cite{tukia85}; this version also incorporates
the earlier work of Ahlfors, Bers, Maskit, and others on the Teichm\"uller spaces of Kleinian groups.
\begin{thm}[{\cite[Theorems~3.25, 5.27, 5.28, and 5.32]{matsuzakitaniguchi}}]\label{thm:mardentukia}
  Let $ M = \H^3/G $ be a geometrically finite hyperbolic $3$-manifold and let $ \abs{M} $ be the underlying topological manifold. Decompose $ \partial M $ as a union of
  connected components $ \bigcup_{i=1}^n S_i $. The set of hyperbolic metrics on $ \abs{M} $ is parameterised by the product of the Teichm\"uller spaces $ \prod_{i=1}^n T(S_i) $;
  every choice of metric $ \tilde{M} $ is realised by some holonomy group $ \tilde{G} $, and the set of all possible holonomy groups forms a connected open subset of the character
  variety $ X(G) = \Hom(G,\PSL(2,\C))\git\PSL(2,\C) $. \qed
\end{thm}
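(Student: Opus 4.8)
The plan is to assemble the statement from three classical ingredients: the measurable Riemann mapping theorem (Ahlfors--Bers), the rigidity theorems of Ahlfors and Sullivan, and the stability theorem of Marden, with Tukia's work fixing the global picture. First I would realise deformations of $G$ analytically. Given a measurable $G$-invariant Beltrami differential $\mu$ on $\hat{\C}$ with $\norm{\mu}_\infty<1$, the measurable Riemann mapping theorem furnishes a quasiconformal homeomorphism $w^\mu$ of $\hat{\C}$, normalised to fix $0,1,\infty$ and solving $\partial_{\bar z}w^\mu = \mu\,\partial_z w^\mu$. Because $\mu$ is $G$-invariant, the conjugate $G^\mu := w^\mu G (w^\mu)^{-1}$ is again Kleinian and $g \mapsto w^\mu g (w^\mu)^{-1}$ is an isomorphism $G \to G^\mu$; moreover $w^\mu$ extends to a homeomorphism of $\overline{\H^3}$ conjugating the two actions, so $\H^3/G^\mu$ is homeomorphic to $M$. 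This is the mechanism that transports a change of conformal structure on the boundary to a change of holonomy.

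\emph{Parameterisation and realisation.} Next I would show that all such deformations are detected on the ordinary set and are governed by the boundary Teichm\"uller spaces. By Ahlfors' finiteness theorem, $\Omega(G)/G$ is a finite disjoint union of finite-type Riemann surfaces which we identify with $\partial M = \bigcup_{i=1}^n S_i$. By Sullivan's rigidity theorem the limit set $\Lambda(G)$ carries no nontrivial $G$-invariant line field, so every $\mu$ may be taken supported on $\Omega(G)$ without changing $G^\mu$ up to conformal conjugacy. The space of such $\mu$, modulo the trivial deformations, is by the Ahlfors--Bers uniformisation exactly the Teichm\"uller space of $\Omega(G)/G$, which factors over connected components as $\prod_{i=1}^n T(S_i)$. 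Since a geometrically finite hyperbolic structure on $\abs{M}$ is determined by its conformal structure at infinity, this both yields the claimed parameterisation and shows that each point of $\prod_{i=1}^n T(S_i)$ is realised by the holonomy of some $G^\mu$.

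\emph{Passing to the character variety.} Finally I would push this forward along the holonomy map $[\mu]\mapsto[\rho^\mu]$, where $\rho^\mu(g)=w^\mu g (w^\mu)^{-1}$, into $X(G)$. Connectedness is then immediate, since each $T(S_i)$ is a cell and the holonomy map is continuous, so the image is connected. The hard part will be \emph{openness}: this is Marden's isomorphism theorem, which asserts that for geometrically finite $G$ the holonomy map is a local homeomorphism onto an open set. Its proof is the real obstacle and rests on the structural stability of geometrically finite groups---every sufficiently small perturbation of the representation stays discrete, remains geometrically finite, and is quasiconformally conjugate to $G$---which one establishes by showing that a finite-sided fundamental polyhedron persists under small perturbations of the generators and then invoking Bers' simultaneous uniformisation. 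Tukia's theorem completes the argument by identifying this open image with precisely the component of the interior of the discreteness locus that contains $[G]$, which is the assertion of the theorem.
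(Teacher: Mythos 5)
The paper does not prove this theorem at all: it is stated with a literature citation and an immediate end-of-proof mark, imported wholesale from Matsuzaki--Taniguchi (which in turn packages the work of Marden, Tukia, Ahlfors, Bers, Maskit, and Sullivan), so there is no in-paper argument to compare against. Your outline is a faithful reconstruction of exactly that standard argument---the measurable Riemann mapping theorem for realising deformations, Ahlfors finiteness plus Sullivan rigidity for the parameterisation by $\prod_{i=1}^n T(S_i)$, Marden's isomorphism/stability theorems for openness, and Tukia's theorem for identifying the image with the full component---which is precisely the proof the paper is implicitly relying on via its citation.
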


The connected open subset which parameterises the hyperbolic metrics of $ \abs{M} $ is called the \df{quasiconformal deformation space} of $ G $, and is denoted $ \QH(G) $~\cite[\S 5.3]{matsuzakitaniguchi}. It is
equivalently defined as the set of conjugacy classes of representations $ \rho : G \to \PSL(2,\C) $ which satisfy three axioms:
\begin{enumerate}
  \item $\rho$ is faithful and $ \rho(G) $ is discrete;
  \item $\rho(g) $ is parabolic whenever $ g \in G $ is parabolic; and
  \item there exists a quasiconformal map $ \phi : \hat{\C} \to \hat{\C} $ such that $ \phi G\phi^{-1} = \rho(G) $.
\end{enumerate}
Note that (3) implies a strengthened version of (2), that $ \rho(g) $ is parabolic if and only if $ g $ is parabolic.

Algebraically, for each connected component $ S_i $ of $ \partial M $ we have a canonical map $ \pi_1(S_i) \to G $ given by inclusion of sets. This map is not faithful in general; there
might be compression discs in $ M $ with boundary on $ S_i $ which kill elements, and it could be that the surface $ S_i $ is  knotted nontrivially in $ M $ introducing further relations.
We write $ \Sigma_i $ for the image in $ G $ of this map. In general there is no simple algebraic decomposition of $ G $ in terms of the subgroups $ \Sigma_i $, but in special cases
one can be explicit enough about their relationship to find such a thing.

\begin{ex}
  In the special case that $ M $ is a compression body, there is a distinguished conformal end $ S_1 $ and $ \pi_1(M) = G $ is a quotient of $ \Sigma_1 $. There is a decomposition
  of $ G $ into a sequence of amalgamated products of the groups $ \pi_1(S_i) $ for $ i > 1 $, arising from cutting $ M $ along compression discs. This procedure arose first algebraically
  in Maskit's classification of function groups~\cite[Chapter~X]{maskit}.
\end{ex}

\begin{defn}\label{defn:rat_lam}
  A \df{rational lamination} on $ \Omega(G)/G $ is a choice of rational lamination (i.e.\ lamination with compact support) $ \Lambda_i $ on each surface $ S_i $ such that:
  \begin{enumerate}
    \item $ \Lambda_i $ is complete with respect to rank $1$ cusps, i.e.\ if $ S_i $ has a rank $1$ cusp then consider the deleted point as a loop on the surface with length $0$ and include
          it in $ \Lambda_i $.
    \item No leaves in $ \Lambda = \bigcup_{i=1}^n \Lambda_i $ are isotopic to other leaves, bound a compression disc, or bound a punctured disc in $ \H^3/G $. This is equivalent to asking
          that the leaves correspond via the map $ \pi_1(S_i) \to G $ to distinct nontrivial elements of $ G $ such that the image is parabolic if and only if the loop corresponds to a rank
          $1$ cusp on $ S_i $ itself as in (1).
  \end{enumerate}
\end{defn}

\begin{rem}
  An important special case is when every $ \Lambda_i $ is maximal on the surface $ S_i $, i.e.\ the complement $ S_i \setminus \Lambda_i $ is a union of thrice-holed spheres. The algebraic
  consequences of this setup for $\PSL(2,\C)$-character varieties of surface groups have been studied by Kabaya~\cite{kabaya11}.
\end{rem}

We will now explain how the fundamental groups of the conformal surfaces at the boundary of $ \H^3/G $ can be constructed from the much simpler fundamental
groups of the components of $ \bigcup (S_i \setminus \Lambda_i) $. This construction will be key in our later construction of conformal fundamental domains, since
our method is to patch together fundamental domains of the simpler groups to form a domain for the action of $ G $ on $\hat{\C} $.

\begin{cons}\label{cons:stratification_of_surfaces}
  For each $ i $, let $ \{P_{i,j}\}_{j=1}^{m_i} $ be the components of $ S_i \setminus \Lambda_i $. There exists a decomposition of $ \pi_1(S_i) $ (recall, we distinguish between $ \pi_1(S_i) $ and
  the image $ \Sigma_i $ of the surface group in $ G $) using combination theorems.

  Let $ \Gamma(\Lambda_i) $ denote the dual graph to the lamination $ \Lambda_i $, so the vertex set of $ \Gamma(\Lambda_i) $ is $  \{P_{i,j}\}_{j=1}^{m_i}  $ and the edge set is $ \Lambda_i $
  with incidence defined in the obvious way. Fix a maximal tree $ T_i $ for $\Gamma(\Lambda_i)$, topologically embedded on $ S_i $. For each $ P_{i,j} $, there are many possible conjugacy class
  representatives in $ \Sigma_i $ of $ \im (\pi_1(P_{i,j}) \to \pi_1(S_i) \to \Sigma_i) $, and we will simultaneously pick a representative for each in a way which is compatible with $ T $.
  We phrase the compatibility condition as a condition on the graph neighbourhood of a single $ P_{i,j} $, but we require it to be satisfied for all $ i $ and $ j $ so it is in reality a global condition. For
  every non-loop edge $ e $ adjacent to $ P = P_{i,j} $, if $ P^e $ is the vertex adjacent to $ P $ along $ e $ and if $\lambda$ is the leaf of $ \Lambda_i $ dual to $ e $:
  \begin{enumerate}
    \item If $ e $ lies in the distinguished tree $ T $, then we require the primitive elements of $ \im (\pi_1(P) \to \pi_1(S_i)) $ and $ \im (\pi_1(P^e) \to \pi_1(S_i)) $ which represent
          the loop $ \lambda $ to be equal.
    \item If $ e $ does not lie in the distinguished tree, then let $ \gamma_e $ be a loop in $ \pi_1(S_i) $ which represents the simple closed curve $ \tau \union e $ where $ \tau $ is the
          path in $ T $ from $ P $ to $ P^e $. We require the elements of  $ \im (\pi_1(P) \to \pi_1(S_i)) $ and $ \im (\pi_1(P^e) \to \pi_1(S_i)) $ representing the loop $\lambda$
          to be conjugate by $ \gamma_e $.
  \end{enumerate}
  For a loop edge, i.e.\ an edge $e$ joining some vertex $ P_i $ to itself, we define $ \gamma_e $ to be the word corresponding to a path on the surface $ P_i $ joining the two boundary
  components of $P_i$ obtained from the cut along the leaf on $S_i$ that is dual to $e$.

  A proof that there exist simultaneous compatible choices $ \Pi_{i,j} $ for conjugacy representatives of $ \im (\pi_1(P_{i,j}) \to \pi_1(S_i)) $ is a minor
  generalisation of results in Kabaya~\cite[\S3]{kabaya11}. Having made these choices, each surface group $ \Sigma_i $ is obtained from the  $ \pi_1(P_{i,j}) $ by amalgamated
  products (along the elements represented by edges in $ T $), HNN extentions (where the stable elements are the transformations $ \gamma_e $), and then a projection
  from $ \pi_1(S_i) $ to $ \Sigma_i $. It is not quite true that $\Sigma_i$ is obtained from the $ \Pi_{i,j} $ directly by amalgamated products and HNN extensions since
  the projection from $ \pi_1(S_i) $ to $ \Sigma_i $ (i.e.\ embedding the surfaces in $ G $) can trivialise the stable elements of the HNN extensions.
\end{cons}

The construction gives us a decomposition of $ G $ into local pieces represented by the diagram
\begin{displaymath}
  \begin{tikzcd}[row sep=1em]
    \Pi_{i,j} \arrow[r, hook] & \Sigma_i \arrow[r, hook] & G=\Pi_1(M)\\
    & \pi_1(S_i) \arrow[ur] \arrow[u,two heads]\\
    \pi_1(P_{i,j}) \arrow[ur] \arrow[uu, two heads]
  \end{tikzcd}
\end{displaymath}
where $ i $ ranges from $ 1 $ to $ n $ and where for each $ i $, $ j $ ranges from $ 1 $ to $ m_i $. From left to right
the algebraic maps along the diagonal require more and more global information to write down explicitly. Our results and constructions are primarily
concerned with the action of the group $ G $ on the Riemann sphere and not with the `global' action on $ \H^3 $, and so most of our interest
concentrates on the groups $ \Pi_{i,j} $ and their constituents since these are the things detected by fundamental domains of $ G $ on $ \hat{\C}$.
A great deal of literature does exist on the global algebraic decompositions of $ G $, see the monograph of
Aschenbrenner, Friedl, and Winton~\cite{aschenbrenner}.

\begin{ex}\label{ex:schottky_85}
  We give an explicit example where $ G $ is a maximal cusp group on the boundary of genus $2$ Schottky space. We first construct the group inside
  the character variety of $ F_2 $. The space of free groups in $ \PSL(2,\C) $ on two generators $ X $ and $ Y $ is a $3$-dimensional complex variety
  and may be parameterised by $ \tr X $, $ \tr Y $, and $ \tr XY $. We take the parameterisation
  \begin{displaymath}
    X = \begin{bmatrix} \frac{1}{2}(t_X + i t_{XY}) & -\frac{1}{2} (t_X + v) \\ -\frac{1}{2} (t_X - v) & \frac{1}{2}(t_X - i t_{XY}) \end{bmatrix}\;\text{and}\;
    Y = \begin{bmatrix} \frac{t_Y}{2} - i & \frac{t_Y}{2} \\ \frac{t_Y}{2} & \frac{t_Y}{2} + i \end{bmatrix}
  \end{displaymath}
  where $ v $ satisfies $ v^2 = 4 - t_{XY}^2 $; this is chosen so that $ \tr X = t_X $, $ \tr Y = t_Y $, and $ \tr XY = t_{XY} $. Algebraically, the parameterisation is
  a map $ \Phi : \C^4 \to \PSL(2,\C)^3 $, defined over $ \Q $ and with the coordinates of $ \C^4 $ labelled $ t_X, t_Y, t_{XY}, v $, restricted to the
  subvariety $ \V(v^2 + t_{XY}^2 - 4) \subset \C^4 $.

  \begin{figure}
    \labellist
    \small\hair 2pt
    \pinlabel {$U_2$} [l] at 85 27
    \pinlabel {$U_3$} [l] at 173 59
    \pinlabel {$U_1$} [r] at 168 85
    \endlabellist
    \centering
    \includegraphics[width=0.6\textwidth]{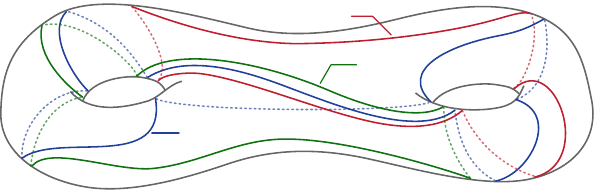}
    \caption{The curves on the genus $2$ surface represented by the three words of \cref{ex:schottky_85}. The full surface is $ S_1 $, and when it is cut along the maximal lamination
    it falls into two thrice-holed spheres, $ P_{1,1} $ and $ P_{1,2} $. The graph $ \Gamma(\Lambda) $ is a $\theta$-curve, and the maximal tree $ T_1 $ can be chosen to consist
    of a vertex interior to each thrice-holed sphere joined by a single edge intersecting the $U_2$ curve once.\label{fig:eightfive_schottky}}
  \end{figure}

  If $ F_2 = \langle X, Y \rangle $ is identified with the genus $2$ handlebody group such that $ X $ and $ Y $ are the cores of the two handles, then the three words
  \begin{displaymath}
    U_1 = X^{-1}Y^2,\,
    U_2 = X^{-1}Y^3X^{-2},\,\text{and}\,
    U_3 = YX^{-2}
  \end{displaymath}
  represent simple closed curves on the boundary of the manifold. These curves form the three leaves of a maximal rational lamination $ \Lambda $ on the single surface end
  of the handlebody, shown in \cref{fig:eightfive_schottky}. The motivation behind this particular choice of words is explained in~\cite[Example~3.7]{elzenaar25h} but is not material to our current discussion.

  It is not hard to find a representation of $ F_2 $ such that the three words $ \{U_i\}_{i=1}^3 $ are parabolic: the isolated solution to the system of
  equations $ \tr^2 U_1 = \tr^2 U_2 = \tr^2 U_3 = 4 $ approximated by
  \begin{displaymath}
    (t_X, t_Y, t_{XY}) = (0.7607 + 0.8579i, -0.7610 - 0.8579i, 2.3146 - 2.6103i)
  \end{displaymath}
  gives a representation with discrete image $ G $ so that $ \H^3/G $ is a handlebody with boundary two thrice-punctured spheres, $ P_{1,1} $ and $ P_{1,2} $. In our notation the group has a single
  conformal surface $ S_1 $ (of genus $2$, corresponding to plumbing\footnote{Given a rank $1$ cusp in a geometrically finite group joining two surfaces,
  we may `plumb' the surfaces across the cusp by cutting out a punctured disc neighbourhood of the cusp from both and stitching together the resulting boundary curves~\cite{kra90,maloni10}.}
  the two surfaces $ P_{1,i} $ together), and the end group $ \Sigma_1 $ which is the image of $ \pi_1(S_1) $ in $ G $ is
  the entire group $ G $. Let $ (-)^\dagger $ denote reversal of a word, and consider the two subgroups $ \Pi_{1,1} = \langle U_1, U_2 \rangle $ and $ \Pi_{1,2} = \langle U_1^\dagger, U_2^\dagger \rangle $.
  It is not too hard to see, e.g.\ by curve coding techniques, that these represent images of $ \pi_1(S_{1,1}) $ and $ \pi_1(S_{1,2}) $ inside $ G $. If we conjugate $ \Pi_{1,2} $ by $ X^{-1}Y $
  and call the result $ \Pi_{1,2}' $, then $ \Pi_{1,2}' \inter \Pi_{1,1} = \langle U_2 \rangle $ and the full group $ G $ is obtained by two HNN extensions of the amalgamated
  product of $ \Pi_{1,2}' *_{\langle U_2 \rangle} \Pi_{1,1} $ followed by killing the compression disc boundary curves in the genus $2$ handlebody.
\end{ex}

\section{Peripheral groups and the convex core boundary}\label{sec:peripheral}
In the previous section we studied algebraic decompositions of a convex cofinite manifold group $G $ arising from topological decorations on the conformal boundary of $ \H^3/G $.
These decompositions are useful in studying the algebraic structure of the character variety, but since they are purely topological they do not give much information on the structure
of subsets of the character variety of interest to geometric topologists, most notably the discreteness locus. In this section we introduce machinery to isolate loci within the
character variety within which there are algebraic decompositions compatible with the geometry.

\begin{defn}
  A subgroup $ \Pi \leq G $ is called \df{peripheral} if there exists a non-empty open simply connected subset $ U \subset \Omega(G) $ left invariant by $ \Pi $.
  Since $ \Lambda(\Pi) \subset \Lambda(G) $, $ U $ is necessarily a subset of $ \Omega(\Pi) $. We say that $ \Pi $ is \df{F-peripheral}
  if it is Fuchsian and $ U $ is one of the discs on which it acts. We write $ \Delta(\Pi) $ for the maximal topological disc $U$ preserved by $\Pi$;
  it is called the \df{peripheral disc} of $\Pi$. The set of maximal\footnote{If `maximal' is deleted we still obtain a graph, but it is much larger as
  every subgroup of an $F$-peripheral group is itself $F$-peripheral.} $ F$-peripheral subgroups of $ G $ forms the vertices of a natural graph $ \FPer(G) $, where the incidence
  relation is inclusion (two subgroups lie in a face if they intersect nontrivially). The overgroup $ G $ acts on this graph by conjugation.
\end{defn}

\begin{history}
  This definition was introduced to the literature by Keen and Series~\cite{keen94} as a technical tool to deal with the combinatorial circle-packing
  structure of limit sets (an example of such a circle-packing is shown in \cref{fig:schottky_85} below) which they had previously used to study a deformation
  space of punctured torus groups~\cite{keen93}. They attributed the insight that the circle patterns in limit sets (which they called circle chains,
  see \cref{defn:chain} below) can be controlled via less ad-hoc algebraic methods to David Wright. The notion of a peripheral subgroup of a hyperbolic group
  $G$ as a subgroup whose Cayley graph is an `extremal' subset of the ends of the Cayley graph of $ G $ originated with Gromov and
  is of interest in geometric group theory~\cite{bowditch01,haissinsky16,benzvi22}.
\end{history}

\begin{ex}\label{ex:apollonian}
  If $G$ is the group generated by the two parabolics $ X(z) = z+1 $ and $ Y(z) = z/(2iz+1) $, then $ \H^3/G $ is topologically a genus $2$ handlebody.
  The commutator $ [X,Y] $ is parabolic, and the three words $ X $, $ Y $, and $ [X,Y] $ represent three rank $1$ cusps and three disjoint simple closed curves
  on the topological genus $2$ surface obtained by plumbing the boundary of $ \H^3/G $.

  There are two conjugacy classes of F-peripheral groups in $ G $. They are represented by the two $ (\infty,\infty,\infty)$-triangle
  groups $ \langle X^{-1}, Y^{-1}XY \rangle $ and $ \langle X^{-1}Y^{-1}X, Y \rangle $. One possible fundamental domain for the action of $ G $ on $ \FPer(G) $ is
  \begin{displaymath}
    \begin{tikzcd}[column sep=huge, row sep=small]
      \langle X, YX^{-1}Y^{-1} \rangle \arrow[d,dash,"\langle X \rangle"]\\
      \langle X^{-1}, Y^{-1}XY \rangle \arrow[r, dash, "\langle X^{-1}Y^{-1}XY \rangle"] & \langle X^{-1}Y^{-1}X, Y\rangle \arrow[d, dash, "\langle Y \rangle"]\\
      &\langle Y^{-1}, XYX^{-1} \rangle.
    \end{tikzcd}
  \end{displaymath}
  Since $Y$ acts to identify the two vertices on the left and $ X $ acts to identify the two vertices
  on the right, the quotient $ \FPer(G)/G $ is the graph
  \begin{displaymath}
    \begin{tikzcd}[column sep=huge]
      \langle X^{-1}, Y^{-1}XY \rangle \arrow[loop left,dash,"\langle X \rangle"]\arrow[r, dash, "\langle X^{-1}Y^{-1}XY \rangle"] & \langle X^{-1}Y^{-1}X, Y\rangle \arrow[loop right, dash, "\langle Y \rangle"]
    \end{tikzcd}
  \end{displaymath}
  which can be identified with the dual graph to the maximal rational lamination defined by $ X $, $ Y $, and $ [X,Y] $.
\end{ex}
\begin{rem}
  In this remark we use terminology from Dicks and Dunwoody~\cite[\S\S I.1--I.2]{dicks} (but with obvious modifications to account for our graphs being undirected).
  We will not use any technical machinery from the theory of groups acting on graphs in this paper, but readers already familiar with this
  field may find it useful to note that our `fundamental domains' (as in \cref{ex:apollonian}) are obtained by taking a $G$-traversal for the action
  of $ G $ on $ \FPer(G) $ and then adjoining to every `hanging' edge its other vertex; i.e.\ the graph closure of the $G$-traversal. We will usually
  (in light of \cref{cons:stratification_of_surfaces}) take all $G$-traversals to be \emph{fundamental}, i.e.\ the graph interior of the traversal
  is a subforest of $ \FPer(G) $ such that every component of the traversal lies in a distinct connected component of $ \FPer(G) $; this is
  possible by~\cite[Proposition I.2.6]{dicks}.
\end{rem}

The peripheral discs of the conjugates of an F-peripheral group form a series of round open discs in $ \Omega(G) $, and the domes above
these open discs support some of the flat pieces of the pleated hyperbolic surface $ \hconv \Lambda(G) $ (the hyperbolic convex hull of $ \Lambda(G) $~\cite[\S 3.1.1]{matsuzakitaniguchi}).

\begin{rem}
  The relationship of the convex core bending lamination to the structure of the quasiconformal deformation space is well established. Choi and Series have proved, using
  the Hodgson--Kerchkoff theory of cone manifold deformations, that normalised lengths of leaves of the bending lamination give a coordinate system on the space of convex
  structures of hyperbolic $3$-manifolds~\cite{choi06}. A related conjecture usually attributed to Thurston is that the holonomy representation of a hyperbolic manifold
  is determined exactly by the angles across of the bending lamination of its convex core boundary. In certain special cases, bounds on the relationship between pleating
  angles and pleating lengths are known, for instance see Miyachi~\cite[Lemma~7.1]{miyachi03}. Bonahon and Otal have proved~\cite{bonahonotal04} the existence of quasi-Fuchsian
  groups that realise every possible bending lamination on their two ends, and proved uniqueness of these groups in some cases; an alternative existence proof was given
  by Baba and Ohshika using model manifold theory~\cite{baba23}. Proofs of Thurston's conjecture have been announced for convex cocompact groups by Dular and Schlenker in 2024~\cite{dular24}
  and for singly degenerate manifolds on the boundary of quasi-Fuchsian space by Dular~\cite{dular25}. It remains open for arbitrary Kleinian groups. Of additional interest to
  us is work of Series~\cite{series06} who proved, using the theory of peripheral structures and their limits, both existence and uniqueness of groups realising every measured
  lamination for the special case of representations of $ \pi_1(S_{0,4}) $ and $ \pi_1(S_{1,1}) $.
\end{rem}

\begin{defn}\label{defn:chain}
  Let $ \Lambda = \bigcup_{i=1}^n \Lambda_i $ be a rational lamination of $ \Omega(G)/G $ as described in \cref{defn:rat_lam}; let $ T $ be a maximal forest of $ \Gamma(\Lambda) $
  and use it to define a family of subgroups $ \Pi_{i,j} $ of $ G $ via the procedure described in \cref{cons:stratification_of_surfaces}.
  If the subgroups $ \Pi_{i,j} $ are all maximal F-peripheral subgroups, then we say that they form a \df{$\Lambda$-circle chain} in $G$. Further,
  if $ \tilde{G} $ is the image of a representation $ \rho : G \to \PSL(2,\C) $ then we say that $ \tilde{G} $ has a $ \Lambda$-circle chain
  if the images $ \rho(\Pi_{i,j}) $ are maximal F-peripheral and have the same incidence structure as the groups $ \Pi_{i,j} $.
\end{defn}

\begin{ex}\label{ex:riley}
  Keen and Series studied the case that the group $ G $ is free on two parabolic generators and $ \Omega(G)/G $ is a four-punctured sphere (i.e.\ lies in the Riley slice, the locus of
  free quasiconformally deformable groups in the parameter space of \cref{ex:rileychar})~\cite{keen94}, and the case that the group $ G $ is freely generated by a parabolic element
  and a loxodromic element which have a parabolic commutator (i.e.\ lies in the Maskit slice)~\cite{keen93}. \Cref{ex:apollonian} above gives an example of
  a circle chain in a group on the boundary of the Riley slice.
\end{ex}

\begin{figure}
  \labellist
  \hair 2pt
  \pinlabel {$\Pi_{1,2}$} at 220 440
  \pinlabel {$\Pi'_{1,2}$} at 152 241
  \pinlabel {$\Pi_{1,1}$} at 291 166
  \endlabellist
  \centering
  \includegraphics[width=.6\textwidth]{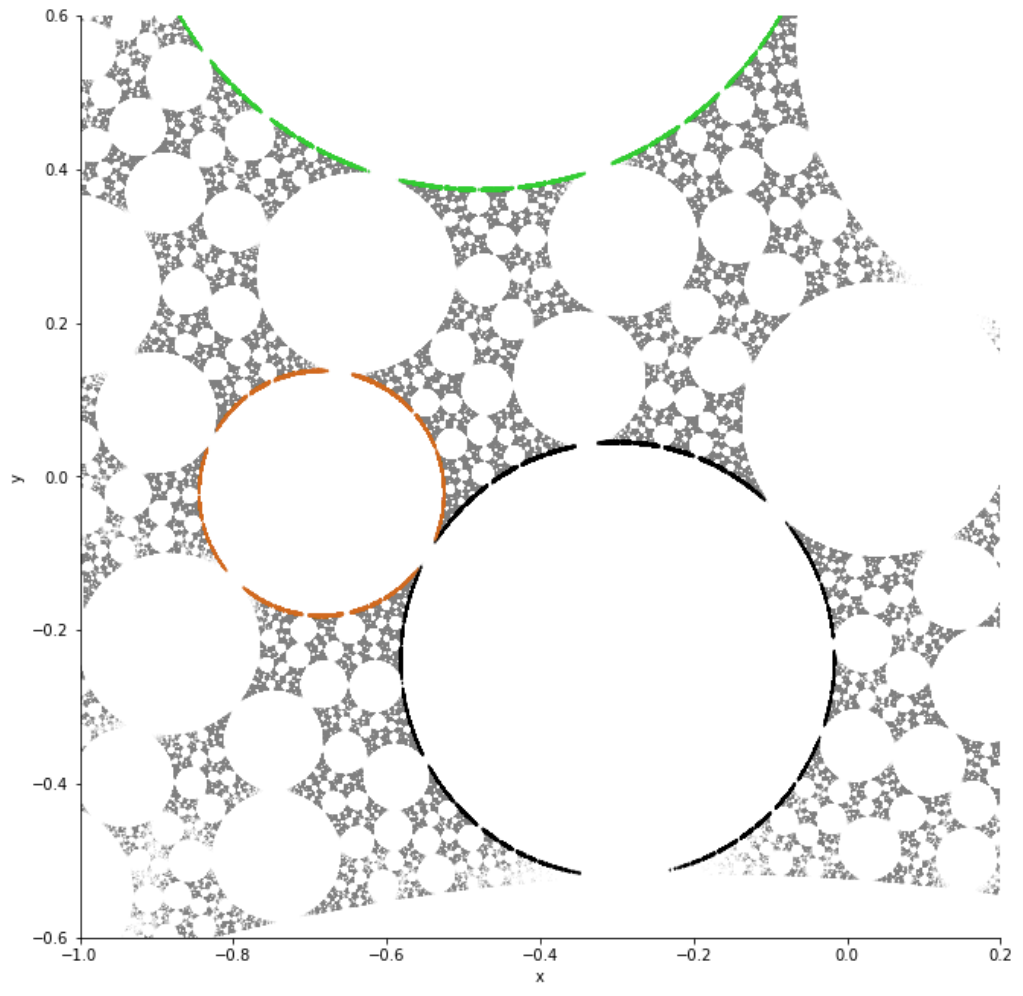}
  \caption{The peripheral discs for the subgroups $ \Pi_{1,1} $, $ \Pi_{1,2} $, and $ \Pi_{1,2}' $ of $ G $ defined in \cref{ex:schottky_85}. The graph $ \FPer(G) $ can be identified with the tangency graph of the circle packing $ \hat{\C} \setminus \Lambda(G) $.\label{fig:schottky_85}}
\end{figure}

\begin{ex}
  The groups $ \Pi_{1,1} = \langle U_1, U_2 \rangle $ and $ \Pi_{1,2} = \langle U_1^\dagger, U_2^\dagger \rangle $ of \cref{ex:schottky_85} are $F$-peripheral and lie in different
  conjugacy classes in the group $ G $ of that example. They do not intersect, so do not form a circle chain (since by definition a circle chain comes from a connected subgraph of $ \FPer(G) $
  for each surface); the two groups $ \Pi_{1,1} $ and $ \Pi_{1,2}' $ do form a circle chain. The limit sets of all three F-peripheral groups are shown superimposed on the limit set of $ G $
  in \cref{fig:schottky_85}.
\end{ex}

\begin{figure}
  \centering
  \includegraphics[width=.6\textwidth]{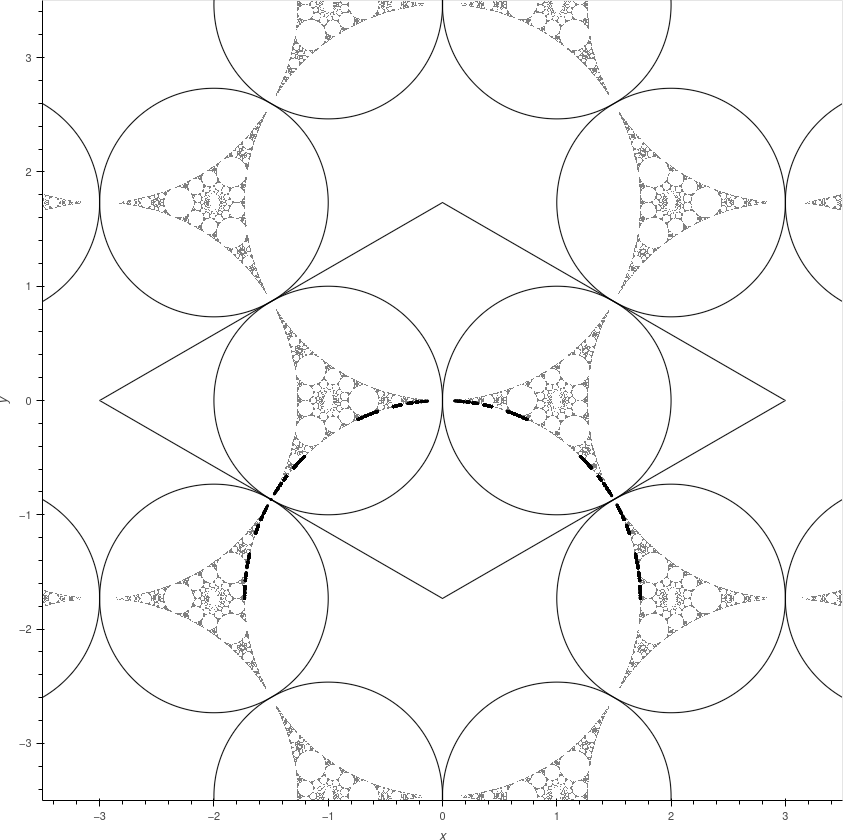}
  \caption{In grey, we show the limit set of a $ (1;2)$-compression body cusp group $G$. The subgroup $\Pi$ with highlighted limit set is F-peripheral, but not maximal.\label{fig:circles_pi3}}
\end{figure}

\begin{ex}\label{ex:non_maximal}
  We give an example where the lamination $ \Lambda $ is not maximal. Consider the group $G$ defined by choosing $ \alpha = 3+i\sqrt{3} $, $ \beta = 3-i\sqrt{3} $, and $ \lambda = 1 $
  in the parameterisation of \cref{ex:12cbchar}. This group is the $ \theta = \pi/3 $ circle pattern group studied in our earlier work~\cite[Example~2.1]{elzenaar25h}, and lies on the
  boundary of $ (1;2)$-compression body space. The subgroup $ \Pi = \langle MP^{-1} MQ^{-1}, M \rangle $ is F-peripheral, but is not maximal. This can be seen in \cref{fig:circles_pi3},
  where the limit set $\Lambda(\Pi) $ covers only part of the boundary of a peripheral disc of $G$; in the language of Keen and Series~\cite{keen94}, it is not \textit{strongly} F-peripheral.
  The maximal F-peripheral group which contains $ \Pi $ is $ \Pi' = \langle \Pi, PQ^{-1} M QP^{-1} \rangle $; the limit set $ \Lambda(\Pi') $ consists of every limit point of $G$ that
  lies on the boundary of the disc preserved by $ \Pi $. This maximal F-peripheral group is a Fuchsian four-punctured sphere group: the corresponding lamination has only
  two leaves on the topological genus $2$ surface.
\end{ex}

The goal of the remainder of this section is to show that the quotient of $ \FPer(G) $ by the conjugation action is equal to the graph $ \Gamma(\Lambda) $, when there is
a $ \Lambda$-circle chain; in fact a $ \Lambda$-circle chain is identified with a connected fundamental set for the action of $ G $ on vertices. An alternative way of putting
this is that if $ \Lambda $ is a lamination which is compatible with the geometry of the group, then there is a well-defined covering map $ \FPer(G) \to \Gamma(\Lambda) $.
This is a consequence of the following proposition, which states that knowing a $ \Lambda$-circle chain in $ G $ is equivalent to knowing the entire combinatorial convex core
angle structure.

\begin{prp}\label{prp:circle_chain_encodes_membership}
  Let $ G $ be a convex cofinite manifold group with $ \Omega(G) \neq \emptyset $ and let $ \Lambda $ be the convex core bending lamination of $ \Omega(G)/G $. Suppose that $ \rho : G \to \PSL(2,\C) $
  is discrete, non-elementary, and admits a $\Lambda$-circle chain. If $ \H^3/\rho(G) $ is homeomorphic to $\H^3/G $, then $ \rho(G) \in \QH(G) $.
\end{prp}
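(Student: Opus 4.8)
\section*{Proof proposal}

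The plan is to verify the three axioms defining $ \QH(G) $ for the representation $ \rho $: discreteness, which is hypothesised; faithfulness together with type-preservation; and the existence of a global quasiconformal conjugacy. The last is the only substantive axiom, and I would obtain it by reducing to Marden's isomorphism theorem: a type-preserving isomorphism between geometrically finite Kleinian groups whose quotient manifolds are homeomorphic compatibly with the isomorphism is induced by a quasiconformal homeomorphism of $ \hat{\C} $. The work is therefore to promote the hypotheses---discrete, non-elementary, admitting a $ \Lambda $-circle chain, homeomorphic quotients---into the hypotheses of that theorem, with the circle chain doing the geometric heavy lifting.

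First I would establish faithfulness. Since $ \H^3/\rho(G) $ is a manifold homeomorphic to $ \H^3/G $, the group $ \rho(G) $ is torsion-free and hence acts freely on $ \H^3 $, so $ \rho(G)\cong\pi_1(\H^3/\rho(G))\cong\pi_1(\H^3/G)\cong G $, the middle isomorphism being the one induced by the homeomorphism between these aspherical spaces. As $ \rho $ surjects $ G $ onto a group abstractly isomorphic to $ G $, and $ G $ is a finitely generated linear group, hence residually finite and therefore Hopfian, the surjection $ \rho $ is forced to be injective.

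Next I would use the $ \Lambda $-circle chain to certify that $ \rho(G) $ is geometrically finite with bending lamination $ \Lambda $ and that $ \rho $ is type-preserving. By hypothesis the images $ \rho(\Pi_{i,j}) $ are (finitely many) maximal $ F $-peripheral subgroups with the same incidence structure as the $ \Pi_{i,j} $. Each $ \rho(\Pi_{i,j}) $ is Fuchsian and stabilises a peripheral disc whose dome is a totally geodesic plane in $ \H^3 $; the matching incidence data means these planes meet exactly as the flat pieces of $ \hconv\Lambda(\rho(G)) $ do, so that the pieces assemble along the leaves of $ \Lambda $ into the pleated surface bounding the convex core of $ \H^3/\rho(G) $. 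Finiteness of the collection then exhibits a finite-volume convex core, giving geometric finiteness with bending lamination $ \Lambda $. The tangency points of the circle chain encode precisely the rank-$ 1 $ cusps (the length-zero leaves of $ \Lambda $, by the completeness condition of \cref{defn:rat_lam}), so the preserved tangency structure forces $ \rho $ to carry these peripheral parabolics to parabolics; the remaining rank-$ 2 $ cusps are matched by the homeomorphism of quotients, yielding type-preservation.

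Finally I would assemble these facts into Marden's theorem. Faithfulness makes $ \rho $ an isomorphism onto the geometrically finite group $ \rho(G) $, and by the previous step this isomorphism respects the peripheral structure; the associated homotopy equivalence can then be promoted, using the topological rigidity available for these tame manifolds with boundary together with the hypothesised homeomorphism, to a homeomorphism $ \H^3/G\to\H^3/\rho(G) $ inducing $ \rho $. Marden's isomorphism theorem now produces a quasiconformal $ \phi $ with $ \phi G\phi^{-1}=\rho(G) $, establishing the third axiom and hence $ \rho(G)\in\QH(G) $. I expect the middle step to be the main obstacle: one must show that the $ F $-peripheral flat pieces genuinely reassemble into the \emph{convex core} boundary of $ \H^3/\rho(G) $---rather than some other immersed pleated surface---so that geometric finiteness and the correct bending lamination are actually certified, and one must match the cusp structure globally rather than only on the pieces visible to the circle chain.
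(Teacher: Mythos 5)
Your skeleton---discreteness by hypothesis, faithfulness via the Hopfian property of finitely generated linear groups, and a final appeal to Marden--Tukia (\cref{thm:mardentukia}) once the convex core structure is pinned down---is consistent with how the paper frames the proposition. But the step you yourself flag as ``the main obstacle'' is not an obstacle to be deferred: it is the entire content of the paper's proof, and your proposal supplies no argument for it. Asserting that ``the matching incidence data means these planes meet exactly as the flat pieces of $\hconv\Lambda(\rho(G))$ do'' does not follow from the definition of a circle chain: incidence of the subgroups $\rho(\Pi_{i,j})$ is combinatorial data, and a priori (i) limit points of the big group $\rho(G)$ could intrude into a peripheral disc beyond $\Lambda(\rho(\Pi_{i,j}))$, (ii) elements of $\rho(G)$ outside the chain could translate one dome across another, so the putative flat pieces need not embed disjointly in the quotient, and (iii) the convex core boundary could contain flat pieces, or the manifold could have ends, invisible to the chain. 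The paper closes off exactly these failure modes with three lemmas: \cref{lem:strongly_peripheral} (the limit set of each peripheral group is precisely $\Lambda(\rho(G))\inter\overline{\Delta(\Pi_{i,j})}$), \cref{lem:neilsen_region_is_fd} (the Nielsen region on each dome is precisely invariant under $\Pi_{i,j}$ in $\rho(G)$, so the pieces embed), and \cref{lem:pl_ray_is_glued_discs} (a Siegel/Gauss--Bonnet area count showing the sum of the areas of the Nielsen-region quotients equals the area of the full convex core boundary, so the pieces exhaust it). None of these ideas, nor substitutes for them, appear in your proposal.

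A second, related problem: your claim that ``finiteness of the collection then exhibits a finite-volume convex core, giving geometric finiteness'' is not sound as stated. Finitely many finite-area flat pieces sitting inside the convex core boundary certify nothing by themselves---a singly degenerate surface group has a single finite-area pleated surface bounding its convex core yet is not geometrically finite. What is needed is precisely the exhaustion statement (the area count of \cref{lem:pl_ray_is_glued_discs}, which is where the homeomorphism hypothesis enters, by fixing the topological type of the full boundary surface), ruling out ends that the circle chain does not see. Until that step is supplied, the reduction to Marden's isomorphism theorem has nothing to stand on.
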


\begin{rem}
  In particular, if $ X(G) $ is known to have a single locus of discrete groups with a given conformal boundary structure---e.g.\ the case of the Riley and Maskit slices of \cref{ex:riley},
  or more generally genus $2$ Schottky space, since all genus $2$ handlebodies are homeomorphic---then existence of a $\Lambda$-circle chain gives a full certification of membership of $ \QH(G) $.
\end{rem}

The proof of \cref{prp:circle_chain_encodes_membership}, which will be completed in \cref{lem:pl_ray_is_glued_discs}, goes via the following chain of reasoning: if there exists a $\Lambda$-circle chain then the image
under $ \rho $ of each peripheral group $ \Pi_{i,j} \leq G $ acts as a hyperbolic isometry group on the hyperbolic dome $ \Delta_{i,j} $ above its peripheral disc, and the
union of the quotients $ \Delta_{i,j}/\Pi_{i,j} $ is the end of the convex core of $ \H^3/\rho(G) $ facing the surface $ S_i $. In other words, $\Lambda$-circle
chains model the convex core boundary of the manifold. Our arguments generalise those given by Keen and Series~\cite[Lemma~3.5]{keen94}, with two
additional difficulties: first, we allow more complicated global groups $G$ than quotients of the genus $2$ surface group; and second, we allow arbitrary laminations,
not just maximal ones.

In the following sequence of lemmata we assume the hypotheses of \cref{prp:circle_chain_encodes_membership}, so $ \rho(G) $ is discrete and admits a $\Lambda$-circle chain $ \{\Pi_{i,1},\ldots,\Pi_{i,m_i}\}_{i=1}^n $.
\begin{lem}\label{lem:strongly_peripheral}
  For all $ i,j $, $ \Lambda(\Pi_{i,j}) = \Lambda(\rho(G)) \inter \overline{\Delta(\Pi_{i,j})} $ where $ \Delta(\Pi_{i,j}) $ is the peripheral disc preserved by $ \Pi_{i,j} $.
\end{lem}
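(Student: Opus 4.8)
The plan is to prove the two inclusions separately, the reverse one being the substantial direction. Write $\tilde G = \rho(G)$, $\Pi = \Pi_{i,j}$ and $\Delta = \Delta(\Pi)$, and set $K = \Lambda(\tilde G) \cap \partial\Delta$. The forward inclusion $\Lambda(\Pi) \subseteq \Lambda(\tilde G)\cap\overline{\Delta}$ is immediate: since $\Pi \leq \tilde G$ we have $\Lambda(\Pi) \subseteq \Lambda(\tilde G)$, and since $\Pi$ is F-peripheral it is Fuchsian acting on the round disc $\Delta$, so its limit set lies on the invariant circle $\partial\Delta \subseteq \overline\Delta$. Thus $\Lambda(\Pi) \subseteq K \subseteq \Lambda(\tilde G)\cap\overline\Delta$. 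The peripheral disc is an open subset of $\Omega(\tilde G)$ by definition, so $\Lambda(\tilde G)\cap\Delta = \emptyset$ and hence $\Lambda(\tilde G)\cap\overline\Delta = K$; the whole lemma therefore reduces to the reverse inclusion $K \subseteq \Lambda(\Pi)$.

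First I would set up the convex-hull picture. Because $\Delta$ is round and $\Lambda(\tilde G)\cap\Delta=\emptyset$, the hyperbolic plane $\mathcal D$ spanning $\partial\Delta$ (the dome over $\Delta$) is a support plane of the convex hull $C = \hconv\Lambda(\tilde G)$; indeed the discussion preceding \cref{defn:chain} records that this dome supports a flat piece of the pleated surface $\partial C$. The corresponding face $F = \mathcal D \cap C$ is totally geodesic, is preserved by $\Pi$ (which preserves both $\mathcal D$ and $\Lambda(\tilde G)$), and satisfies $\partial_\infty F = \partial\Delta \cap \Lambda(\tilde G) = K$; inside the plane $\mathcal D \cong \H^2$ it is the hyperbolic convex hull $F = \hconv(K)$. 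In these terms the edges (boundary geodesics) of $F$ are exactly the geodesics subtending the complementary gaps of $K$ in $\partial\Delta$.

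The heart of the argument is then to show that the gaps of $\Lambda(\Pi)$ in $\partial\Delta$ are already edges of $F$. Since $\Pi$ is the Fuchsian group uniformising the complementary piece $P_{i,j}$ of $\Lambda$, the interval gaps of $\Lambda(\Pi)$ correspond precisely to the $\Pi$-translates of the loxodromic boundary leaves of $P_{i,j}$, the endpoints of each gap being the fixed points of the corresponding leaf element; cusped boundary leaves instead contribute parabolic fixed points lying in $\Lambda(\Pi)$ rather than gaps, so in the extreme case that $P_{i,j}$ carries only cusps (e.g. a thrice-punctured sphere) $\Pi$ is of the first kind, $\Lambda(\Pi) = \partial\Delta$, and there is nothing to prove. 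For a genuine gap $\alpha = (a,b)$ subtended by a leaf $\lambda$, the circle-chain incidence hypothesis (\cref{defn:chain}) identifies $\lambda$ with a bending geodesic of $\partial C$ adjacent to $F$---equivalently $a,b$ are the fixed points of the element generating $\Pi_{i,j}\cap\Pi_{i,j'}$ for the neighbouring piece across the edge of $\Gamma(\Lambda)$ dual to $\lambda$---so $\lambda$ is an edge of the face $F$ and $F$ lies entirely on the non-$\alpha$ side of $\lambda$.

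Finally I would close the argument by contradiction: if some $\xi \in K$ lay in an open gap $\alpha=(a,b)$ of $\Lambda(\Pi)$, then $a,\xi,b$ would be three points of $K = \partial_\infty F$ with $\xi$ on the far side of the geodesic $\lambda=[a,b]$, so $F = \hconv(K)$ would contain points strictly beyond $\lambda$, contradicting that $\lambda$ is an edge of $F$. Hence $K$ meets no gap of $\Lambda(\Pi)$, which gives $K\subseteq\Lambda(\Pi)$ and therefore $K = \Lambda(\Pi)$, the assertion of the lemma. I expect the main obstacle to be the step of the previous paragraph: pinning down that the face $F$ bends along \emph{exactly} the leaves adjacent to $P_{i,j}$ prescribed by the circle chain, with no spurious edges and none missing. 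This rests on the maximality of the F-peripheral group $\Pi$ (so that $F$ is a single uninterrupted flat piece) together with the matching of the incidence structure of the chain with $\Gamma(\Lambda)$, and it is precisely where the hypothesis that $\Lambda$ is the convex-core bending lamination is used.
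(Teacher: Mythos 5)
Your overall skeleton is sound and matches the paper's up to a point: the forward inclusion is trivial, the problem reduces to showing no limit point of $\rho(G)$ lies in a gap of $\Lambda(\Pi_{i,j})$ on $\partial\Delta(\Pi_{i,j})$, and the gaps are classified (via standard Fuchsian theory) as $\Pi_{i,j}$-translates of the gaps subtended by the primitive boundary elements. The paper does exactly this too. But your key step is circular. You assert that the geodesic $\lambda$ over a gap "is a bending geodesic of $\partial C$ adjacent to $F$, so $\lambda$ is an edge of the face $F$ and $F$ lies entirely on the non-$\alpha$ side." Nothing in \cref{defn:chain} licenses this: a circle chain for $\rho(G)$ is purely group-theoretic and incidence-theoretic data, and at this stage of the argument it is \emph{not} known that $\hconv\Lambda(\rho(G))$ is pleated along the $\rho$-images of the leaves of $\Lambda$ — that is precisely the conclusion of \cref{lem:pl_ray_is_glued_discs}, i.e.\ of the proposition this lemma is the first step towards, and \cref{lem:strongly_peripheral} is an ingredient in its proof. (Note also that the hypothesis of \cref{prp:circle_chain_encodes_membership} makes $\Lambda$ the bending lamination of $G$, not of $\rho(G)$; transferring the pleating structure across $\rho$ is the whole point.) You flag this step yourself as the "main obstacle" and propose maximality of $\Pi_{i,j}$ as the cure, but maximality only controls $\mathrm{Stab}_{\rho(G)}\Delta(\Pi_{i,j})$; it does not exclude limit points of $\rho(G)$ arising from elements that do not stabilise $\Delta(\Pi_{i,j})$ (parabolic fixed points, or accumulation points of orbits) from landing inside a gap, so the face $F=\hconv(K)$ could a priori spill across $\lambda$.

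The missing ingredient — and the paper's actual mechanism — is much more elementary and makes no reference to the convex hull at all: by the incidence structure of the circle chain, the primitive gap with endpoints $\Fix(g_k)$ is flanked by a \emph{neighbouring} F-peripheral group $\gamma\Pi_{i,j_k}\gamma^{-1}$ containing $g_k$, whose round peripheral disc $\Delta'$ also passes through $\Fix(g_k)$; the arc of $\partial\Delta(\Pi_{i,j})$ interior to $\Delta'$ contains no point of $\Lambda(\Pi_{i,j})$ (since $\Delta'\subset\Omega(\rho(G))$), so it is precisely the gap in question, and being contained in the open set $\Delta'\subset\Omega(\rho(G))$ it contains no limit point of $\rho(G)$ whatsoever. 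Translating by $\Pi_{i,j}$ then clears every gap, giving $K\subseteq\Lambda(\Pi_{i,j})$. In other words, the peripherality of the \emph{adjacent} discs is what does the work; your argument never invokes it, and without it the claim that $K$ avoids the gaps has no support.
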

\begin{proof}
  The quotient $ \Delta(\Pi_{i,j})/\Pi_{i,j} $ is a hyperbolic surface with $ b $ boundary components represented by the hyperbolic elements $ g_1,\ldots,g_b \in \Pi_{i,j} $ and $ p $ punctures
  represented by parabolic elements $ g_{b+1},\ldots,g_{b+p} \in \Pi_{i,j} $. By the definition of a circle chain, for each $ g_k $ there exists some $ j_k \in \{1,\ldots,m_i\} $ and
  some $ \gamma_k \in \rho(G) $ such that $ g_k \in \Pi_{i,j} \inter \gamma \Pi_{i,j_k} \gamma^{-1} $. The boundary circle $ \partial \Delta(\Pi_{i,j}) $ intersects with the boundary
  circle $ \partial \Delta(\gamma \Pi_{i,j_k} \gamma^{-1}) $ at the fixed points of $ g_k $. If $ g_k $ is hyperbolic (i.e.\ $k \leq b $), then let $ \sigma_k $ be the arc
  in  $ \partial \Delta(\Pi_{i,j}) $ between the two fixed points of $ g_k $ which is contained in $ \Delta(\gamma \Pi_{i,j_k} \gamma^{-1}) $; it contains no limit points of $ \rho(G) $
  since $ \gamma \Pi_{i,j_k} \gamma^{-1} $ is F-peripheral.

  Now by standard theory of Fuchsian groups~\cite[\S 10.3]{beardon} every interval of discontinuity of $ \Pi_{i,j} $ on $ \partial \Delta(\Pi_{i,j}) $ is $\Pi_{i,j}$-equivalent
  to the interval between fixed points of $ \gamma \Pi_{i,j_k} \gamma^{-1} $ for some F-peripheral group in the conjugacy class of a circle chain element.
  Since points on $ \partial \Delta(\Pi_{i,j}) $ are either in intervals of discontinuity or are limit points of $ \Pi_{i,j} $, and no limit points of $ \rho(G) $ can lie on any images of $ \sigma_k $,
  we see that $ \partial \Delta(\Pi_{i,j}) \inter \Lambda(\rho(G)) = \Lambda(\Pi_{i,j}) $ as required.
\end{proof}

\begin{lem}\label{lem:neilsen_region_is_fd}
  For each $ i,j $ let $ H_{i,j} $ be the hyperbolic plane erected above $ \Delta(\Pi_{i,j}) $; by Poincar\'e extension from $ \hat{\C} $ to $ \H^3 $, $ \Pi_{i,j} $ acts
  as a group of hyperbolic isometries on $ H_{i,j} \simeq \H^2 $ and so we may define a Nielsen region $ N_{i,j} $ for this action. This Nielsen region is precisely
  invariant under $ \Pi_{i,j} $ in $ \rho(G) $.
\end{lem}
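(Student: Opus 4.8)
The plan is to unwind the definition of precise invariance into two parts: (i) that $\Pi_{i,j}$ preserves $N_{i,j}$, and (ii) that for every $\gamma \in \rho(G) \setminus \Pi_{i,j}$ the translate $\gamma N_{i,j}$ meets $N_{i,j}$ only along a bending geodesic, so that the open Nielsen regions are disjoint. Part (i) is immediate: $N_{i,j}$ is the convex hull in $H_{i,j}$ of $\Lambda(\Pi_{i,j})$, and $\Pi_{i,j}$ preserves both the plane $H_{i,j}$ and its own limit set, hence the hull. The work is in (ii), and the first step there is to identify $\Pi_{i,j}$ with the full stabilizer $\Stab_{\rho(G)}(\Delta(\Pi_{i,j}))$ of its peripheral disc. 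This stabilizer preserves a simply connected subset of $\Omega(\rho(G))$ and acts on the round disc $\Delta(\Pi_{i,j})$, so it is itself $F$-peripheral; its peripheral disc contains, hence equals, $\Delta(\Pi_{i,j})$ (the maximal disc of an overgroup is contained in that of the subgroup). As it contains $\Pi_{i,j}$, maximality forces equality.

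Next I reduce to a statement about totally geodesic planes. Since $N_{i,j} \subseteq H_{i,j}$ and $\gamma N_{i,j} \subseteq \gamma H_{i,j}$, any intersection forces the planes $H_{i,j}$ and $\gamma H_{i,j}$ to meet, where $\gamma H_{i,j}$ is the plane erected above $\gamma \Delta(\Pi_{i,j}) = \Delta(\gamma \Pi_{i,j}\gamma^{-1})$. Two such planes meet if and only if their boundary circles $C = \partial\Delta(\Pi_{i,j})$ and $C' = \gamma C$ coincide or cross transversally (disjoint, nested, or tangent circles bound disjoint planes). If the circles coincide, then $\gamma$ stabilises the distinguished disc $\Delta(\Pi_{i,j})$, so $\gamma \in \Pi_{i,j}$ by the stabiliser identification, contrary to hypothesis; hence the planes either are disjoint, in which case $\gamma N_{i,j} \cap N_{i,j} = \emptyset$ trivially, or meet in a single geodesic $\ell$ with ideal endpoints $p, q = C \cap C'$.

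The heart of the argument is this transverse case, and it is precisely where \cref{lem:strongly_peripheral} enters. The open arc $\tau \subset C$ lying inside the disc $\Delta(\gamma\Pi_{i,j}\gamma^{-1})$ is contained in a peripheral disc, hence in $\Omega(\rho(G))$, so $\tau$ carries no limit points; by \cref{lem:strongly_peripheral} we therefore have $\Lambda(\Pi_{i,j}) \subseteq C \setminus \tau$, the complementary closed arc. Taking convex hulls, $N_{i,j} = \hconv \Lambda(\Pi_{i,j})$ lies in the closed half-plane of $H_{i,j}$ bounded by $\ell$ on the side of $C \setminus \tau$, so $\operatorname{int} N_{i,j}$ is disjoint from $\ell$. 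Since $\gamma N_{i,j} \cap N_{i,j} \subseteq H_{i,j} \cap \gamma H_{i,j} = \ell$, we conclude $\operatorname{int} N_{i,j} \cap \gamma N_{i,j} = \emptyset$. This is precise invariance in the sense that distinct translates have disjoint interiors; the only contact is along $\ell$, which occurs exactly when $C'$ is an adjacent chain circle and $\ell$ is the common axis of a shared boundary element, i.e. a bending geodesic of the convex core.

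The main obstacle I expect is not the transverse estimate, which is clean once \cref{lem:strongly_peripheral} is available, but the bookkeeping in the coincident-plane case. One must fix $\Delta(\Pi_{i,j})$ as the \emph{distinguished} peripheral disc facing the surface $S_i$ and take $\Pi_{i,j}$ to be its orientation-preserving stabiliser, and then either exclude or correctly interpret the degenerate totally geodesic configuration in which both complementary discs of $C$ lie in $\Omega(\rho(G))$ and some $\gamma \in \rho(G)$ interchanges them; this is the one situation where $\gamma C = C$ without $\gamma \in \Stab_{\rho(G)}(\Delta(\Pi_{i,j}))$, and it must be handled by the side convention rather than by the circle geometry.
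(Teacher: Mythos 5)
Your proof is correct and follows the same overall strategy as the paper's: identify $\Pi_{i,j}$ with a full stabiliser via maximality, split into a coincident case and a transverse case, and in the transverse case use peripherality of the translated disc (via \cref{lem:strongly_peripheral}) to trap $N_{i,j}$ in the half-plane bounded by the intersection geodesic $\ell$, with openness of the Nielsen region finishing the argument. Two differences are worth recording. First, you identify $\Pi_{i,j}$ with the stabiliser of the \emph{disc}: $\Stab_{\rho(G)}(\Delta(\Pi_{i,j}))$ preserves a round disc inside $\Omega(\rho(G))$, hence is Fuchsian and F-peripheral, so maximality gives the equality in one line and makes the coincident case immediate. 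The paper instead proves $\Pi_{i,j} = \Stab_{\rho(G)}(N_{i,j})$ (geodesics through pairs of points of $N_{i,j}$, a covering-map argument, then maximality), and in its case (1) still needs a separate argument that a disc-stabilising $g$ permutes arcs of discontinuity and preserves angles in order to conclude that it stabilises $N_{i,j}$; your route is cleaner. Second, your case split by the boundary circle rather than by the disc exposes a configuration that the paper's proof silently mishandles: if $\gamma C = C$ but $\gamma$ exchanges the two complementary discs, the paper falls into its case (2) (since $g\Delta(\Pi_{i,j}) \neq \Delta(\Pi_{i,j})$) even though the ``two geodesic domes'' coincide, so there is no arc of intersection and that argument collapses. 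Your final paragraph correctly isolates this, and note that it is not merely a matter of conventions: in that configuration $\Lambda(\rho(G)) \subseteq C$, so by non-elementarity $\rho(G)$ preserves $C$, maximality forces $\Pi_{i,j}$ to equal the index-two side-preserving subgroup, and then $\gamma$ normalises $\Pi_{i,j}$ and fixes $N_{i,j}$ setwise---so precise invariance genuinely fails there, and the configuration must be \emph{excluded} (for instance by showing the images of the chain groups cannot all lie in the side-preserving subgroup), not reinterpreted. Since the paper's own proof does not address this either, your proposal is at least as complete as the published argument.
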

\begin{proof}
  We first show that $ \Pi_{i,j} = \Stab_{\rho(G)} N_{i,j} $. Let $ S = \Stab_{\rho(G)} N_{i,j} $. The subgroup $ S $ stabilises the boundary of $ \Delta(\Pi_{i,j}) $.
  Indeed, $S$ sends an arc through two points in $ N_{i,j} $ to another arc through two points in $ N_{i,j} $, and since $ N_{i,j} $ is full-dimensional in $ H_{i,j} $, for
  any pair of points $ \xi_1,\xi_2 $ in $ \partial \Delta(\Pi_{i,j}) $ there exist two points in $ N_{i,j} $ lying on the geodesic $[\xi_1,\xi_2]$; thus $ S $, sending $ \xi_1 $
  and $ \xi_2 $ to the endpoints of the geodesic joining the images of the two points in $ N_{i,j} $, sends $ \xi_1 $ and $ \xi_2 $ to two other points on $ \partial \Delta(F_{i,j}) $.
  Since $ S $ stabilises the boundary, and therefore stabilises the entire hemisphere $ H_{i,j} $, it is Fuchsian. Thus, since $ S \geq \Pi_{i,j} $, there is an induced covering
  map $ N_{i,j}/\Pi_{i,j} \to N_i/S $. The maximality of $ \Pi_{i,j} $ completes the proof that $ \Pi_{i,j} = S $.

  Now suppose for contradiction that $ g \in \rho(G)\setminus \Pi_{i,j} $ but $ g(N_{i,j}) \inter N_{i,j} \neq \emptyset $. We have two cases.
  \begin{enumerate}
    \item Suppose $ \Delta(\Pi_{i,j}) = g \Delta(\Pi_{i,j}) $. Then $ g $ stabilises $ \Delta(\Pi_{i,j}) $, and by \cref{lem:strongly_peripheral} it must permute the arcs of
          discontinuity of $ \Pi_{i,j} $ on the boundary. Further it is conformal on $ H_{i,j} $ and so preserves the angles of the edges of $ N_{i,j} $ on translation. These
          two facts imply that $ g $ stabilises the Nielsen region and thus $ g \in \Pi_{i,j} $, giving the required contradiction.
    \item On the other hand, suppose $ \Delta(\Pi_{i,j}) \neq g\Delta(\Pi_{i,j}) $. The set $ g(N_{i,j}) \inter N_{i,j} $ lies on the arc of intersection of the two geodesic
          domes $ H_{i,j} $ and $ gH_{i,j} $. Since $ \Pi_{i,j} $ is F-peripheral, the arc $ \Delta_{i,j} \inter \partial g\Delta_{i,j} $ is an arc of discontinuity for $ \Pi_{i,j} $
          and so $ N_{i,j} $ is bounded by the arc joining the intersection points of $ \Delta_{i,j} $ with $ \partial g\Delta_{i,j} $. On the other hand $ N_{i,j} $ is open,
          and so $ N_{i,j} $ cannot contain any points of this arc (which gives the contradiction). \qedhere \end{enumerate}
\end{proof}

\begin{lem}\label{lem:pl_ray_is_glued_discs}
  The convex core boundary $ \partial (\hconv \Lambda \rho(G))/\rho(G) $ consists of $r = \sum_{i=1}^n m_i $ flat pieces, glued along the pleating locus of the surface which consists
  exactly of the projection of the axes of the boundary-parallel generators of the $ \Pi_{i,j} $ (i.e.\ the words representing the leaves of $ \Lambda $).
\end{lem}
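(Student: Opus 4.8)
The plan is to realise each Nielsen region $N_{i,j}$ as a flat piece of the convex core boundary, to glue neighbouring pieces along the axes of their shared boundary-parallel elements, and then to show that the $\rho(G)$-orbit of these finitely many pieces exhausts $\partial\hconv\Lambda(\rho(G))$, descending to the claimed decomposition in the quotient.

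First I would check that each hemisphere $H_{i,j}$ is a support plane for $\hconv\Lambda(\rho(G))$ whose face is $N_{i,j}$. By \cref{lem:strongly_peripheral} the open disc $\Delta(\Pi_{i,j})$ contains no points of $\Lambda(\rho(G))$, so $\Lambda(\rho(G)) \subseteq \overline{\hat{\C}\setminus\Delta(\Pi_{i,j})}$; the half-space lying over the complementary disc therefore contains the whole convex hull, exhibiting $H_{i,j}$ as a support plane. By the standard description of the faces of a hyperbolic convex hull, the face $\hconv\Lambda(\rho(G))\cap H_{i,j}$ is the convex hull inside $H_{i,j}$ of the ideal points on the bounding circle, namely $\Lambda(\rho(G))\cap\partial\Delta(\Pi_{i,j}) = \Lambda(\Pi_{i,j})$ (again by \cref{lem:strongly_peripheral}); this is exactly the Nielsen region $N_{i,j}$.

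Next I would analyse the edges. The quotient $N_{i,j}/\Pi_{i,j}$ is the convex core of the hyperbolic surface $\Delta(\Pi_{i,j})/\Pi_{i,j}$, whose boundary geodesics are the axes $\Ax(g_k)$ of the hyperbolic boundary-parallel generators $g_1,\dots,g_b$ while the parabolic generators give ideal cusp vertices. For each hyperbolic $g_k$ the circle-chain hypothesis supplies an incident conjugate with $g_k\in\Pi_{i,j}\cap\gamma\Pi_{i,j_k}\gamma^{-1}$; both invariant circles then pass through $\Fix(g_k)$, so the hemispheres $H_{i,j}$ and $\gamma H_{i,j_k}$ meet exactly along the geodesic $\Ax(g_k)$. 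Hence $N_{i,j}$ and $\gamma N_{i,j_k}$ are glued along $\Ax(g_k)$ with bending angle equal to the dihedral angle between the two support planes, and in $\H^3/\rho(G)$ these gluing curves are precisely the closed geodesics represented by the boundary-parallel generators, i.e. the leaves of $\Lambda$.

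Finally I would prove that $W=\bigcup_{\gamma,i,j}\gamma N_{i,j}$ is all of $\partial\hconv\Lambda(\rho(G))$. Precise invariance (\cref{lem:neilsen_region_is_fd}) makes the translates of each $N_{i,j}$ pairwise equal or disjoint, so $W$ is a locally finite, hence closed, union of flat pieces whose image downstairs is exactly the $r=\sum_i m_i$ surfaces $N_{i,j}/\Pi_{i,j}$. The matching of the previous paragraph shows every boundary geodesic of every piece is shared with a neighbour, so $W$ has no free edges and is therefore relatively open as well. The main obstacle is to upgrade this `clopen' statement to `everything', ruling out any a priori wild part of the boundary left uncovered: for this I would use that the incidence pattern of the chain reproduces $\Gamma(\Lambda)$, so that regluing the pieces $\Delta(\Pi_{i,j})/\Pi_{i,j}$ along the leaves as in \cref{cons:stratification_of_surfaces} already rebuilds the full conformal surfaces $S_i$, which under the nearest-point retraction correspond to all the components of the convex core boundary. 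This forces $W$ to exhaust $\partial\hconv\Lambda(\rho(G))$, simultaneously showing that $\rho(G)$ has a finite-sided convex core and yielding the decomposition into $r$ flat pieces pleated exactly along the leaves of $\Lambda$.
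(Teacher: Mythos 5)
Your proposal is correct in substance, but it settles the crucial exhaustion step by a genuinely different argument from the paper's. Your first two steps (the hemispheres $H_{i,j}$ are support planes whose faces are exactly the Nielsen regions, via \cref{lem:strongly_peripheral}; adjacent pieces meet along the axes of the shared boundary-parallel elements) are consistent with, and in fact more explicit than, what the paper leaves implicit---the paper simply asserts that each $N_{i,j}/\rho(G)$ lies in the convex core boundary and never spells out the gluing along axes, so your version justifies the ``pleating locus'' clause of the statement more directly. Where you diverge is the final step: you argue that the union $W$ of translates is clopen (locally finite hence closed, no free edges hence relatively open) and then count components, whereas the paper uses an area count: by \cref{lem:neilsen_region_is_fd} the pieces $N_{i,j}/\Pi_{i,j}=N_{i,j}/\rho(G)$ embed disjointly in the convex core boundary, and the Siegel (Gauss--Bonnet) area of the full surface equals the sum of the areas of the pieces, which forces the pieces to exhaust it in one stroke, with no need for any local-finiteness or free-edge analysis.

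One place your argument needs tightening: a nonempty clopen set is only a union of components, so you must rule out components of $\partial\hconv\Lambda(\rho(G))$ disjoint from $W$. Your appeal to ``rebuilding the $S_i$'' is the right idea but, as phrased, is close to assuming what is to be proved: why should every component of the convex core boundary of $\H^3/\rho(G)$ correspond to one of the $S_i$? The missing ingredient is the hypothesis of \cref{prp:circle_chain_encodes_membership} that $\H^3/\rho(G)$ is homeomorphic to $\H^3/G$: distinct components of the convex core boundary face distinct ends of the underlying manifold, the homeomorphism bounds the number of relevant ends by $n$, and $W$ already supplies $n$ disjoint closed components (one per $S_i$, each connected because $\Gamma(\Lambda_i)$ is connected). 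With that count made explicit your argument closes; note that the paper's area bound on the full surface secretly uses the same homeomorphism hypothesis, to know the topological type (hence the total area) of the full convex core boundary. Finally, local finiteness of the family of translates $\{\gamma N_{i,j}\}$---needed for $W$ to be closed---deserves a sentence (it follows from proper discontinuity, since every point of $N_{i,j}$ is moved a uniformly bounded distance by some nontrivial element of $\Pi_{i,j}$), though the paper glosses comparable points.
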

\begin{proof}
  Let $ H_{i,j} $ be the hemispheres above the $ \Delta(\Pi_{i,j}) $ and let $ N_{i,j} $ be the respective Nielsen regions for the actions on the hemispheres by the $\Pi_{i,j} $.
  Each $ S_{i,j} \coloneq N_{i,j}/F_{i,j} $ is a thrice-holed sphere (possibly with some holes represented by parabolics). All surfaces are hyperbolic and so have curvature $ -1 $.
  We can therefore compute the area of each $ S_{i,j} $ using the Siegel area formula~\cite[Theorem~10.4.3]{beardon}. By \cref{lem:neilsen_region_is_fd}, $ N_{i,j}/\Pi_{i,j} = N_{i,j}/\rho(G) $
  for all $ i $. Since all the $ \Pi_{i,j} $ are F-peripheral, each $ N_{i,j}/\rho(G) $ lies in $ \partial (\hconv \Lambda( \rho(G)))/\rho(G) $; and since the $ \Pi_{i,j} $ are
  non-conjugate, they are disjoint subsets of the surface. On the other hand, we may apply the Siegel area formula to the full surface, and the result is the same as the sum of the areas of the surfaces.
  Thus the full surface must be the union of the $S_{i,j} $.
\end{proof}

\begin{rem}
  This concludes the proof of \cref{prp:circle_chain_encodes_membership}.\qed
\end{rem}

% An immediate application is the construction of interesting group extensions of infinite covolume groups to produce cofinite groups with embedded Fuchsian surfaces. We state the
% result for maximal laminations on genus $2$ surfaces only, and let the reader supply the generalisations to higher genus surfaces and manifolds with more conformal ends; the proof
% is just an application of Maskit's second combination theorem~\cite[\S VII.E]{maskit}:
% \begin{thm}
%   Let $ G $ be a convex cofinite manifold group and let $ M = \H^3/G $. Suppose that $ G $ is maximally cusped (i.e.\ all components of $ \partial M $ are unions of thrice-punctured spheres joined at rank $1$ cusps) such that
%   that the connected components of $ \Gamma(\Lambda) $ all have two vertices (i.e.\ all components of $ \partial M $ are genus $2$ when the punctures are filled in). For each component
%   $ S_i $ of $ \partial M $, there exists some parabolic $ \Phi_i \in \PSL(2,\C) $ so that $ \Phi_i $ conjugates $ \Pi_{i,1} $ onto $ \Pi_{i,2} $ and sends the exterior of $ \Delta(\Pi_{i,1}) $
%   to the interior of $ \Delta(\Pi_{i,2}) $. The extension $ \hat{G} = \langle G, \Phi_1, \ldots, \Phi_n \rangle $ is discrete and $ \H^3/\hat{G} $ is a hyperbolic $3$-manifold of finite volume; this volume is equal to the
%   convex core volume of $ M $. \qed
% \end{thm}
%
% A large number of special cases of this theorem and its generalisations appear as various constructions of Wielenberg~\cite{wielenberg78} and Apanasov~\cite[325--326]{apanasov}.

\section{Pleating varieties}\label{sec:varieties}
Given a rational lamination $ \Lambda $ on the conformal boundary of a finite-type topological $3$-manifold $ M $, we have defined algebraic structures
called $\Lambda$-circle chains in $ G = \pi_1(M) $ which control the large-scale geometry of the surface ends of convex cofinite manifold
representations $ \rho : G \to \PSL(2,\C) $; equivalently, they control the bending lamination on the convex core boundary of $ \H^3/\rho(G) $.

\begin{defn}\label{defn:pleating_variety}
  Let $ G $ be a convex cofinite manifold group, let $ \Lambda $ be a maximal rational lamination on $ \Omega(G)/G $, and let $ X $ be an algebraic parameter space for the
  representations $ \rho : G \to \PSL(2,\C) $. The \df{$\Lambda$-pleating variety} is the set $ \mc{P}_\Lambda \subset X $
  of $ \rho \in X $ such that $ \rho(G) $ has a $\Lambda$-circle chain. Here, as we will often do, we identify points of $X$ with their corresponding images in $ X(G) $, forgetting the data of the parameterisation.
\end{defn}
\begin{rem}
  Our definition differs slightly from that of Choi and Series~\cite{choi06} as they do not require circle chains to consist of maximal F-peripheral groups.
\end{rem}
\begin{rem}
  We will usually work with parameter spaces $ X $ rather than the character variety $ X(G) $ since it is convenient when constructing examples. For example, the Riley slice (\cref{ex:riley})
  contains free groups on two generators, so embeds into a three-dimensional character variety; but it also embeds into a one-dimensional linear slice where two group elements are fixed parabolic. In this
  situation it is much easier to work with the one-parameter subvariety.
\end{rem}

\begin{figure}
  \labellist
  \hair 2pt
  \pinlabel {$\Lambda$} [b] at 115 170
  \pinlabel {$\Lambda'$} [b] at 277 156
  \pinlabel {cusp} [t] at 193 36
  \pinlabel {exterior of $ \QH(G) $} at 68 41
  \endlabellist
  \centering
  \includegraphics[width=.6\textwidth]{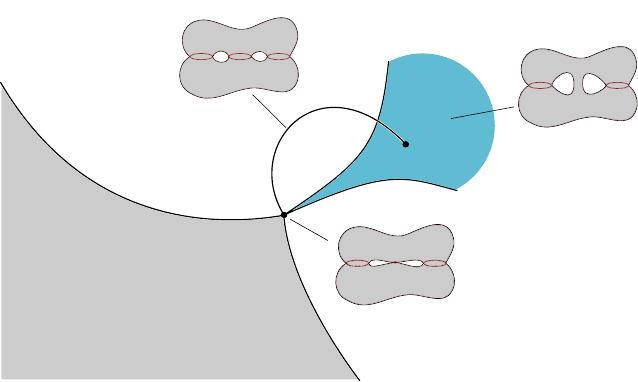}
  \caption{The relative position of two pleating varieties in $ \mc{S}_2 \subset X(F_2) $. \label{fig:pleating_ray_strata}}
\end{figure}

The object of this section is to describe the geometry of the pleating varieties in $ X $. The most na\"ive observation one makes is that a representation $ \rho \in X $ lies on the $ \Lambda$-pleating variety
only if the words in $ G $ corresponding to all the leaves of $ \Lambda $ are real (a more precise version of this is \cref{prp:schottky_rays} below). There are two immediate
obstructions to the converse holding: first, it is possible for the Fuchsian groups generated by these words (which are putative F-peripheral groups) to fail to be maximal
Fuchsian groups; and second, it is possible for there to exist maximal Fuchsian subgroups, generated by words representing $ \Lambda $, which are not peripheral. We will see that
the edge of the pleating variety is detected by the first type of obstruction. That is, suppose $ G $ is a convex
cofinite manifold group with a $\Lambda$-circle chain and that $ \sigma : [0,1] \to \QH(G) $ is a path such that $ \sigma(0) $ is the identity representation and such
that $ \sigma(1)(G) $ does not admit a $\Lambda$-circle chain; then the infimum $ t_\beta $ of $ t \in [0,1] $ such that $ \sigma(t)(G) $ does not have a $ \Lambda$-circle chain is of one of two kinds:
\begin{itemize}
  \item it does not admit any circle chain, i.e.\ the bending lamination of the convex core of $ \H^3/\sigma(t_\beta)(G) $ is not rational; or
  \item it admits a $ \Lambda'$-circle chain where $ \Lambda' $ is obtained by deleting some leaves from $ \Lambda $, and this circle chain is produced by taking
        the image of any $ \Lambda$-circle chain in $ G $ under the map $ \sigma(t_\beta) $ and, for every leaf $ \lambda $ that is deleted from $ \Lambda $, replacing
        the F-peripheral groups $\Pi $ and $ \Pi' $ joined by an edge dual to $\lambda $ with the group $ \langle \Pi, \Pi' \rangle $ iteratively (if a peripheral group
        is incident to two deleted leaves then the new group replacing it is generated by all three groups incident with these leaves, etc.); the new groups produced
        after the iterative procedure terminates (all leaves are removed) is now maximal and F-peripheral.
\end{itemize}

\begin{ex}
  \Cref{fig:pleating_ray_strata} is a schematic illustrating the relative embeddings of $ \mc{P}_{\Lambda} $ and $ \mc{P}_{\Lambda'} $ in a slice of genus two Schottky space
  where the two core curves of the handlebody are fixed hyperbolic. These curves define a lamination $ \Lambda' $ with two leaves that can be completed to a maximal lamination $ \Lambda $ by adding
  an additional curve $\lambda$. The $2$-dimensional pleating variety for $ \Lambda' $ is an embedding of a subset of the Teichm\"uller space of a four-holed sphere, viewed as the genus $2$-surface
  cut along the $\Lambda'$. This variety meets the boundary of the quasiconformal deformation space at the cusp corresponding to pinching $\lambda$ to a parabolic.
  Also eminating from this cusp is the $1$-dimensional pleating variety for $\Lambda$; the pleating angle across $ \lambda$
  increases from $0$ at the cusp to $ \pi$ where the two F-peripheral subgroups in the circle chain cease to be maximal and the $1$-dimensional variety hits the $2$-dimensional variety.
\end{ex}

Let $ \mc{S}_g $ denote genus $ g $ Schottky space embedded into $ X(F_g) $ and therefore into $ \C^N $ ($N$ some sufficiently large integer) by traces: this is possible
in general by standard results, and the particular traces may be chosen so that $ G \in X(F_g) $ is conjugate to a subgroup of $ \PSL(2,\R) $ if and only if all its
coordinates are real; i.e.\ the Fuchsian locus of $ \mc{S}_g $ is $ \mc{S}_g(\R) \coloneq \mc{S}_g \inter \R^N $.

\begin{ex}
  Of special interest to us is the Fuchsian locus of genus $2$ Schottky groups. These act on $ \H^2 $ to produce thrice-holed spheres; if $ G $ is such a group
  then the generators of $ G $ can be chosen to be $ X $, $ Y $, and $ XY $ such that all three of these elements are primitive boundary hyperbolics. The traces
  of these three elements of $ F_2 = F\{X,Y\} $ may be taken to parameterise $ X(F_2) $ and they are all real if and only if the corresponding representation is conjugate
  to a subgroup of $ \PSL(2,\R) $. We gave an explicit parameterisation of $ X(F_2) $ by these traces above, in \cref{ex:schottky_85}.
\end{ex}
\begin{lem}\label{lem:schottky_pleating_var}
  The locus $ \mc{S}_2(\R) $ is a union of connected components of the four semi-algebraic sets
  \begin{displaymath}\begin{array}{cc}
    (-\tr X, -\tr Y, -\tr XY) > -2&
    (-\tr X, \tr Y, \tr XY) > 2\\
    (\tr X, -\tr Y, \tr XY) > 2&
    (\tr X, \tr Y, -\tr XY) > 2.
    \end{array}
  \end{displaymath}
  The closure of each connected component meets a cusp.
\end{lem}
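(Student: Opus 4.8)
The plan is to identify $\mc{S}_2(\R)$ with the union of the four sign-orthants $U=\{\abs{\tr X},\abs{\tr Y},\abs{\tr XY}>2,\ \tr X\,\tr Y\,\tr XY<0\}$ listed in the statement, and then to read off the component and cusp assertions from an explicit Fenchel--Nielsen parameterisation together with a clopenness argument. By the preceding example, a point of $\mc{S}_2(\R)$ is a group $G\leq\PSL(2,\R)$ whose convex core is a pair of pants with the three boundary geodesics represented by $X$, $Y$, and $XY$. All three boundary elements are hyperbolic, so every point of $\mc{S}_2(\R)$ satisfies $\abs{\tr X},\abs{\tr Y},\abs{\tr XY}>2$ and therefore lands in one of the eight sign-orthants; it remains to pin down which four occur, to show they are swept out completely, and to show $\mc{S}_2(\R)$ is relatively clopen in $U$.

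The crux is the sign relation $\operatorname{sign}(\tr X\,\tr Y\,\tr XY)=-1$ for a coherently oriented pair of pants, which I would obtain constructively. I build the pants from two isometric right-angled hexagons with alternating (non-seam) side lengths $\ell_1/2,\ell_2/2,\ell_3/2$; the holonomy is discrete and faithful, and in its canonical $\SL(2,\R)$-lift the boundary element of length $\ell_i$ has trace $-2\cosh(\ell_i/2)$, realising an all-negative representative $(x_0,y_0,z_0)$. A pants class has exactly four $\SL(2,\R)$-lifts, obtained by negating the lift of $X$, of $Y$, or of both; their trace triples are $(\epsilon_X x_0,\epsilon_Y y_0,\epsilon_X\epsilon_Y z_0)$ with $(\epsilon_X,\epsilon_Y)\in\{\pm1\}^2$, which starting from the all-negative representative are precisely the four listed patterns $(-,-,-),(-,+,+),(+,-,+),(+,+,-)$. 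Since the trace coordinates record a choice of lift, every pants point lies in one of these four orthants and each orthant is met; in particular no lift of a pants class has positive trace-product. (As a cross-check, $\tr[X,Y]=\tr^2X+\tr^2Y+\tr^2XY-\tr X\,\tr Y\,\tr XY-2>2$ throughout $U$, so the commutator is hyperbolic, consistent with a pants rather than the one-holed-torus relation $\tr[X,Y]\leq-2$.) Thus $\mc{S}_2(\R)\subseteq U$ and $\mc{S}_2(\R)$ meets every component of $U$.

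To conclude that $\mc{S}_2(\R)$ is a union of connected components of $U$, I argue it is relatively clopen. It is open in $\R^N$ because classical Schottky (ping-pong) groups are structurally stable, so sufficiently small real deformations remain discrete, free, and of pants type. It is relatively closed in $U$: if a sequence of pants groups converges to a representation lying in $U$, then by the J\o rgensen--Chuckrow theorem the algebraic limit is again discrete and faithful, and since all three boundary traces remain $>2$ in absolute value throughout $U$ no boundary curve can degenerate to a parabolic or elliptic, so the limit is again a pants group. Each orthant is convex, hence connected, and meets $\mc{S}_2(\R)$; being clopen, $\mc{S}_2(\R)$ contains each orthant entirely, so $\mc{S}_2(\R)=U$ and the four orthants are precisely its connected components. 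Finally, for the cusp statement I let $\ell_1,\ell_2,\ell_3\to0$ inside a fixed orthant: the pants degenerates to a thrice-punctured sphere and the holonomy converges to the $(\infty,\infty,\infty)$-triangle group, a maximal cusp on $\partial\mc{S}_2$ sitting at the corner $(\pm2,\pm2,\pm2)$ of the closed orthant, so the closure of each component meets a cusp.

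I expect the main obstacle to be the sign relation together with the control of limits: verifying that the hexagon construction realises exactly the all-negative pattern (rather than some other lift), and checking that within $U$ algebraic limits of pants groups cannot jump to a different topological type. Once these are secured, the structural-stability, connectedness, and cusp steps are routine.
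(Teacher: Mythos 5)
Your proposal is correct in substance, but it takes a genuinely different route from the paper's proof, which is much terser. The paper simply \emph{asserts} the containment of $\mc{S}_2(\R)$ in the union of the four orthants; its real content is a closedness argument phrased dynamically: along an analytic path in the Fuchsian locus, exiting the locus forces a new parabolic, and a bespoke limit-set argument (colliding fixed points, shrinking arcs of discontinuity, limit points unable to pass one another on the circle) shows that the element which pinches must be one of $X$, $Y$, $XY$; hence no path can leave the locus while the defining inequalities persist, so the locus is a union of components. You replace that bespoke argument with J\o rgensen--Chuckrow plus the observation that inside the region $U$ the algebraic limit cannot change topological type (the sign of $\tr X\,\tr Y\,\tr XY$ together with $\tr[X,Y]>2$ excludes the one-holed torus, and the trace bounds exclude pinching), and you supply two things the paper omits: a justification of the containment and of exactly which four orthants occur (right-angled hexagons plus the $(\Z/2)^2$ sign-change action on $\SL(2,\R)$-lifts), and an explicit verification of the cusp claim via $\ell_i\to 0$. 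Your convexity step even yields the stronger conclusion $\mc{S}_2(\R)=U$, i.e.\ the orthants are entirely filled rather than merely met. What each buys: the paper's pinching argument is self-contained hyperbolic geometry and is recycled nearly verbatim for \cref{prp:schottky_rays}, where the essential point is precisely that degeneration is detected by the distinguished boundary words; yours is shorter, rests on standard theorems, and proves more. Two caveats. First, you share with the paper an unexamined assumption, inherited from the preceding example, that every Fuchsian Schottky point is of pants type: purely hyperbolic free rank-$2$ Fuchsian groups can also uniformise one-holed tori, and those characters lie in the product-positive orthants, so this reduction deserves an explicit sentence in either proof (or a convention restricting $\mc{S}_2(\R)$ to the pants locus). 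Second, your closedness step silently upgrades convergence of characters to algebraic convergence of representative homomorphisms; this is harmless here because $\tr[X,Y]>2$ throughout $U$ forces irreducibility, but it should be said.
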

\begin{proof}
  The set $ \mc{S}_2 \inter \R^3 $ is a subset of the union of the four semi-algebraic sets. Consider an analytically parameterised path $ G(t) $
  through $ \mc{S}_2 \inter \R^3 $; for this path to reach the boundary, since finitely generated Fuchsian groups are geometrically finite, a new
  parabolic must appear in the group, say when $ t = 0 $; we will show that if any word becomes parabolic, then necessarily one of $ X $, $ Y $, or $ XY $ becomes
  parabolic. Suppose that $ W $ is the word representing a geodesic that becomes parabolic, so $ \tr^2 W \to 4 $ as $ t \to 0 $, and let $ \xi,\xi' \in \Lambda(G(t)) $ be the fixed points of $W$
  so that $ \xi \to \xi' $ as $ t \to 0 $. On the circle bounding the hyperbolic disc acted on by $ G(t) $, there are two arcs bounded by $ \xi $ and $ \xi'$;
  let $ S(t) $ be the arc out of these two which contracts to the empty set when $ t \to 0 $. Since $ S(t) $ necessarily contains an attractive fixed point of an
  element $ g\in G(t) $, it must contain an arc of discontinuity of $ G(t) $ for $ t \neq 0 $ (namely, an image under a sufficiently high power of $ g $ of any boundary
  component). As $ t \to 0 $, since limit points cannot pass each other on the unit circle as this would produce either parabolics or relators in the group,
  this arc of discontinuity must also tend to length $0$ as $ t \to 0 $. In particular the two fixed points of the boundary hyperbolic bounding this arc must
  collide and at $ t = 0 $ this boundary hyperbolic, which is one of $ X $, $ Y $, or $ XY $ (or their inverses), must become parabolic. In other words, the path $ G(t) $ reaches the
  boundary exactly when one of $ X $, $ Y $, or $ XY $ becomes itself parabolic.
\end{proof}

More generally, consider a Fuchsian group $ F $ uniformising a genus $ g $ surface $S$ with $ b > 0 $ boundary components and $p$ punctures. This group
can be viewed as a Kleinian group uniformising a single surface with $ p $ rank $1 $ cusps and genus $ g + b-1 $, obtained by `doubling' $ S $ across its
boundary and punctures. The Teichm\"uller space $ T(S) $ embeds into the character variety $ X(F) $ as a subset of the quasiconformal deformation
space $ \QH(F) $. By a series of ideas originating with Fricke and studied extensively by (for example) Keen~\cite{keen71,keen73}, we can choose a
generating set for $ F $, say $ f_1,\ldots,f_N $, such that the traces of all elements of $F$ are $\Z$-polynomial in the traces of all words in these generators;
more precisely, we can take the elements $ f_1,\ldots,f_N $ to be the set of all words in $ F $  which can be written in the usual surface-group generators
with word-length less than some universal bound depending on $ g $, $ b $, and $ p $. That is, we can choose an embedding of $ X(F) $ into $ \C^N $ for
some large $ N $ such that a representation $ \rho \in X(F) $ is Fuchsian if and only if the $\C^N$-coordinates of $ \rho $ are real.

Let $ \mc{T}_{g,b,p} $ be the set $ \QH(F) $ and let $ \mc{T}_{g,b,p}(\R) $ be the set $ T(S) $; for example, $ \mc{S}_g = \mc{T}_{0,g+1,0} $. A proof
of the following result may be found in Saito~\cite[\S 6]{saito94f}:
\begin{prp}\label{prp:schottky_rays}
  The locus $ \mc{T}_{g,b,p}(\R) \subset \C^N $, where $ \C^N $ is coordinatised by the traces $ \tr f_j $ as just described,
  is a semi-algebraic set. \qed
\end{prp}
In later sections of the current paper we will be restricting to maximal laminations, and in particular every F-peripheral group will be genus $2$. Hence the only
explicit inequalities which will be interest are those for the special case which we gave the details of in \cref{lem:schottky_pleating_var} above; computing them
in general is complicated (for the special case of a compact surface, i.e.\ $ b = p = 0 $, see Komori~\cite[Theorem~4.2]{komori97}) so since we do not need them in
further cases we will not work them out.

\begin{rem}
  We will end up working with real subvarieties of $ \mc{T}_{g,b,p}(\R) $ obtained by restricting the hyperbolic lengths of various
  boundary components to be equal to hyperbolic lengths of other boundary components. Of course, the hyperbolic length is not algebraic, but
  we can set up algebraic conditions to force lengths to be equal by fixing the trace of the corresponding elements to be equal. In reality, all these groups with identified boundary components
  will be embedded into the overgroup $ G $ in such a way that the elements whose traces are to be equal are conjugate in $ G $, and so the
  fact that we end up with proper subvarieties of the Teichm\"uller space will be induced by the global combinatorics and we will not need to
  impose it by hand.
\end{rem}

Returning to our general setup, let $ G $ be a convex cofinite manifold group with $ \Omega(G) \neq \emptyset $ and let $ \Lambda $ be a rational lamination
on $ \Omega(G)/G $. Define the groups $ \Pi_{i,j} \leq G $ as in previous sections, where $ i $ ranges from $ 1 $ to $ n $ (the number of components of the
boundary when rank $1$ cusps are plumbed in) and where $ j $ ranges from $ 1 $ to $ m_i $ (the number of components of the complement of $ \Lambda_i $ on
the boundary surface $ S_i $). We do not assume that the $ \Pi_{i,j} $ form a circle chain, but it is true that each of these groups is quasi-Fuchsian and
so acts on some quasidisc $ \Delta_{i,j} $ (possibly not contained within $ \Omega(G) $); let $ (g_{i,j}, b_{i,j}, p_{i,j}) $ be the respective genus, number
of boundary components, and number of punctures of $ \Delta(\Pi_{i,j})/\Pi_{i,j} $. To save space we will write $ \mc{T}^{(i,j)} $ for $ \mc{T}_{g_{i,j}, b_{i,j}, p_{i,j}} $.

\begin{defn}
  We define three product spaces:
  \begin{displaymath}
    \mathbold{X}_\Lambda \coloneq \prod_{i=1}^n \prod_{i=1}^{m_i}  X(\pi_1(P_{i,j})),\quad
    \mathbold{T}_\Lambda \coloneq  \prod_{i=1}^n \prod_{i=1}^{m_i} \mc{T}^{(i,j)},\quad
    \mathbold{T}_\Lambda(\R) \coloneq \prod_{i=1}^n \prod_{i=1}^{m_i} \mc{T}^{(i,j)}(\R).
  \end{displaymath}
  Clearly $ \mathbold{X}_\Lambda \supset \mathbold{T}_\Lambda \supset \mathbold{T}_\Lambda(\R) $. Define
  also the \df{multicharacter map}
  \begin{displaymath}
    \mathbold{\chi}_\Lambda : X(G) \to \mathbold{X}_\Lambda
  \end{displaymath}
  by taking as its components the canonical maps
  \begin{displaymath}
    \rho \ni X(G) \mapsto {\rho\!\restriction_{\Pi_{i,j}}} \circ p \in X(\pi_1(P_{i,j})).
  \end{displaymath}
  where $ p $ is the canonical map $ \pi_1(P_{i,j}) \to \Pi_{i,j} $.
\end{defn}

If $ \rho \in X(G) $ is such that $ \rho(G) $ has a $ \Lambda$-circle chain, then $ \mathbold{\chi}_\Lambda(\rho(G)) \in \mathbold{T}_\Lambda(\R) $.
By \cref{prp:schottky_rays}, this implies that $ \mc{P}_\Lambda \subset X(G) $ is identified with a holomorphic embedding (since $ \rho \mapsto \tr \rho(W) $ is a holomorphic map on $ X(G) $)
of a subset of certain branches of the inverse image under $ \mathbold{\chi}_\Lambda $ of $ \mathbold{T}_\Lambda(\R) $. We next identify the exact nature of this subset.

Firstly, one must choose the correct asymptotic branch of $ \mathbold{\chi}_\Lambda^{-1}(\mathbold{T}_\Lambda(\R)) $ to ensure that one hits the pleating ray (otherwise
one ends up with elementary groups or perfectly good Fuchsian subgroups which are not peripheral, likely embedded in an indiscrete supergroup); see for more context
the discussion in the introduction to Parker and Series~\cite{parkerseries95}. The correct branches are those whose closures meet the maximal cusp corresponding to the lamination, and
this point is can be found by solving the trace equations and then selecting the solutions which have a Ford domain giving the correct topological type.

Problems also arise once the bending angle along one of the leaves of the lamination becomes $ \pi $; this happens when the discs preserved by the two incident F-peripheral
groups merge into a single disc, so that the group generated by the union of both groups is also F-peripheral (c.f.\ \cref{ex:non_maximal}). Moving past this point
gives a concave angle between the preserved discs, the discs are no longer peripheral, and their convex hull boundaries do not project to the convex core boundary. We need the
following technical result to detect this situation.

\begin{prp}\label{cor:incidence_inequalities}
  Let $ G $ be a convex cofinite manifold group with a $\Lambda$-circle chain $ \mathbold{\Pi} = \{\Pi_{i,j}\} $. Let $ t \mapsto G_t $ ($t \in [0,1]$) be an algebraically parameterised curve
  in $ X(G) $ with $ G_0 = G $, such that for all $ t $ the images of the $ \Pi_{i,j} $ continue to be Fuchsian. There exist rational inequalities in $ t $ that detect
  the collision of the peripheral discs of the quasiconformal deformations of the F-peripheral groups in $ \mathbold{\Pi} $.
\end{prp}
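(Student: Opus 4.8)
The plan is to realise the boundary of each peripheral disc as a circle that can be read off real-algebraically from the group, and then to detect the merging of two adjacent discs by computing the inversive product of the two circles, which is a M\"obius-invariant real-algebraic function of $t$ whose degenerate value records the bending angle along the leaf reaching $\pi$. The key point is that a round circle in $\hat{\C}$ is the zero locus of a Hermitian form $z^\ast S z = 0$ for a nonzero Hermitian matrix $S$ (real diagonal, conjugate off-diagonal entries) with $\det S < 0$, and a M\"obius transformation $M$ carries the circle of $S$ to the circle of $(M^{-1})^\ast S M^{-1}$. Since by hypothesis each $\Pi_{i,j}$ stays Fuchsian along the curve, it preserves its unique peripheral circle $\partial\Delta(\Pi_{i,j})$ (see \cref{lem:strongly_peripheral}), so the corresponding form $S_{i,j}$ is characterised up to real scale by the real-linear equations $A^\ast S_{i,j} A = S_{i,j}$ ranging over a generating set $A$ of $\Pi_{i,j}$ (the scalar is forced to be $+1$ because $\det A = 1$ and each generator preserves the disc, not merely its boundary). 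For a non-elementary group this system on the four-real-dimensional space of Hermitian matrices has a one-dimensional kernel, so Cramer's rule produces a distinguished solution $S_{i,j}$ whose entries are polynomials in the matrix entries of the generators and their complex conjugates. As the generators of $\Pi_{i,j}$ are words in those of $G_t$, whose entries vary $\R$-algebraically in the \emph{real} parameter $t$, and since conjugating a polynomial in $t$ with complex coefficients merely conjugates those coefficients, the entries of $S_{i,j}$ are real-algebraic functions of $t$.

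Next, for each leaf $\lambda$ of $\Lambda$ joining two F-peripheral groups $\Pi = \Pi_{i,j}$ and $\Pi' = \Pi_{i,j'}$ of the circle chain, their common boundary-parallel element $W$ representing $\lambda$ has two fixed points lying on both $\partial\Delta(\Pi)$ and $\partial\Delta(\Pi')$, so these circles always cross. Regarding $\det$ as a Lorentzian quadratic form on Hermitian matrices, its polarisation $B(S,S') = \det(S+S') - \det S - \det S'$ is a symmetric $\R$-bilinear form, and the inversive product of the two circles is
\[
  I(S,S') \;=\; \frac{B(S,S')}{2\sqrt{\det S\,\det S'}},
\]
which (up to the standard sign convention) is the cosine of the angle at which the two circles meet. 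This is a M\"obius invariant, hence insensitive to the square-root-laden normalisation of the forms and to the choice of conjugacy representative used for the circle chain element. Because $B$, $\det S$, and $\det S'$ are polynomials in the entries of $S,S'$, the quantity $I(S,S')^2$ is a ratio of polynomials in $t$.

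It remains to match the collision with a value of this invariant. In the convention of the present paper the pleating angle is the interior angle of the lens $\Delta(\Pi)\cap\Delta(\Pi')$; it runs from $0$ at the cusp, where the circles are tangent ($I = +1$), to $\pi$ at the collision, where the two discs merge into a single disc and the circles coincide with opposite orientation ($I = -1$). Thus the genuinely peripheral configurations are exactly those with $I(S,S') > -1$, and setting
\[
  D(t) \;=\; B(S,S')^2 - 4\,\det S\,\det S'
\]
the collision along $\lambda$ is the locus where $D(t) = 0$ with $B(S,S') < 0$, while the valid open region is described by the condition $B(S,S') \geq 0$ together with $D(t) < 0$. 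These are polynomial conditions on $t$ (equivalently, sign conditions on rational functions of $t$) with coefficients built from those of the parameterisation. Taking the conjunction over all leaves of $\Lambda$ detects every disc collision, which is the assertion of the proposition.

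The main obstacle is the orientation bookkeeping in the last step: the bare inversive product only sees the unoriented crossing angle, and the two degenerations $I = +1$ and $I = -1$ are geometrically distinct events — the former is the maximal cusp on $\partial\QH(G)$, the latter the disc-merge we wish to detect — yet they become indistinguishable after the squaring used to clear the square root. This is precisely why the plan carries along the \emph{sign} of $B(S,S')$ as an auxiliary polynomial datum: since $\det S,\det S' < 0$ the sign of $B$ equals the sign of $I$, so it separates the approach to $-1$ from the approach to $+1$ and guarantees that $D(t)$ is not conflated with the cusp degeneration. A secondary, more routine point is to check that the invariance linear system for $S_{i,j}$ stays rank-three (one-dimensional kernel) along the whole curve, so that the Cramer's-rule solution does not vanish identically; this holds because each $\Pi_{i,j}$ remains non-elementary Fuchsian by hypothesis, and the resulting polynomial $S_{i,j}(t)$ then extends across any isolated coincidental degeneracy of the minors by continuity.
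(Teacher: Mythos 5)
Your skeleton is sound and genuinely different in implementation from the paper's own proof: the paper recovers each peripheral circle by Cramer's rule applied to the ``circle through three points'' system, the three points being consistently chosen fixed points of group elements (hence algebraic in $t$ only after quadratic extensions), and then detects degeneration by inequalities on the radius and centre; you recover the circle as the invariant Hermitian form of the Fuchsian image, by Cramer's rule applied to the invariance system $A^\ast S A = S$, and detect degeneration by the inversive product. Your route has the real advantage that no fixed points (hence no square roots) are needed, so the output is honestly polynomial in the matrix entries and their conjugates, and your rank-three claim for the invariance system is correct for non-elementary Fuchsian groups. But the orientation bookkeeping, which you yourself flag as the main obstacle, is mishandled in two concrete ways.

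First, the sign of $B(S,S')$ is not intrinsic: $B$ is bilinear while $\det(-S)=\det S$, so $I(-S,S') = -I(S,S')$, and M\"obius invariance only gives insensitivity to \emph{positive} rescalings. The Cramer solution fixes a sign algebraically, not geometrically, and that sign can even flip as $t$ crosses an isolated zero of the chosen minors; so the criterion ``$D=0$ and $B<0$'' is not meaningful until you normalise each form geometrically (say, require the disc to be the negative set of its form at $t=0$, divide the polynomial solution by its content, and propagate the sign by continuity) --- a step your write-up omits. Relatedly, your parenthetical is backwards: at the merge the two discs converge to the \emph{same} disc, so the disc-normalised forms become \emph{positively} proportional, $S' = \lambda S$ with $\lambda>0$, and the value $I=-1$ comes from $B(S,\lambda S) = 2\lambda\det S < 0$ against the positive square root; oppositely oriented coincident forms would give $I=+1$. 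Second, your description of the valid open region as ``$B \geq 0$ together with $D<0$'' is wrong: it excludes obtuse bending angles $\theta \in (\pi/2,\pi)$, where $I = \cos\theta < 0$, yet these are perfectly valid configurations --- the bending angle along a leaf runs all the way from $0$ at the cusp to $\pi$ at the merge, and the correct open crossing condition is just $D<0$. A clean repair that removes orientation from the picture entirely: the collision is exactly real proportionality of $S$ and $S'$, i.e.\ the vanishing of all $2\times 2$ minors of the $4\times 2$ real coordinate matrix of the pair $(S,S')$; this is polynomial in $t$, sign-free, and cannot be confused with the cusp, since tangent circles are never proportional. With these corrections your argument does prove the proposition.
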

\begin{proof}
  For each group $ \Pi_i(t) $ take $ u_1(t),u_2(t),u_3(t) $ to be an arbitrary choice of fixed points of three fixed elements varying with $ t $ (e.g.\ for a maximal lamination one
  might take fixed points from the elements representing the three holes of the quotient surface); the choice of one or the other of the two fixed points in the case of a hyperbolic
  element is to be taken consistently for all $ t $, so the fixed points are algebraic in $t$. By Cramer's Rule, the coefficients of the quadratic equation defining the peripheral
  disc of $ \Pi_i(t) $ are algebraic in the $ u_i(t) $, so long as the $ u_i $ are not colinear (which is always true after a suitable conjugation). Now for each pair $ \Pi_i(t) $ and $ \Pi_j(t) $
  we may take inequalities in the radius and centre of the corresponding peripheral discs (which are algebraic in the coefficients of the defining quadratic) to detect whether the
  two discs intersect or coincide.
\end{proof}

\begin{rem}
  In fact, the parameterisation is allowed to be of higher dimension. Arbitrary deformations in $ X(G) $ do not preserve
  Fuchsian groups, but if $ \Lambda $ is maximal in a Schottky group of genus $g$ (for instance) then, heuristically, the pleating variety $ \mc{P}_\Lambda $ has real dimension $ \frac{1}{2} \dim_{\R} X(G) $
  (since one is imposing realness conditions on the traces of $ 3g-3 $ variables, i.e.\ on a full set of coordinate functions for $ X(G) $).
\end{rem}

\begin{thm}\label{thm:variety_is_variety}
  The $\Lambda$-pleating variety $ \mc{P}_{\Lambda} \subset X(G) $ for a convex cofinite manifold group $ G $ and lamination $ \Lambda $ is a real semi-algebraic set.
  If a $\Lambda$-circle chain $ \{\Pi_{i,j}\} $ is chosen, then $ \mc{P}_\Lambda $ is obtained as the intersection of the feasible regions of the three sets of inequalities listed
  in (1)--(3) below, together with an additional inequality that selects the connected components of the result which meet the cusp on $ \partial \QH(G) $ indexed by $ \Lambda $
  (which may be chosen to be polynomial, because connected components of semi-algebraic sets can be separated by polynomial inequalities~\cite[Theorem~5.21]{basu}).
  \begin{enumerate}
    \item Every $ \Pi_{i,j} $ must have trace parameters satisfying the real semi-algebraic conditions in \cref{prp:schottky_rays}; that
          is, $ \mc{P}_\Lambda \subset \mathbold{\chi}_\Lambda^{-1}(\mathbold{T}_\Lambda(\R)) $.
    \item For every pair of peripheral groups in the circle chain which are incident modulo the conjugation action of $ G $ (i.e.\ have lifts which are incident in $ \FPer(G) $), an inequality
          given by \cref{cor:incidence_inequalities} must be satisfied that ensures that the two peripheral discs intersect but do not coincide.
%     \item For every pair of peripheral groups which are not incident modulo the conjugation action, an inequality given by \cref{cor:incidence_inequalities} must be satisfied that ensures that
%           the two peripheral discs are strictly disjoint.
  \end{enumerate}
\end{thm}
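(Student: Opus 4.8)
The plan is to read off semi-algebraicity directly from the inequalities that have already been assembled, and then to prove the asserted set equality by two inclusions, putting the bulk of the work into the reverse inclusion, where one must upgrade ``Fuchsian with the correct disc combinatorics'' to ``genuinely peripheral''. Semi-algebraicity is immediate: condition~(1) is semi-algebraic by \cref{prp:schottky_rays}, and conditions~(2) and~(3) are semi-algebraic by \cref{cor:incidence_inequalities}, since each reduces to sign conditions on the radii and centres of the peripheral discs, which are rational in the chosen fixed points and hence in the trace parameters. A finite intersection of semi-algebraic sets is semi-algebraic, selecting the connected components whose closure meets the distinguished cusp is again semi-algebraic by \cref{lem:connectedness_is_semi}, and pulling back along the holomorphic map $\mathbold{\chi}_\Lambda$ preserves semi-algebraicity over $\R$ because the trace functions are polynomial in the parameters. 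This settles the first assertion and reduces the theorem to the set equality.

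For the inclusion $\mc{P}_\Lambda \subset$ (feasible region), suppose $\rho(G)$ has a $\Lambda$-circle chain $\{\Pi_{i,j}\}$. By \cref{defn:chain} each $\rho(\Pi_{i,j})$ is maximal $F$-peripheral, hence Fuchsian of the prescribed signature, so its restriction lands in $\mathbold{T}_\Lambda(\R)$ and condition~(1) holds. The incidence data of the $\rho(\Pi_{i,j})$ agrees with that of the $\Pi_{i,j}$, so incident pairs have peripheral discs tangent at the fixed points of the boundary-parallel element dual to the shared leaf, without coinciding (condition~(2)), while non-incident pairs preserve disjoint discs (condition~(3)). The distinguished cusp is the maximal cusp obtained by pinching every leaf of $\Lambda$; since $G$ itself carries a $\Lambda$-circle chain, the pleating deformation shrinking all bending angles to zero gives a path inside the feasible region from $\rho$ to that cusp, so $\rho$ lies in the selected component.

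The substance is the reverse inclusion. Given $\rho$ in the selected component satisfying~(1)--(3), condition~(1) makes each $\rho(\Pi_{i,j})$ Fuchsian of the correct topological type, preserving \emph{some} disc $\Delta_{i,j}$, and conditions~(2)--(3) force these discs to realise the incidence pattern of $\FPer(G)$. What remains is to verify that the discs are genuinely \emph{peripheral}---that each $\Delta_{i,j}$ is a component of $\Omega(\rho(G))$ and each $\rho(\Pi_{i,j})$ is maximal. The ``do not coincide'' clause of condition~(2) rules out the merging of adjacent discs at bending angle $\pi$, which is exactly the maximality failure discussed before \cref{cor:incidence_inequalities}. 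Peripherality I would establish by a connectedness argument anchored at the cusp: there the assembled discs tile the boundary, and the gluing machinery of \cref{lem:strongly_peripheral,lem:neilsen_region_is_fd,lem:pl_ray_is_glued_discs} identifies the union of the quotient Nielsen regions with the convex core boundary of $\H^3/\rho(G)$, so every group is peripheral. Since peripherality of a disc is an open condition that can fail only when two discs collide or a disc escapes $\Omega(\rho(G))$, and these events bound the feasible region of~(2)--(3), peripherality propagates throughout the selected component; hence the $\rho(\Pi_{i,j})$ form a $\Lambda$-circle chain and $\rho \in \mc{P}_\Lambda$.

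The hard part will be making this propagation airtight, that is, ensuring that no ``bad branch''---Fuchsian but non-peripheral subgroups, or an indiscrete ambient group---creeps into the selected component. This is precisely the asymptotic-branch phenomenon flagged before the statement, and the resolution is that any such degeneration must either alter the disc-incidence combinatorics (violating~(2) or~(3)) or push a disc out of $\Omega(\rho(G))$, which, by the convex-core identification at the cusp, cannot occur without first violating a tangency inequality. Care is needed because peripherality is genuinely global: it refers to $\Omega(\rho(G))$ rather than to the individual Fuchsian subgroup, so the continuity argument must track the full limit set $\Lambda(\rho(G))$ and not merely the separate quotients $\Delta_{i,j}/\rho(\Pi_{i,j})$.
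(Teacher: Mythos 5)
Your proposal is correct and follows essentially the same route as the paper: the forward inclusion is treated as immediate from the definitions, and the reverse inclusion is an open--closed propagation argument inside the selected connected component, with the stability of \cref{lem:pl_ray_is_glued_discs} (supported by \cref{lem:strongly_peripheral,lem:neilsen_region_is_fd}) supplying the closedness that rules out loss of peripherality or discreteness. The only real difference is cosmetic: you anchor the propagation at the maximal cusp, which lies on the \emph{boundary} of the feasible region (the strict trace inequalities of \cref{prp:schottky_rays} degenerate to equalities there, so one must open up the cusp to get started), whereas the paper anchors at an interior point already known to lie on the pleating variety---e.g.\ $G$ itself, which the theorem's hypothesis provides---and both versions leave the same connectedness and cusp-identification details at a comparable level of rigour.
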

\begin{proof}
  Let $ S \subset X(G) $ be the set of points satisfying the claimed inequalities. If a parameter for a group lies on $ \mc{P}_\Lambda $ then clearly it satisfies the given inequalities,
  i.e. $ \mc{P}_\Lambda \subseteq S $. To see that $ \mc{P}_\Lambda $ is a union of connected components, we run an open-closed argument  similar to those in Keen and Series~\cite[\S\S3--4]{keen94}
  and Komori and Series~\cite{komoriseries98}.

  Suppose we fix a point in $ \mc{P}_\Lambda $, and then deform slightly away from it while keeping the inequality conditions satisfied. By \cref{prp:schottky_rays} every F-peripheral group remains F-peripheral
  and hence by \cref{lem:pl_ray_is_glued_discs} the new group remains on the pleating ray. Thus $\mc{P}_\Lambda$ is open in $ S $.

  To see that $\mc{P}_\Lambda$ is closed in $S \inter \QH(G) $ and thus in $ S $, suppose that $ (G_i)_{i=1}^\infty $ is any sequence of groups with a $\Lambda$-circle chain; equivalently, the corresponding
  manifolds have bending lamination $\Lambda$. By continuity of bending laminations~\cite{keenseriesCCHB} this sequence of $3$-manifolds either has bending lamination $ \Lambda $, or is a cusp group
  on $\partial \QH(G) $, or one of the leaves of $\Lambda$ has flattened
  to angle $\pi $. Thus the limit $ \lim_{i\to\infty} G_i $ either has a $\Lambda$-circle chain (so lies in $ \mc{P}_\Lambda$), or one of the dihedral angles between two peripheral discs degenerates
  to $0$ (corresponding to replacing an inequality in condition (1) of the theorem statement with an equality), or one of the angles degenerates to $\pi$ and the two discs coincide (corresponding to
  replacing an inequality in condition (2) of the theorem statement with an equality). In other words, if a sequence of groups in $ \mc{P}_\Lambda $ converges to a group in $S$, then it converges
  to a group in $ \mc{P}_\Lambda $.
\end{proof}

\section{Formal curvilinear polygons with side-pairing structures}\label{sec:formaldomains}
Our main theorems involve the algebraic movement of fundamental domains for certain Kleinian groups. We therefore require an algebraic structure which parameterises the particular class of fundamental
domains that we construct. The key observation is that we can decouple the combinatorial side-pairing structure from the geometric information associated with a fundamental domain:
the combinatorial structure is just a set of edges and a set of M\"obius transformations which pair them, and the geometric information consists of intersection data that cuts out
subspaces of possible combinatorial structures.

\subsection{Generalities on algebraic geometry of circles}
\begin{defn}[Projective spaces of circles]
  The space of circles in $ \hat{\C} $ is identified with a subset of $ \P^3 $~\cite[Vol. II,~\S 20.1]{berger}.\footnote{Throughout this paper, all projective spaces $\P^r$ are real projective spaces.}
  Having already chosen an affine coordinate system for the Riemann sphere, we define this identification by
  \begin{displaymath}
    \V(k (x^2+y^2) + (ax + by) + h) \mapsto [k:a:b:h]
  \end{displaymath}
  where $ \V $ is the usual map sending a polynomial to its corresponding affine hypersurface. The radius-squared and centre of
  a circle ($k \neq 0 $) are algebraic in its homogeneous coordinates, with formulae
  \begin{displaymath}
    \rad^2\, [k:a:b:h] = \frac{a^2+b^2 - 4kh}{4k^2} \;\text{and}\; \cen\, [k:a:b:h] = \left( -a/2k, -b/2k \right).
  \end{displaymath}
  The inner product corresponding to the quadratic form $ \rad^2 $ may be used to compute angles between circles. The formalism also includes points (as circles of radius $0$) along
  with a copy of $ \H^3 $ representing circles with purely imaginary radius~\cite[Vol. II,~20.2.5]{berger}.

  There is a unique $ \P^1 $ passing through any pair of points $ c,c' \in \P^3 $ called the \df{pencil} spanned by $ c $ and $ c' $; we denote this line by $ \langle c, c' \rangle $.
  If $ g \in \PSL(2,\C) $ does not fix $ \infty $, then we define $ \Pen(g) $ to be the pencil in $ \P^3 $ spanned
  by the two isometric circles of $ g $. This definition does not suffice if $ g $ is order $2$, but if $ g $ is elliptic then $ \Pen(g) $
  is the pencil of circles passing through the two fixed points of $ g $ and this definition includes the order $2$ case. If $ g $ is non-elliptic,
  then the fixed points of $ g $ lie in the pencil as limits of circles, and form a distinguished frame for the pencil (they are the only two points
  on $ \Pen(g) $ in the nullspace to $ \rad^2 $). If $ g $ is elliptic or parabolic then there is no canonical frame for $ \Pen(g) $.

  If $ h \in \PSL(2,\C) $ then the isometric circles of $ hgh^{-1} $ (if they exist) are \emph{not} the $ h$-translates of the isometric circles
  of $ g $. However, the pencil $ \Pen(hgh^{-1}) $ is the $ h$-translate of the pencil $ \Pen(g) $: the images under $ h $ of the isometric circles of $ g $
  give a frame of $ \Pen(hgh^{-1}) $ that is different to the frame of isometric circles of $ hgh^{-1} $.
\end{defn}

\begin{rem}
  To save space, often we will state results and give proofs only in the `generic' situation that the relevant circle configuration is finite (i.e.\ $\infty$ is
  not a point on any circle). The cases where Euclidean lines are allowed may be obtained as limiting cases, or by changing to a different affine chart.
\end{rem}

For computational purposes the following lemma is useful; it explicitly describes the action of a matrix $ g \in \PSL(2,\C) $ on $ \Pen(g) \subset \P^3$ without
passing explicitly through the representation $ \PSL(2,\C) \to \PO(3,1) $.
\begin{lem}\label{lem:action_on_circles}
  If $ g \in \PSL(2,\C) $ is loxodromic, and if coordinates are chosen on $ \Pen(g) \cong \P^1 $ so that $ 0 $ is the repelling fixed point, $ \infty $ is the attracting fixed point
  of $ g $, and $ 1 $ is the isometric circle of $ g $ that contains the repelling fixed point, then $ g $ acts on $ \Pen(g) $ by $ g(c) = \lambda c $
  where $ \lambda \in \C $ is the coordinate of the attracting isometric circle of $ g $. If $ g $ is parabolic and coordinates are chosen on $ \Pen(g) \cong \P^1 $
  so that $ \infty $ is the fixed point of $ g $ and the isometric circles of $ g $ are $ 0 $ and $ 1 $, then $ g $ acts in these coordinates by $ g(c) = c+1 $.
\end{lem}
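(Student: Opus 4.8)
The plan is to exploit that $ g $ acts on the entire space of circles $ \P^3 $ by the projective action induced from $ \PSL(2,\C) \to \PO(3,1) $, together with the fact that $ g $ preserves its own pencil. Since $ \Pen(hgh^{-1}) = h\cdot\Pen(g) $ for every $ h $, taking $ h = g $ gives $ g\cdot\Pen(g) = \Pen(g) $, so the restriction of the projective action to the invariant line $ \Pen(g) \cong \P^1 $ is a projective automorphism, i.e.\ a M\"obius transformation $ M $ of $ \P^1 $. The strategy in both cases is then to pin down $ M $ by locating three marked points of $ \Pen(g) $ and their images, using that a M\"obius transformation of $ \P^1 $ is determined by its action on three points.

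For the loxodromic case I would first identify the two fixed points of $ M $. The two fixed points of $ g $ on $ \hat{\C} $ sit inside $ \Pen(g) $ as its only two null circles (the radius-zero members), and since $ g $ fixes these points of $ \hat{\C} $ it fixes the corresponding degenerate circles; thus $ M $ fixes the marked points $ 0 $ (repelling) and $ \infty $ (attracting). A M\"obius map fixing $ 0 $ and $ \infty $ has the form $ c \mapsto \mu c $. To find $ \mu $ I would use the standard identity $ g(I_g) = I_{g^{-1}} $: the isometric circle $ I_g $ of $ g $, whose disc contains the repelling fixed point, is the marked point $ 1 $, and it is sent to the isometric circle $ I_{g^{-1}} $ of $ g^{-1} $, whose disc contains the attracting fixed point and which by definition carries the coordinate $ \lambda $. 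Hence $ \mu = M(1) = \lambda $, so $ g $ acts by $ c \mapsto \lambda c $.

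The parabolic case follows the same template: $ M $ fixes the unique null circle of $ \Pen(g) $ (the single fixed point of $ g $ on $ \hat{\C} $), which is the marked point $ \infty $, and it sends one isometric circle to the other, so $ 0 \mapsto 1 $. Writing $ M(c) = \alpha c + \beta $ gives $ \beta = 1 $ immediately; the content is to show $ \alpha = 1 $, equivalently that $ M $ is parabolic rather than merely affine. This is the main obstacle. I would argue that $ M $ has no second fixed point on $ \Pen(g) $: conjugate $ g $ by some $ h $ carrying its fixed point $ \zeta $ to $ \infty \in \hat{\C} $, so that $ hgh^{-1} $ is the translation $ w \mapsto w+1 $. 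Since the two isometric circles of a parabolic are tangent at $ \zeta $, the pencil $ h\cdot\Pen(g) $ becomes a family of mutually parallel lines, and because $ w\mapsto w+1 $ sends the line $ h(I_g) $ to the distinct parallel line $ h(I_{g^{-1}}) $, this family is transverse to the horizontal direction. The circles fixed by $ w \mapsto w+1 $ are exactly the horizontal lines, none of which lie in a non-horizontal parallel family, so the only fixed member of $ \Pen(g) $ is the null circle. A M\"obius transformation with a single fixed point is parabolic, forcing $ \alpha = 1 $ and hence $ g(c) = c+1 $.

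The only genuinely delicate point is this parabolic sub-case, where one must rule out a spurious second non-null fixed circle in the pencil; everything else is bookkeeping with three marked points and the identity $ g(I_g)=I_{g^{-1}} $. As a cross-check I would note an alternative: $ \PSL(2,\C)\to\PO(3,1) $ carries parabolics to unipotent elements, whose restriction to the invariant plane over $ \Pen(g) $ is again unipotent and nontrivial, hence a single Jordan block yielding a parabolic $ M $. I would nonetheless prefer the elementary M\"obius-geometry argument above, since it avoids the very representation that this lemma is designed to circumvent.
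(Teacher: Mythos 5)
Your proof is correct and takes essentially the same approach as the paper: the paper's entire proof is the one-line remark that $g$ acts as a projectivity of $\P^3$ preserving the line $\Pen(g)$, hence as an element of $\PSL(2,\R)$ on that $\P^1$, with the rest declared a routine calculation. Your argument is exactly that calculation carried out in full---the fixed null circles together with $g(I_g)=I_{g^{-1}}$ pin down the projectivity via three marked points---with your careful handling of the parabolic case (ruling out a second fixed circle in the pencil) being the only detail the paper leaves implicit.
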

\begin{proof}
  This is a routine calculation using the fact that $ g $ is acting as a projectivity in $ \P^3 $ restricted to a $ \P^1 $, so is acting as an element of $ \PSL(2,\R) $.
\end{proof}

\begin{rem}
  In connection with \cref{lem:action_on_circles}, it may be useful to recall that for a hyperbolic transformation represented by $ M \in \PSL(2,\C) $, the
  attracting fixed point is the projectivisation of the eigenvector of $ M $ corresponding to the eigenvalue $ \lambda $ satisfying $ \abs{\lambda} > 1 $;
  and the eigenvalues satisfy $ \lambda + \lambda^{-1} = \tr M $.
\end{rem}

A more general but less explicit version of \cref{lem:action_on_circles}:
\begin{lem}\label{lem:action_on_circles2}
  If $ g : \C^n \to \PSL(2,\C) $ is a rationally parameterised matrix, and $ \phi : \C^m \to (\operatorname{Conf}_\C(3))^k $ is a family of $k$ rationally parameterised
  triples of points, then (i) the circle through $ \phi(z) $ moves rationally in $ \P^3 $ with $ z \in \C^m $, and (ii) the images $ g(w) \circ \phi(z) $
  move rationally in terms of $ w \in \C^n $ and $ z \in \C^m $.
\end{lem}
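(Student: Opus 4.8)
The plan is to reduce both claims to the classical determinant formula for the circle through three points, together with the rationality of the M\"obius action on $\hat{\C}$; no appeal to the representation $\PSL(2,\C)\to\PO(3,1)$ is needed, although it would give an alternative route to (ii). First I would record that taking real and imaginary parts preserves rationality: writing each coordinate of $z\in\C^m$ as $z_j=s_j+it_j$ and substituting $\bar z_j=s_j-it_j$, any complex-rational function of $z$ becomes, after multiplying numerator and denominator by the conjugate of the denominator, a pair of real-rational functions (its real and imaginary parts) of the real variables $(s_j,t_j)$. Hence if $\phi$ produces a triple of distinct points $p_\ell(z)=x_\ell(z)+iy_\ell(z)$ ($\ell=1,2,3$) rationally in $z$, the real coordinates $x_\ell,y_\ell$ are real-rational in the $(s_j,t_j)$.

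For (i), the circle through these points is the nullspace of the $3\times4$ real matrix whose $\ell$th row is $(x_\ell^2+y_\ell^2,\ x_\ell,\ y_\ell,\ 1)$, since $(x,y)$ lies on $\V(k(x^2+y^2)+ax+by+h)$ exactly when this row is orthogonal to $(k,a,b,h)$. By Cramer's rule this nullspace is spanned by the vector of signed $3\times3$ minors, which are polynomials in the $x_\ell,y_\ell$ and hence real-rational in the coordinates of $z$. In the identification of the circle space with $\P^3$ fixed above, this minor vector $[k:a:b:h]$ is precisely the moving circle; since the three points are distinct the matrix has rank $3$ (two distinct circlines meet in at most two points, so there is a unique circline through them), so the minor vector is nonzero and defines a genuine point of $\P^3$ depending rationally on $z$.

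For (ii) I would transform the points first and then form the circle. The action $g(w)\cdot p=(a(w)p+b(w))/(c(w)p+d(w))$ is rational in the entries of $g(w)$ and in $p$; as these entries are rational in $w$ and the points $\phi(z)$ are rational in $z$, the transformed triple $g(w)(\phi(z))$ moves rationally in $(w,z)$ after the same real/imaginary passage. Applying part (i) to this triple shows that the circle through $g(w)(\phi(z))$ moves rationally in $(w,z)$, and because M\"obius maps carry circlines to circlines while preserving point--circle incidence, this circle coincides with $g(w)$ applied to the circle through $\phi(z)$, i.e.\ with $g(w)\circ\phi(z)$.

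The genuinely routine content is the minor formula and the rationality of the M\"obius action; the only point demanding care is the bookkeeping around degenerate configurations---points at $\infty$, collinear triples, or a finite point sent to $\infty$ by $g(w)$. Each of these is a limiting case already absorbed by the $\P^3$ formalism (a collinear triple gives the locus $k=0$, namely the line through the points, and $\infty$ is handled by a chart change), so, under the generic-case convention adopted above, no separate argument is needed and the result follows as stated.
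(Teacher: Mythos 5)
Your proof is correct and follows essentially the same route as the paper: both reduce part (i) to solving the linear incidence system for the circle coefficients via Cramer's rule (your signed-minor vector is exactly this), and both obtain part (ii) by first applying the M\"obius action to the points and then invoking part (i). Your version just makes explicit the details the paper leaves implicit---the real/imaginary bookkeeping, the rank-$3$ argument from distinctness, and the degenerate cases absorbed by the $\P^3$ formalism.
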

\begin{proof}
  Part (i) is an application of Cramer's rule to the system of equations $ \{q_i(z) = 0 : 1 \leq i \leq k \} $ where each $ q_i $ is a homogeneous quadric with
  variable coefficients; Cramer's rule shows that the coefficients of the quadrics move rationally with $ z \in \C^m $. Similarly part (ii) is an
  application of Cramer's rule to the system $ \{ q_i(g(w)(z)) = 0 : 1 \leq i \leq k \} $.
\end{proof}

An important type of result for our applications is a result guaranteeing that certain incidence conditions are semi-algebraic. We have already
seen in \cref{cor:incidence_inequalities} above that incidence between circles is semi-algebraically decidable. In addition, incidence of points on a pencil
in $ \P^3 $ is algebraic:
\begin{lem}\label{lem:line_incidence}
  The set of triples $ (x,y,z) \in (\P^3)^3 $ satisfying $ x \in \langle y,z \rangle $ is an algebraic set, cut out by equations in the homogeneous coordinates
  of $ x $, $ y $, and $ z $.
\end{lem}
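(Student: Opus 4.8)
The plan is to reduce pencil membership to a rank condition on a matrix of homogeneous coordinates, which is classically cut out by the vanishing of minors. First I would fix representatives and write $ x = [x_0:x_1:x_2:x_3] $, $ y = [y_0:y_1:y_2:y_3] $, and $ z = [z_0:z_1:z_2:z_3] $, then assemble the $ 3\times 4 $ matrix
\begin{displaymath}
  M(x,y,z) = \begin{bmatrix} x_0 & x_1 & x_2 & x_3 \\ y_0 & y_1 & y_2 & y_3 \\ z_0 & z_1 & z_2 & z_3 \end{bmatrix}.
\end{displaymath}
When $ y \neq z $ in $ \P^3 $, the pencil $ \langle y, z \rangle $ is exactly the projectivisation of the $ 2 $-plane in $ \R^4 $ spanned by any representatives of $ y $ and $ z $, so $ x \in \langle y, z \rangle $ holds precisely when the representative of $ x $ lies in that plane, i.e. when the three rows of $ M $ are linearly dependent. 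This is the statement that $ \operatorname{rank} M(x,y,z) \leq 2 $.

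Next I would make this rank condition algebraic. A $ 3\times 4 $ matrix has rank at most $ 2 $ if and only if all of its $ 3\times 3 $ minors vanish; there are $ \binom{4}{3} = 4 $ such minors, call them $ \Delta_1,\ldots,\Delta_4 $ (each obtained by deleting one column). Each $ \Delta_k $ is a polynomial in the twelve coordinates that is homogeneous of degree $ 1 $ separately in the coordinates of $ x $, of $ y $, and of $ z $, since it is multilinear in the three rows. This separate homogeneity is exactly what guarantees that the condition $ \Delta_1 = \cdots = \Delta_4 = 0 $ is independent of the chosen homogeneous representatives and therefore descends to a well-defined subset of $ (\P^3)^3 $; that subset is by construction an algebraic set.

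The only subtlety is the degenerate locus where $ y = z $, on which the pencil $ \langle y, z \rangle $ is undefined. The minor equations still make sense there, and in fact they vanish identically whenever $ y = z $ (two rows of $ M $ become proportional, forcing rank $ \leq 2 $); so the algebraic set defined by the minors agrees with the collinearity condition on the open locus $ y \neq z $ and harmlessly contains the whole degenerate locus. Since the statement only asserts membership in the pencil where the pencil is defined, I would either restrict to the complement $ \{y \neq z\} $ of an algebraic set or simply record that the minor ideal cuts out the collinearity relation, which is the content wanted. The hard part is purely bookkeeping: there is no geometric obstacle beyond the standard rank-versus-minors equivalence and the observation that each $ \Delta_k $ is separately homogeneous, both of which are immediate.
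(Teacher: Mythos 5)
Your proof is correct, and it takes a genuinely different route from the paper's. The paper reduces membership $x \in \langle y,z\rangle$ to solvability of the linear system $x = \lambda y + \mu z$, encodes $y,z$ as the columns of a $4\times 2$ matrix $A$, and expresses solvability via the orthogonal projection identity $(A(A^tA)^{-1}A^t - I_4)x = 0$. Your approach instead assembles $x,y,z$ as rows of a $3\times 4$ matrix and imposes the rank-$\leq 2$ condition via the vanishing of the four $3\times 3$ minors. Both hinge on the same underlying linear-algebra fact (membership in the span of $y$ and $z$), but your determinantal formulation has two concrete advantages: the minors are honest polynomials, multilinear in the rows and hence manifestly well-defined on $(\P^3)^3$, whereas the paper's formula involves $(A^tA)^{-1}$ and only becomes a set of polynomial equations after clearing the denominator $\det(A^tA)$ (a step the paper leaves implicit); and you explicitly treat the degenerate locus $y=z$, where the pencil is undefined — there $A^tA$ is singular and the paper's formula breaks down outright, while your minor equations simply vanish and harmlessly absorb that locus. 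The paper's formulation, in exchange, packages the condition as a single matrix identity, which is arguably closer to how one would implement the test numerically. Either argument suffices for the way the lemma is used later (in the semi-algebraic parameterisation of formal curvilinear polygons), since only the semi-algebraicity of the incidence condition matters there.
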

\begin{proof}
  Asking if $ x $ is included in $ \langle y,z \rangle $ is equivalent to asking that the system of equations $ x = \lambda y + \mu z $
  has a solution for $ (\lambda,\mu) \in \R^2 $. Let $ A $ be the $ 4\times 2 $ matrix encoding this system; then a solution
  exists if and only if $ x = \operatorname{Proj}_{\operatorname{Ran}(A)} x $ and by standard results in linear algebra this is equivalent to
  $ (A(A^tA)^{-1}A^t - I_4) x = 0 $.
\end{proof}

The problem of determining whether (convex) polytopes in $ \R^n $ intersect nontrivially is a well-studied problem due to its applications in
linear programming. The main computational difficulty is in computing the vertices from the linear inequations which cut out the polytope, or
in computing the linear inequations from the vertices. Once this computation is done, by convexity the problem is reduced to simply checking the (finitely many)
vertices of one polytope against the linear inequations of the other.

In our setting, we have the advantage that our vertices are easy to determine since we know the incidence structure of the circles cutting out our fundamental
domains. However, we no longer have convexity since edges of our polygons are allowed to be circle pieces, hence it is possible for our polygons to intersect
in the interior of edges and not at vertices: it is no longer \textit{a priori} a finite problem. We therefore need to rely on heavier machinery.

\begin{lem}\label{lem:circle_intersection}
  The intersection problem for curvilinear polygons (i.e.\ domains cut out by finitely many circle arcs and line segments) in $ \hat{\C} $ is real semi-algebraic in the homogeneous coordinates
  of the defining circles.
\end{lem}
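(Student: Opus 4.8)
The plan is to reduce the statement to the Tarski--Seidenberg projection theorem for semi-algebraic sets by encoding the intersection of two curvilinear polygons as a first-order existential formula over $\R$ whose atoms are the quadratic forms defining the bounding circles. The logical engine is entirely standard; the substance is in the encoding.

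First I would record the key structural fact. Let $P$ be a curvilinear polygon bounded by circles $C_1,\dots,C_m$ with $C_i=\V(f_i)$ and $f_i(x,y)=k_i(x^2+y^2)+a_ix+b_iy+h_i$. Since $\partial P\subset\bigcup_i C_i$, the interior of $P$ meets none of the $C_i$, so it is a union of connected components of $\hat{\C}\setminus\bigcup_i C_i$, i.e. of cells of the circle arrangement. Each cell is cut out by a fixed sign pattern on the $f_i$, so the indicator of $P$ is a fixed quantifier-free Boolean combination $\Phi_P$ of the predicates $\{f_i(z)>0\}$ and $\{f_i(z)<0\}$. The point to stress is uniformity: $\Phi_P$ is a single formula whose atoms are polynomial in the point $z$ and affine-linear in the homogeneous coordinates $[k_i:a_i:b_i:h_i]$, and it represents $P$ throughout the open stratum of coordinate space on which the combinatorial (sign-pattern) type of $P$ is constant. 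The same applies to a second polygon $Q$ with formula $\Phi_Q$, and the two polygons intersect exactly when $\exists z\,(\Phi_P(z)\wedge\Phi_Q(z))$ is satisfiable.

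To handle the point at infinity---and thereby the line-segment edges, which are the circles with $k_i=0$ passing through $\infty$---uniformly, I would transport the configuration to the sphere $\Sph^2\subset\R^3$ by stereographic projection. Under this map each circle becomes a plane section, each predicate $f_i\gtrless 0$ becomes the sign of an affine-linear form $\ell_i$ in the coordinates of $\R^3$ whose coefficients are linear in $[k_i:a_i:b_i:h_i]$, and $\infty$ is an ordinary point of $\Sph^2$. The intersection condition becomes $\exists(u,v,w)\in\Sph^2\,(\tilde\Phi_P\wedge\tilde\Phi_Q)$: an existential formula over $\R$ with a bounded quantifier ranging over the compact algebraic set $\Sph^2$ and with all atoms polynomial in $(u,v,w)$ and in the circle coordinates. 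Applying the Tarski--Seidenberg theorem (semi-algebraic sets are closed under projection, see \cite{basu}) eliminates $(u,v,w)$ and yields a semi-algebraic subset of the space of circle coordinates. Since the atoms depend on each circle only through the forms $\ell_i$ and are invariant under positive rescaling once a side convention is fixed, this set is a cone in each circle's coordinates and hence descends to a semi-algebraic condition on the homogeneous coordinates of the defining circles; taking the finite union over the finitely many combinatorial types completes the argument.

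I expect the main obstacle to be establishing the uniform quantifier-free description $\Phi_P$ and controlling its validity across the parameter space, rather than the projection step. The cell structure of the circle arrangement, and hence the sign pattern selecting $P$, can change as the coordinates cross loci of tangency or triple incidence, so one must either restrict to a fixed combinatorial stratum---as furnished by the formal side-pairing data developed in this section---or partition the parameter space into the finitely many semi-algebraic strata on which the type is constant and reassemble by a finite union. The remaining bookkeeping, namely circles through $\infty$ and the homogeneity of the coordinates, is precisely what the passage to $\Sph^2$ is designed to absorb cleanly.
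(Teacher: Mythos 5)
Your proposal is correct and follows essentially the same route as the paper: both encode the intersection of the two polygons as an existential first-order sentence over $\R$ whose atoms are the defining circle inequalities, and then invoke quantifier elimination (Tarski--Seidenberg, \cite[Corollary~2.75]{basu}) to obtain a quantifier-free, hence semi-algebraic, condition on the circle parameters. The refinements you add---stereographic transport to $\Sph^2$ to absorb line segments and the point at infinity, homogeneity of the coordinates, and stratification by combinatorial type---are handled in the paper only implicitly, via its standing convention of treating generic finite configurations (other cases by change of affine chart) and by defining each polygon directly as the feasible region of a fixed system of sign conditions in centre--radius coordinates.
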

\begin{proof}
  This follows from quantifier elimination for $\R$,~\cite[Corollary~2.75]{basu}. More precisely, suppose that our two curvilinear polygons are defined respectively by
  the intersections
  \begin{displaymath}
    P = \left\{ z:\forall_{i=1}^n\, \epsilon_i \abs{z - w_i}^2 < \epsilon_i r_i^2 \right\} \;\text{and}\;
    P' = \left\{ z:\forall_{j=1}^{n'}\, \epsilon'_j \abs{z - w'_j}^2 < \epsilon'_j {r'}_j^2 \right\}
  \end{displaymath}
  where $z$ is a complex indeterminate and where each of the $ w_i,w_j' \in \C $, $ r_i,r'_j \in \R $, and $ \epsilon_i,\epsilon'_j \in \{\pm 1\} $ for
  $ i \in \{1,\ldots,n\} $ and $ j \in \{1,\ldots,n'\} $. Then the problem is to determine the truth of the sentence
  \begin{multline*}
    \Psi = \bigexists_{(x,y) \in \R^2}\; \Biggl[ \Bigl( \ \bigwedge_{i=1}^n \epsilon_i (x - \Re w_i)^2 + \epsilon_i (y - \Im w_i)^2 < \epsilon_i r_i^2  \Bigr) \wedge{}\\
                                          \Bigl( \ \bigwedge_{j=1}^{n'} \epsilon'_j(x - \Re w'_i)^2 + \epsilon'_j(y - \Im w'_i)^2 < \epsilon'_i {r'}_i^2  \Bigr)\Biggr]
  \end{multline*}
  The content of the quantifier elimination theorem is that there is a quantifier-free sentence in the language of ordered fields (i.e.\ a sentence defining a semi-algebraic set) that
  is true over $ \R $ if and only if $ \Psi $ is true, at the expense of increasing the number of dimensions of the problem. This theorem is constructive and implemented in the
  software \texttt{QEPCAD}~\cite{collins91,qepcad} (also available as an optional package for \texttt{Sage}).
\end{proof}

\subsection{Families of purely formal fundamental domains}
We now relate these general algebraic structures to fundamental domains of Kleinian groups. For comparison the reader should refer to the definition of fundamental
domain of $ G \leq \PSL(2,\C) $ acting on $ \Omega(G) $ given in~\cite[\S II.G]{maskit} and the definition of a fundamental polyhedron for the action
of $ G $ on $ \H^3 $ given in~\cite[\S IV.F]{maskit}.

\begin{defn}
  A \df{formal curvilinear polygon with side-pairing structure} in $ \hat{\C} $ consists of:
  \begin{itemize}
    \item A choice of $ r $ lines (i.e.\ $ \P^1$'s) in $ \P^3 $, $L_1,\ldots,L_r $;
    \item For every $ i $, a choice of two (not necessarily distinct) points $ F_{i,1} $ and $ F_{i,2} $ on $ L_i $ such that $ \rad^2 F_{i,- } > 0 $ (i.e.\ both points
          give circles or lines in $ \Sph^2 $); and
    \item For every $ i $, some $ \phi_i \in \PSL(2,\C) $ so that $ L_i = \Pen(\phi_i) $ and $ \phi_i(F_{i,1}) = F_{i,2} $.
  \end{itemize}
\end{defn}

\begin{lem}
  The set of formal curvilinear polygons with side-pairing structures with $ 2r $ sides is parameterised by a semi-algebraic
  subset of $  (\P^3 \times \PSL(2,\C))^r $.
\end{lem}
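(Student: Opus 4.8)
The plan is to exhibit the obvious construction map and to verify that the locus on which it produces a genuine formal curvilinear polygon is carved out by finitely many polynomial equalities and strict inequalities. I will use the $i$-th factor $\P^3 \times \PSL(2,\C)$ to record the pair $(F_{i,1},\phi_i)$: the point of $\P^3$ is taken to be the first marked circle $F_{i,1}$, and the element $\phi_i \in \PSL(2,\C)$ is taken to be the side-pairing map. Everything else in the data is then determined, since we are forced to set $L_i := \Pen(\phi_i)$ and $F_{i,2} := \phi_i(F_{i,1})$; thus a single point of $(\P^3\times\PSL(2,\C))^r$ encodes a candidate $2r$-sided structure, and only the defining constraints remain to be imposed.

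First I would make sense of semi-algebraicity on these spaces: $\P^3$ is a real projective variety and $\PSL(2,\C)$ is realised as a real affine variety through the real and imaginary parts of the entries of an $\SL(2,\C)$ representative subject to $\det = 1$, the $\pm I$ ambiguity being irrelevant below because both $\Pen(g)$ and the action of $g$ on its pencil are insensitive to the sign of the representative. Working in the generic chart where $\infty$ lies on none of the relevant circles (as licensed by the remark following the circle-pencil definitions, with the remaining cases recovered as limits or in other charts), the two isometric circles of $\phi_i$ are rational in the entries of $\phi_i$, so by part (i) of \cref{lem:action_on_circles2} the line $\Pen(\phi_i)$, spanned by this frame, has homogeneous coordinates moving rationally with $\phi_i$, and by part (ii) the image $F_{i,2} = \phi_i(F_{i,1})$ moves rationally with $(F_{i,1},\phi_i)$.

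Next I would impose the defining constraints, each of which is semi-algebraic. The incidence requirement $F_{i,1} \in \Pen(\phi_i)$ is, by \cref{lem:line_incidence}, an algebraic condition in the homogeneous coordinates of $F_{i,1}$ and of the rational frame of $\Pen(\phi_i)$, hence (after substituting the rational expressions for the frame) a semi-algebraic condition in $(F_{i,1},\phi_i)$. The reality requirement $\rad^2 F_{i,1} > 0$ reads $a^2 + b^2 - 4kh > 0$ in homogeneous coordinates $[k:a:b:h]$ of $F_{i,1}$; this is a well-defined sign on $\P^3$, since the quadratic form is homogeneous of even degree, and cutting it out chart by chart gives an open semi-algebraic set. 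Finally one adds the nondegeneracy conditions that $\phi_i$ is not the identity and that its isometric circles exist, so that $\Pen(\phi_i)$ is defined; these are again semi-algebraic. The remaining clauses of the definition, namely $\phi_i(F_{i,1}) = F_{i,2}$ and $L_i = \Pen(\phi_i)$, hold by construction, and $\rad^2 F_{i,2} > 0$ is automatic because the M\"obius map $\phi_i$ carries genuine circles to genuine circles (if preferred one may impose it directly, still semi-algebraically, using part (ii) of \cref{lem:action_on_circles2}).

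Let $D \subset (\P^3\times\PSL(2,\C))^r$ be the set on which all of the above hold for every $i$; being a finite conjunction of semi-algebraic conditions, $D$ is semi-algebraic. Each point of $D$ yields a valid formal curvilinear polygon with side-pairing structure, and conversely every such structure arises in this way, by choosing any $\phi_i$ realising the $i$-th pairing and letting $F_{i,1}$ be its first marked circle; hence $D$ parameterises the set of $2r$-sided formal curvilinear polygons. The main obstacle is the bookkeeping around degenerate circle configurations --- elements fixing $\infty$, or parabolic and elliptic $\phi_i$ whose pencils carry no canonical frame --- for which the rational formulae for $\Pen(\phi_i)$ must be read off in a different affine chart; assembling these local descriptions into one global semi-algebraic set is the only delicate point, and by the cited remark it suffices to treat the generic chart and obtain the rest as limiting cases.
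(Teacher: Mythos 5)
Your proposal is correct and follows essentially the same route as the paper: encode each side as a pair $(F_{i,1},\phi_i) \in \P^3 \times \PSL(2,\C)$, recover $L_i = \Pen(\phi_i)$ and $F_{i,2} = \phi_i(F_{i,1})$ from this data, and cut out the admissible locus by the inequalities $\rad^2 F_{i,1} > 0$ together with the incidence condition $F_{i,1} \in \Pen(\phi_i)$, which is semi-algebraic by \cref{lem:line_incidence}. The only substantive difference is cosmetic: you span the pencil by the two isometric circles, whose $\P^3$-coordinates are rational in the matrix entries, whereas the paper spans it by the two fixed points of $\phi_i$ and absorbs the resulting quadratic extension with dummy variables.
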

\begin{proof}
  The data of a formal curvilinear polygon with side-pairing structure can be recovered from a choice $ \phi_1,\ldots,\phi_r $ of $ r $ elements of $ \PSL(2,\C) $ together with
  a choice of a single circle (of nonzero radius) or line in $ \Pen \phi_i $ for each $ i $. The set of all formal curvilinear polygons of rank $ r $ is therefore $ 7r $ dimensional
  over $ \R $, or $ 7r-6 $-dimensional up to conformal automorphisms or conjugation in $ \PSL(2,\C) $, and is parameterised by the Cartesian product $ (\P^3 \times \PSL(2,\C))^r $.

  Picking coordinates by writing an arbitrary element of the Cartesian product as $ ((x_1, \phi_1),\ldots,(x_r, \phi_r)) $, the subset of admissable points is cut out by the $ r $
  inequalities $ \rad^2 x_i > 0 $ together with the conditions $ P(i) \coloneq \text{``}x_i \in \Pen(\phi_i)\text{''} $ for all $ i \in \{1,\ldots,r\} $. We can rephrase $ P(i) $ as
  ``the homogeneous coordinates of $ x_i $ lie in the $ \P^1 $ spanned by the two fixed points of $ \phi_i $''. The computation of the fixed points is algebraic, requiring at most a
  quadratic field extension over the domain of definition of $ \phi_i $,\footnote{More precisely, add an additional dummy variable $ v_i $ for each quadratic field extension and then slice the
  resulting space by the quadratic polynomials $ v_i^2 = \cdots $; c.f.\ \cref{ex:schottky_85}.\label{foot:dummy}} and so the line $ \Pen \phi_i $ is algebraically defined. Checking whether $ x_i $
  lies on this line is then semi-algebraic by \cref{lem:line_incidence}.
\end{proof}

\begin{defn}
  An \df{algebraic family} of formal curvilinear polygons with side-pairing structures is an $\R$-algebraic map $ U \to (\P^3 \times \PSL(2,\C))^r $ where $ U \subset \R^s $ is some open semi-algebraic set.
\end{defn}

If $ G $ is discrete and geometrically finite, then a Ford domain for $ G $ gives rise naturally to a formal curvilinear polygon with side-pairing structure that, additionally,
has certain geometric conditions associated to it. Our constructions in \cref{sec:fundom} are motivated by this setting: the idea is that we first construct algebraic families
of formal curvilinear polygons, and then we find subsets of these families in which the polygons satisfy angle and incidence conditions that imply they are actually fundamental
domains in some open subset of the parameter space.

\begin{figure}
  \labellist
  \small\hair 2pt
  \pinlabel {$F_{1,1}$} at 79 73
  \pinlabel {$F_{1,2}$} at 141 49
  \pinlabel {$F_{3,1}$} [r] at 31 70
  \pinlabel {$F_{3,2}$} [l] at 194 65
  \pinlabel {$F_{2,1}$} [t] at 80 20
  \pinlabel {$F_{2,2}$} [b] at 131 111
  \pinlabel {$\phi_2$} [l] at 39 58
  \pinlabel {$\phi_3$} [t] at 127 103
  \pinlabel {$\phi_1$} [t] at 115 76
  \pinlabel {$\Fix \phi_2 = \infty = \Fix \phi_3$} [br] at 310 27
  \pinlabel {$F_{3,1}$} [br] at 337 53
  \pinlabel {$F_{1,2}$} [r] at 358 98
  \pinlabel {$L_1 = \Pen \phi_1$} [r] at 351 125
  \pinlabel {$L_3 = \Pen \phi_3$} [b] at 431 114
  \pinlabel {$F_{3,2}$} [tl] at 383 88
  \pinlabel {$F_{1,1}$} [l] at 371 55
  \pinlabel {$F_{2,2}$} [t] at 415 36
  \pinlabel {$L_2 = \Pen \phi_2$} [b] at 463 45
  \pinlabel {$F_{2,1}$} [t] at 348 27
  \pinlabel {$\P^3$} at 489 96
  \endlabellist
  \centering
  \includegraphics[width=\textwidth]{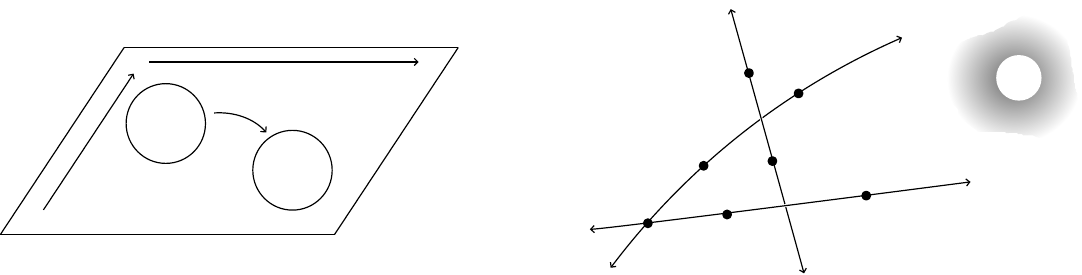}
  \caption{A formal curvilinear polygon with side-pairing structure corresponding to a $ (1;2)$-compression body. On the left, a generic fundamental domain; on the right, the structures in $ \P^3 $.\label{fig:compression_body_formal_domain}}
\end{figure}

\begin{ex}
  The generic fundamental domain for a $ (1;2)$-compression body group (i.e.\ a paralellogram with opposite sides paired by parabolics and two round discs deleted from its interior with
  boundary circles paired by a loxodromic) corresponds to a choice of
  \begin{enumerate}
    \item a line $ L_1 $ which intersects the nullspace of $ \rad^2 $ exactly twice, with two marked distinct marked points $ F_{1,1} $ and $ F_{1,2} $ so that $ \rad^2 F_{1,j} > 0 $ for each $ j \in \{1,2\}$;
    \item two projective lines $L_2 $ and $ L_3 $ in $ \P^1 $, with $ L_2 \inter L_3 = \{ \xi \} $ where $ \xi \in \P^3 $ is a point with $ \rad^2 \xi = 0 $ and where no other points on either $ L_2 $ or $ L_3 $ is
          in the nullspace of $ \rad^2 $, with two marked points on each line, $ F_{2,1}$, $F_{2,2}$, $F_{3,1}$, and $F_{3,2} $, so that $ \rad^2 F_{i,j} > 0 $ for each $ i \in\{2,3\} $ and $ j \in \{1,2\}$.
  \end{enumerate}
  This data uniquely determines $ \phi_2 $ and $ \phi_3 $ (for each $ i \in \{2,3\} $ there is a unique parabolic fixing the point in $ \hat{\C} $ represented by $ \xi $ which
  translates $ F_{i,1} $ to $ F_{i,2} $) but to determine $ \phi_1 $ an additional real number (the holonomy angle) must be given. This setup determines a fundamental domain for a discrete group
  if the two circles $ F_{1,1} $ and $ F_{1,2} $ lie inside the four-sided shape bounded by the $ F_{i,j} $ for $ i \in \{2,3\} $, $ j \in \{1,2\} $; this is a semi-algebraic condition on
  points in $ (\P^3 \times \PSL(2,\C))^3 $. \Cref{fig:compression_body_formal_domain} is a cartoon of the various geometric structures \textit{in situ}.
\end{ex}

\begin{ex}\label{ex:cone_mfd}
  In~\cite{elzenaar24c}, the central result is the construction of an algebraic family of formal curvilinear polygons that satisfy additional geometric conditions
  which are algebraic in the parameters of $ X(G) $ for $ G $ a genus $2$ surface group. These conditions imply that each of the polygons has a well-defined
  interior on $ \hat{\C} $, and that the hyperbolic convex hull of this interior is a finite-sided polyhedron in $ \H^3 $. Further, the metric space quotient
  of this polyhedron induced by the side-pairing maps is a cone manifold that is supported on the thickened genus $2$ surface and has a controlled singular
  structure.
\end{ex}

\section{Construction of fundamental domains}\label{sec:fundom}
In this section, we will continue to assume that $ G $ is a convex cofinite manifold group with $ M = \H^3/G $ and that $ \Lambda $ is a rational lamination on $ \partial M $,
and that $ X $ is a parameter space for the character variety $ X(G) $. We will give, in \cref{thm:funddom}, an explicit construction of fundamental domains for groups $ \tilde{G} \in \mc{P}_\Lambda \subset X $.
This will be an algebraic construction, in the sense that if $ \nu : U \to \mc{P}_\Lambda $ ($U$ a real ball $ \mathbb{D}(\R) $ of dimension $ \dim \mc{P}_\Lambda $) is a real algebraic
parameterisation of some subset of $ \mc{P}_\Lambda $ then the structures that define the fundamental domain of $ G_{\nu(t)} $ vary real-algebraically with $ t $ (in fact, they vary
algebraically over a finite tower of quadratic extensions of the field over which the parameterisation of representations is defined). We then extend the
construction to allow for arbitrary deformations where $ U $ is a complex ball $ \mathbb{D}(\C) $ of dimension $ \dim \QH(G) $ and prove our second main theorem, \cref{thm:mainthm}.

\subsection{Fundamental domains of F-peripheral groups}
In many of our constructions, we can restrict to the setting that $ \Lambda $ is a maximal rational lamination.
\begin{obs}\label{obs:cut_up_flat}
  Let $ \Pi $ be an F-peripheral subgroup; then there exists a finite set of F-peripheral subgroups $ \Pi_1,\ldots,\Pi_n < \Pi $
  such that (a) for all $ i $, $ \Delta(\Pi_i) = \Delta(\Pi) $, (b) for all $ i $ the surface $ \Delta(\Pi_i)/\Pi_i $ is a thrice-holed
  (or punctured) sphere, and (c) $ \hconv \Lambda(\Pi)/\Pi $ is the union of the surfaces $ \hconv \Lambda(\Pi_i)/\Pi_i $ across their
  boundaries.
\end{obs}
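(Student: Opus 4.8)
The plan is to realise the decomposition as a pants decomposition of the hyperbolic surface $\Delta(\Pi)/\Pi$ and to read off the subgroups $\Pi_i$ from its pieces. I would first record that we may assume $\Pi$ is finitely generated (as holds for all the $F$-peripheral subgroups arising in our circle chains), so that $F \coloneq \Delta(\Pi)/\Pi$ is a finite-type hyperbolic surface; it has negative Euler characteristic since $\Pi$ is non-elementary. If $F$ is already a thrice-holed (or punctured) sphere there is nothing to prove (take $n = 1$ and $\Pi_1 = \Pi$); otherwise, choose a pants decomposition of its convex core, that is, a maximal collection of disjoint essential simple closed geodesics $\gamma_1,\ldots,\gamma_k$ cutting $F$ into pairs of pants $P_1,\ldots,P_n$, where some cuffs of the $P_i$ may be cusps of $F$ rather than geodesics. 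Such a decomposition exists for any finite-type hyperbolic surface of negative Euler characteristic.

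For each $i$ I would fix a lift $\tilde P_i \subset \Delta(\Pi)$ of $P_i$ and set $\Pi_i \coloneq \Stab_\Pi(\tilde P_i)$; this is the image of $\pi_1(P_i)$ in $\Pi$, hence a free group of rank $2$ generated by the three boundary loops $a_i,b_i,c_i$ of the pair of pants subject only to $a_i b_i c_i = 1$. To establish (a), observe that $\Pi_i$ is a non-elementary subgroup of $\Pi$, so $\Lambda(\Pi_i) \subseteq \Lambda(\Pi) \subseteq \partial\Delta(\Pi)$ is an infinite (indeed perfect) set; the unique round circle through $\Lambda(\Pi_i)$ is therefore $\partial\Delta(\Pi)$, so $\Pi$ and $\Pi_i$ share the invariant circle $\partial\Delta(\Pi)$. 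Since $\Pi_i \leq \Pi$ preserves the round disc $\Delta(\Pi) \subset \Omega(G)$, and the peripheral disc of an $F$-peripheral group is precisely the round disc bounded by its invariant circle that lies in $\Omega(G)$, we conclude $\Delta(\Pi_i) = \Delta(\Pi)$ and that $\Pi_i$ is itself $F$-peripheral.

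With (a) in hand, statement (b) is the identification of $\Delta(\Pi_i)/\Pi_i = \Delta(\Pi)/\Pi_i$ as the covering of $F$ associated to the subgroup $\pi_1(P_i)$. Because $P_i \hookrightarrow F$ is $\pi_1$-injective, this cover contains an embedded lift of $P_i$ as a deformation retract; equivalently, $\H^2/\Pi_i$ is the complete hyperbolic surface with free rank-$2$ fundamental group generated by the three peripheral elements $a_i,b_i,c_i$, so it has exactly three ends (funnels or cusps) and is homeomorphic to a thrice-holed (or punctured) sphere. For (c), each $\hconv\Lambda(\Pi_i)/\Pi_i$ is the convex core of this surface, namely the geodesic pair of pants $P_i$ itself, with cuffs along the geodesics dual to $\gamma_1,\ldots,\gamma_k$ (or along cusps); gluing the $P_i$ back along these boundary geodesics reconstructs the convex core of $F$, which is exactly $\hconv\Lambda(\Pi)/\Pi$.

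The main obstacle I anticipate is the verification of (b): ensuring that the cover of $F$ associated to $\pi_1(P_i)$ is a bona fide thrice-holed sphere rather than a surface with extra topology or extra ends. The key point is that $P_i$ is embedded and incompressible, so the cover retracts onto a single embedded copy of the pair of pants and introduces no further ends; I would make this precise by counting the boundary classes directly from the generating triple $a_i,b_i,c_i$ and invoking the standard classification of complete hyperbolic surfaces with free rank-$2$ fundamental group, where the existence of three distinct peripheral classes rules out the once-holed torus. The maximality assertion $\Delta(\Pi_i) = \Delta(\Pi)$ in (a) is the only other delicate point, but it follows at once from the fact that $\Pi_i$ and $\Pi$ have the common invariant circle $\partial\Delta(\Pi)$.
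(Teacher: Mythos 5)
Your proposal is correct and takes essentially the same route as the paper, whose entire proof is the one-line remark that the result ``follows by cutting $\Delta(\Pi)/\Pi$ up along suitable closed geodesics and taking the corresponding subgroups as in \cref{cons:stratification_of_surfaces}''---i.e.\ precisely your pants decomposition together with the stabiliser subgroups of lifted pairs of pants. Your verifications of (a)--(c) (shared invariant circle, covering-space identification of the thrice-holed sphere, and reassembly of the convex core from the geodesic pairs of pants) are sound expansions of details the paper leaves implicit.
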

This follows by cutting $ \Delta(\Pi)/\Pi $ up along suitable closed geodesics and taking the corresponding subgroups as in \cref{cons:stratification_of_surfaces} above. The consequence
of the observation is that we can replace `large' peripheral subgroups that with a suitable family of thrice-holed sphere subgroups. The resulting groups do not form a circle chain, but
are a consequence of viewing everything lying on a higher-dimensional pleating variety as being a point `at the end' of a pleating variety corresponding to a maximal lamination. Because
of this we can carry out our fundamental domain constructions in the pleating rays corresponding to maximal laminations, and then take limits of these domains to get fundamental domains
for arbitrary rational laminations.

Let $ G $ admit a $\Lambda$-circle chain $\mathbold{\Pi}$. Recall we have a graph $\FPer(G)$ of F-peripheral subgroups of $ G $, which admits an action of $ G $ by conjuation;
and the circle chain $\mathbold{\Pi}$ is induced by a choice of connected fundamental domain, which is a subforest $T$ of $ \FPer(G) $, for this action. We aim to construct
fundamental domains for the action of $ G $ on $ \Omega(G) $ by locally choosing fundamental domains for each peripheral subgroup $ \Pi_{i,j} \in \mathbold{\Pi} $, and then
making small modifications so that the action of $G$ correctly glues together the fundamental domains for the peripheral subgroups on the extremities of $T$.

\begin{defn}
  Let $ \Pi $ be a genus $2$ Fuchsian Schottky group uniformising a thrice-holed sphere, and let $ f_2 $, $ f_4 $, and $ f_6 $ be primitive hyperbolic elements
  representing the three boundary components. A fundamental polygon for $ \Pi $ acting as a subgroup of $ \Isom^+(\H^2) $ is called \df{minimally bounded} if
  it has three ideal edges and six geodesic edges paired by $ f_2 $, $ f_4 $, and $ f_6 $ (see the leftmost image in \cref{fig:thrice_holed_formal_domain}).
\end{defn}
This definition stands in opposition to a classical Schottky domain for the same group, which has four ideal edges such that two of them are joined end-to-end to
form one of the three boundary holes of the quotient surface.

\begin{figure}
  \labellist
  \small\hair 2pt
  \pinlabel {$F_{1,2}$} at 87 180
  \pinlabel {$F_{3,2}$} at 150 180
  \pinlabel {$f_2$} [rb] at 89 112
  \pinlabel {$f_6$} [lb] at 151 111
  \pinlabel {$F_{1,1}$} at 18 86
  \pinlabel {$f_4$} [t] at 110 66
  \pinlabel {$F_{3,1}$} at 217 86
  \pinlabel {$F_{2,1}$} at 28 28
  \pinlabel {$F_{2,2}$} at 194 25
  \pinlabel {$\Sigma = \partial \Delta$} [t] at 117 0
  \pinlabel {$\Delta$} at 115 93
  \pinlabel {$l_2 = \Pen f_2$} [r] at 318 165
  \pinlabel {$F_{1,2}$} [tl] at 348 170
  \pinlabel {$l_1 = \langle F_{3,2}, F_{1,2} \rangle$} [l] at 392 181
  \pinlabel {$F_{3,2}$} [t] at 450 144
  \pinlabel {$\Sigma$} [r] at 420 134
  \pinlabel {$l_6 = \Pen f_6$} [l] at 497 128
  \pinlabel {$F_{3,1}$} [r] at 509 94
  \pinlabel {$l_5 = \langle F_{2,2}, F_{3,1} \rangle$} [l] at 481 46
  \pinlabel {$F_{2,2}$} [b] at 430 27
  \pinlabel {$l_4 = \Pen f_4$} [r] at 387 6
  \pinlabel {$F_{2,1}$} [bl] at 370 31
  \pinlabel {$l_3 = \langle F_{1,1}, F_{2,1} \rangle$} [t] at 300 84
  \pinlabel {$F_{1,1}$} [l] at 319 134
  \endlabellist
  \centering
  \includegraphics[width=\textwidth]{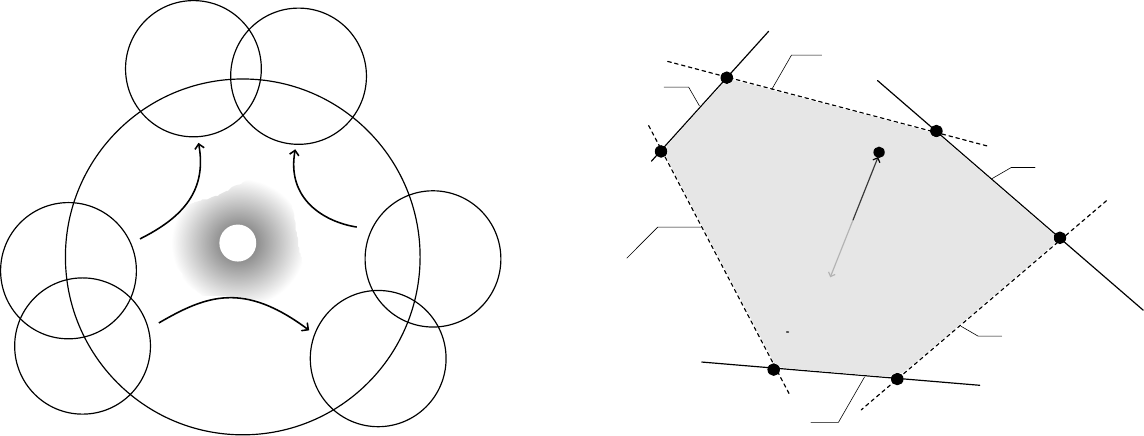}
  \caption{A formal curvilinear polygon with side-pairing structure corresponding to a Fuchsian thrice-holed sphere group, illustrating some additional geometric
          constraints. On the left, a minimally bounded fundamental domain; on the right, the structures in $ \P^3 $ corresponding to the three side-pairings (solid
          lines) as well as the three elliptic pencils (dashed lines). The six lines are not coplanar in $ \P^3 $, but every side of the hexagon is orthogonal to
          the circle $ \Sigma = \partial \Delta $ so we draw the latter as an `orthogonal vector'.\label{fig:thrice_holed_formal_domain}}
\end{figure}

\begin{rem}
  Referring to the convenient table T.1 on p.~vii of~\cite{hertrichjeromin} we can `projectivise' the definition of a minimally bounded domain.
  We say that a set of six lines in $ \P^3 $ forms a \df{skew-hexagon} if there is a cylic ordering (i.e.\ indices taken mod $6$) of the lines $ l_1,\ldots,l_6 $ such that $ l_i $
  intersects $ l_{i+1} $ and $ l_{i-1} $ transversely and does not intersect any other $ l_i $. Let $ S^2 $ be the nullspace of the quadratic form $ \rad^2 $ in $ \P^3 $,
  and let $ \Sigma \in \P^3 $ be the circle bounding the peripheral disc of $ \Pi $. Then a minimally bounded fundamental domain is a skew-hexagon $ l_1,\ldots,l_6 $ such that:
  \begin{enumerate}
    \item Each $ l_i $ is orthogonal to $ \Sigma $, that is if $ [ -,- ] $ is the Minkowski product arising from $ \rad^2 $ then $ [ l_i, \Sigma ] = 0 $.
    \item The boundary-parallel primitive elements $ f_2, f_4, f_6 $ of $ \Pi $ have the property that $ f_i(l_i) = l_{i+2} $.
    \item If $ i $ is even, then $ l_i $ intersects $ S^2 $ exactly twice (or exactly once, if the generator $ f_{i/2} $ of $ \Pi $ is parabolic).
    \item If $ i $ is odd, then $ l_i $ does not meet $ S^2 $.
  \end{enumerate}
  When $ i $ is even, $ l_i = \Pen(f_i) $. The intersection points $ l_i \inter l_{i+1} $ are the six circles in the minimally bounded domain for $ \Pi $. The corresponding
  formal curvilinear polygon with side-pairing structure is shown in \cref{fig:thrice_holed_formal_domain}.
\end{rem}

\begin{figure}
  \labellist
  \small\hair 2pt
  \pinlabel {$T$} [l] at 71 95
  \pinlabel {$S^*$} [r] at 137 122
  \pinlabel {$\tilde{T}$} [l] at 276 98
  \pinlabel {$S^*$} [r] at 347 122
  \endlabellist
  \centering
  \includegraphics[width=.9\textwidth]{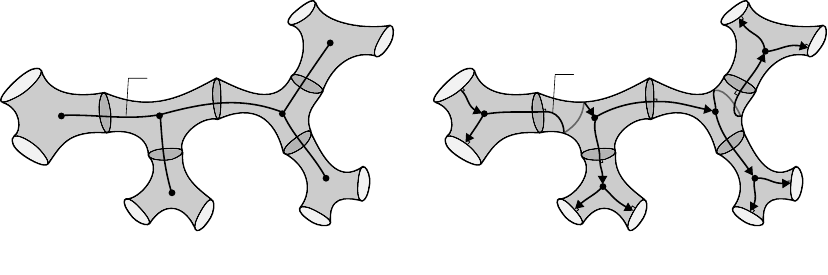}
  \caption{The surface $ S^* $ and embedded trees appearing in the proof of \cref{lem:compatible_min_bdd_domains}.\label{fig:compatible_domains}}
\end{figure}

\begin{lem}\label{lem:compatible_min_bdd_domains}
  We may choose for each $ \Pi \in \mathbold{\Pi} $ a minimally bounded fundamental polygon $ D(\Pi) $
  for the action on its corresponding peripheral disc $ \Delta(\Pi) $ in such a way that the domains are compatible: if $ \Pi $ and $ \Pi' $
  are two elements of $ \mathbold{\Pi} $ which intersect in a primitive subgroup $ \langle g \rangle $, then the facets of $ D(\Pi) $ paired by $ g $
  are arcs of the same circles as the facets of $ D(\Pi') $ paired by $ g $.
\end{lem}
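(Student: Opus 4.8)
The plan is to convert the required compatibility into a single shared circle per leaf, and then to make those choices coherently by inducting outward along the forest $T$ underlying the circle chain. By \cref{obs:cut_up_flat} I may assume every $\Pi\in\mathbold{\Pi}$ uniformises a thrice-holed sphere, so that a minimally bounded domain is a skew-hexagon whose three pairs of geodesic sides lie on circles in the boundary pencils $\Pen(g_1),\Pen(g_2),\Pen(g_3)$ and whose three ideal edges lie on $\partial\Delta(\Pi)$.

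The geometric heart of the argument is that compatibility along a shared element is automatic once one circle is fixed. Suppose $\Pi\inter\Pi'=\langle g\rangle$ with $g$ loxodromic. Since $g$ preserves each peripheral disc and fixes its two fixed points, those fixed points lie on both boundary circles $\partial\Delta(\Pi)$ and $\partial\Delta(\Pi')$; and as $\Pen(g)$ is exactly the Apollonian pencil they determine, every circle in $\Pen(g)$ is orthogonal to \emph{both} boundary circles. Hence a single circle $c\in\Pen(g)$ restricts to a genuine geodesic in each disc---the arcs $c\inter\Delta(\Pi)$ and $c\inter\Delta(\Pi')$---and $g$ pairs $c$ with $g(c)\in\Pen(g)$ in both. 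The facets of $D(\Pi)$ and of $D(\Pi')$ paired by $g$ are then arcs of the very same circles $c$ and $g(c)$, which is precisely the desired conclusion. It therefore suffices to assign to each leaf $\lambda$ of $\Lambda$ lying in a tree edge of $T$---where the two incident groups share the \emph{equal} element $g$, by the tree-edge case of \cref{cons:stratification_of_surfaces}---a single circle $c_\lambda\in\Pen(g)$ used by both; non-tree incidences carry only conjugate elements and so impose no constraint here.

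To fix these circles coherently I would realise them on the convex core boundary surface $S^*=\bigcup_{i,j}\Delta(\Pi_{i,j})/\Pi_{i,j}$ of \cref{lem:pl_ray_is_glued_discs}, whose pairs of pants are glued along the leaves of $\Lambda$. Choose a system of seam arcs on $S^*$, each crossing exactly one leaf transversally in a single point and cutting each pants into a minimally bounded hexagon; lifting the unique crossing point on a leaf $\lambda$ selects the circle $c_\lambda\in\Pen(g)$ through it, which by the orthogonality above serves simultaneously as the $g$-facet in both adjacent discs. The main obstacle is to place the crossing points so that in \emph{every} pants the six resulting arcs close up into an \emph{embedded} hexagon, subject to the crossing point on each tree-leaf being shared by its two pants. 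This is where the forest $T$ is essential: I would fix the crossing points by induction from a root pants, observing that a spanning forest gives each non-root pants a unique parent edge, so---since a pants has only three boundaries---each non-root pants inherits exactly one crossing point and freely places the other two. The completion step reads: given one crossing point, choose the remaining two so that the hexagon is embedded. Because consecutive sides of the hexagon must genuinely intersect, the crossing points cannot be slid arbitrarily toward the fixed points, so the admissible position of each is a proper open subinterval of its pencil depending on the individual pants; the delicate point is to choose each shared crossing point in the overlap of the two adjacent pants' admissible intervals. I would secure nonempty overlap by noting that both intervals are open and each contains the balanced position determined by $g$ alone---whose coordinate in $\Pen(g)\cong\P^1$ is group-independent by \cref{lem:action_on_circles}---and that embeddedness persists under small perturbation of the crossing points. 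Granting this continuity input, the induction runs to completion and produces compatible minimally bounded domains $D(\Pi)$ for every $\Pi\in\mathbold{\Pi}$.
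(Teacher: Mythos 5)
Your reduction is the same as the paper's: compatibility amounts to choosing, for each leaf dual to a tree edge, one circle in $\Pen(g)$ (equivalently a crossing point on the leaf), since every circle in $\Pen(g)$ is orthogonal to both peripheral circles and is paired with $g(c)$ inside both discs; and the coherent choice is then made by induction outward along the tree, each non-root thrice-holed sphere inheriting exactly one crossing point from its parent edge. All of this is correct and matches the paper. The gap is at the completion step, which is the crux of the whole lemma: you must show that an \emph{arbitrary} inherited crossing point can be completed, by choices on the other two boundary leaves, to six circles bounding an embedded hexagon. Your justification fails for two reasons. First, the admissible set is not a product of three independent intervals: the conditions (consecutive facet circles meet, the others are disjoint) couple the three crossing points, so ``the admissible interval for this leaf in this pants'' is not defined until the other two points are fixed. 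Second, and fatally, the ``balanced position determined by $g$ alone'' is pinned (via \cref{lem:action_on_circles}) to the isometric circles of $g$, and the paper itself stresses that isometric circles are not conjugation-equivariant: $\Pen(hgh^{-1}) = h\Pen(g)$, but the isometric circles of $hgh^{-1}$ are not the $h$-images of those of $g$. Admissibility of a crossing point \emph{is} a M\"obius-equivariant notion. Conjugating by $h$ with $h^{-1}(\infty)$ close to a fixed point of $g$ drives both isometric circles of $hgh^{-1}$ (hence any ``balanced'' circle between them) into an arbitrarily small neighbourhood of that fixed point, while the admissible set only moves by $h$; so the balanced position leaves the admissible set, and the claim that both adjacent pants' admissible sets contain it is false in general. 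Since you yourself flag this as the input being granted, the induction does not close.

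What is missing is the paper's concrete mechanism for the completion step, which works for \emph{every} inherited crossing point: on the glued pleated surface, continue the inherited arc as a geodesic into the pants (it enters orthogonally through the leaf), choose any point $x$ on it, and drop perpendiculars from $x$ to the other two boundary leaves. Cutting along the resulting embedded tripod automatically yields an embedded hexagonal fundamental polygon, and orthogonality of each arm to the leaf it crosses forces the two facets on either side of that leaf to lie on the \emph{same} circle of $\Pen(g)$ (both are orthogonal to $\Ax(g)$ at the same point, hence lie in the same plane orthogonal to the axis). No openness or perturbation argument is needed, and embeddedness of the hexagon comes for free from embeddedness of the tripod. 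Two smaller points: your argument treats only loxodromic $g$, whereas leaves of a rational lamination may be rank-$1$ cusps with $g$ parabolic, where the crossing-point parameterisation degenerates (the pencil is parabolic and there is no axis); and your appeal to \cref{lem:pl_ray_is_glued_discs} for the glued surface is fine, but the surface you need to work on is the pleated convex core boundary, not the union of disc quotients in the abstract, since orthogonal crossing of the bending line is what makes the two sides of the cut lie on one circle.
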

\begin{proof}
  Let $ T $ be the tree in $ \Gamma(\Lambda) $ (\cref{cons:stratification_of_surfaces}) which is dual to $ \mathbold{\Pi} $. Consider the possibly disconnected pleated surface
  $ S = \partial \hconv(\Lambda(G))/G $; its bending lamination is $ \Lambda $. Cut this surface along the leaves of $ \Lambda $ which do not intersect the tree $ T $. The result is
  an abstract pleated hyperbolic surface $S^*$ with a number of geodesic boundary components, as in the left cartoon of \cref{fig:compatible_domains}. Let $ \tilde{T} $ be the tree obtained
  by adding a leaf to $ T $ for every boundary component; the resulting tree has only leaves and trivalent vertices. Choose an orientation on $ \tilde{T} $, i.e.\ a choice of
  orientation for each edge, such that a vertex has no out-edges only if it is a leaf of $ \tilde{T} $, and such that there is exactly one leaf with an out-edge. Arbitrarily fix a point on the boundary
  component corresponding to this unique `source' leaf, and fire from this point an orthogonal geodesic $ \xi $ into the interior of the thrice-holed sphere. Let $ \beta_1 $ and $ \beta_2 $
  be the boundary components corresponding to the out-vertices from the vertex of $ \tilde{T} $ corresponding to this thrice-holed sphere. Choose an arbitrary
  point on $ \xi $; for each $ i \in \{1,2\} $ there is a unique geodesic ray from $ \xi $ that meets $ \beta_i $ orthogonally. Continuing via induction on trivalent vertices of $ \tilde{T} $
  ordered according to the orientation, we obtain a geodesic embedding of $ \tilde{T} $ onto $ S^* $ so that leaf vertices lie on boundary components, trivalent vertices lie in the interior
  of flat pieces, and every edge of $ \tilde{T} $ is orthogonal to all of the leaves of the bending locus of $ S^* $ that it intersects. See the right cartoon of \cref{fig:compatible_domains}.

  Lift this topological graph to $ \partial \hconv(\Lambda(G)) $. The lift of an edge $e$, $ \tilde{e} $, is piecewise geodesic in $ \H^3 $, with geodesic pieces on
  each `dome' of the hyperbolic convex hull boundary and possible corners only at intersections between adjacent domes. By construction, each component of $ \tilde{e} $ lies on two geodesic
  domes which are (up to the action of $G$) of the form $ \hconv \partial\Delta{\Pi_{i,j}} $ and crosses exactly one pleating curve, which is the axis of the primitive hyperbolic or parabolic
  element $ g $ lying in the intersection of the corresponding F-peripheral groups in the circle chain. Since the intersection of $ \tilde{e} $ with $ \Ax(g) $ is orthogonal (by construction),
  each connected component of $ \tilde{e} $ is a connected subset of the intersection of $ \hconv{\Lambda(G)} $ with a geodesic dome supported on a circle in $ \Pen(g) $.
  In other words, cutting $ S^* $ along the topological embedding of $ \tilde{T} $, lifting to $ \partial \hconv(\Lambda(G))/G $, and projecting in the natural way to $ \hat{\C} $, gives a
  system of minimally bounded fundamental polygons for the circle chain $ \mathbold{\Pi} $, with all paired edges lying in the pencil of the side-pairing transformation, and with all edges
  matching across the `internal' boundaries of the circle chain as desired.
\end{proof}

\begin{figure}
  \centering
  \begin{subfigure}[t]{.49\textwidth}
    \centering
    \includegraphics[width=\textwidth]{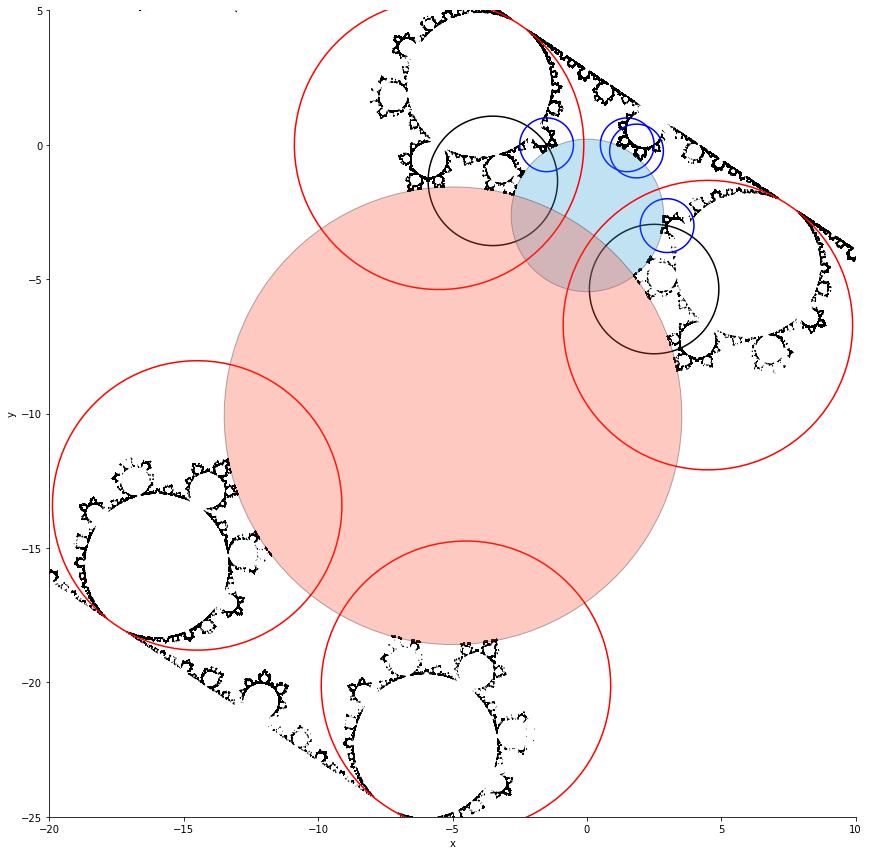}
    \caption{Isometric circles of the three boundary-parallel elements for each of the groups.}
  \end{subfigure}\hfill
  \begin{subfigure}[t]{.49\textwidth}
    \centering
    \includegraphics[width=\textwidth]{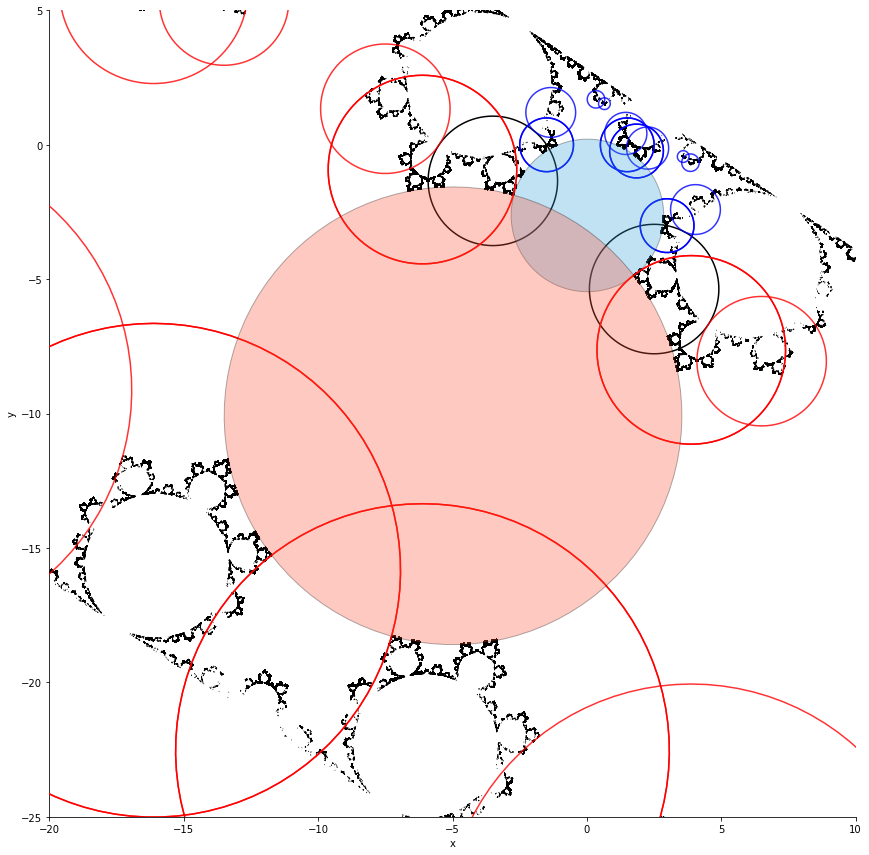}
    \caption{Circles forming compatible minimally bounded fundamental domains for the two groups.\label{fig:function2}}
  \end{subfigure}
  \caption{Peripheral subgroups of a partially cusped genus two function group. A choice of peripheral discs is shaded: the smaller blue disc is $ \Delta(\Pi_{1,1}) $ and
  the larger red disc is $ \Delta(\Pi_{1,2}) $.\label{fig:function}}
\end{figure}

\begin{ex}\label{ex:function}
  Consider the space $X$ of function groups~\cite[Chapter~X]{maskit} obtained from a product $ S_{2,0} \times I $ by pinching one end to a pair of thrice-punctured
  spheres to produce three accidental parabolics on the other end. Then $ X $ is a subvariety of the character variety of the genus $2$ surface
  group $ S_{2,0} $. Write $ \pi_1(S_{2,0}) = \langle P,Q,M,N : [P,Q][M,N] \rangle $; in~\cite[Proposition~3]{elzenaar24c} we gave an explicit parameterisation of $X$
  by $ (\alpha,\beta,\sigma) \in \C^3 $. The loops on $ S_{2,0} $ represented by the three words $ M $, $ M^{-1}Q $, and $ M^{-1}N^{-1}P M P^{-1} $
  form a maximal rational lamination $\Lambda$; one choice of fundamental groups for the two components of $ S_{2,0} \setminus \Lambda $ is
  \begin{displaymath}
    \Pi_{1,1} = \langle M, P^{-1}NMN^{-1}P \rangle\;\text{and}\; \Pi_{1,2} = \langle M^{-1}Q,P^{-1}M^{-1}Q P \rangle.
  \end{displaymath}
  With some experimentation we find that a compression body group where both of these groups are F-peripheral and where the three words
  representing $\Lambda$ are hyperbolic with trace $ \pm 3 $ corresponds to the $X$-coordinates
  \begin{displaymath}
    \alpha = 10-3i \sqrt{5},\; \beta = \frac{5}{29}(171+2i \sqrt{5}),\;\text{and}\;\sigma = \frac{4}{29}(3-2i \sqrt{5}).
  \end{displaymath}
  The limit sets of this group and the peripheral discs of $ \Pi_{1,1} $ and $ \Pi_{1,2} $ are shown in in \cref{fig:function}. We also show the isometric circles of
  the generators; the primitive hyperbolic word in the intersection $ \Pi_{1,1} \inter \Pi_{1,2} $ is $ M^{-1}P^{-1}NMN^{-1}P $. The Ford domain of $ \Pi_{1,1} $
  is already minimally bounded. Using \cref{lem:action_on_circles}, we can pick circles in the pencils of the two generators of $ \Pi_{1,2} $ which intersect
  the isometric circles of the intersection word, and compute the circles that they are paired with. Thus we can produce a pair of compatible minimally bounded
  fundamental domains, shown in \cref{fig:function2}. We also show the translates of the circles forming the domain to the four adjacent
  peripheral discs (the translating elements are the elements used in \cref{cons:stratification_of_surfaces} to produce an HNN extension: in this case, $ P^{\pm 1} $ and $(NP^{-1})^{\pm 1} $).
\end{ex}

\subsection{Global fundamental domains from circle chains}\label{sec:fundom_global}
We continue with the same notation as in the statement and proof of \cref{lem:compatible_min_bdd_domains}; so $G$ is a group on a pleating ray with
a fixed circle chain $ \mathbold{\Pi} $ and we have chosen minimally bounded fundamental domains for each F-peripheral group in $ \mathbold{\Pi} $;
the side-pairings on these domains induce a topological quotient on the union of the minimally bounded domains to form a surface $ S^* $ with
boundary components. These boundary components are paired up by the images in $ G $ of the stable elements of the various HNN extensions used
to produce the fundamental groups $ \pi_1(S_i) $ from the thrice-holed sphere groups $ \pi_1(P_i) $ (c.f.\ \cref{cons:stratification_of_surfaces}).

Take one such stable element $ g $; it pairs two boundary components by conjugation in $G$, so there are two elements $ h_1 $ and $ h_2 $ which
represent the boundary components of two F-peripheral groups in $ \mathbold{\Pi} $ so that $ h_2 = gh_1 g^{-1} $. Let $ \xi_1, \xi_1' $ be the
two circles constructed in \cref{lem:compatible_min_bdd_domains} which are paired by $ h_1 $, and let $ \xi_2,\xi_2' $ be the two circles paired
by $ h_2 $. In general it is not the case that $ g(\xi_1) = \xi_2 $ and $ g(\xi_1') = \xi'_2 $ and there is usually a `dogleg' in the topological
embedding $ \tilde{T} $ at the curves which are glued to turn $ S^* $ into $ S $. If we view the construction of the lemma as producing a formal
curvilinear polygon with edges paired by the primitive boundary hyperbolics and parabolics of the groups in $ \mathbold{\Pi} $, then
$ g $ maps the line $ L_1 = \Pen(h_1) \subset \P^3 $ to the line $ L_2 = \Pen(h_2) $ but does not send the marking $ (F_{1,1}, F_{1,2}) $
to the marking $ (F_{2,1}, F_{2,2}) $.
\begin{defn}\label{defn:holonomy}
  The unique hyperbolic M\"obius transformation with the same fixed points and axis as $ h_2 $ which sends $ (\phi(F_{1,1}), \phi(F_{1,2})) $ to $ (F_{2,1}, F_{2,2}) $
  will be called the \df{fine holonomy} of the stable element $ g $ with respect to the particular choice of compatible minimally bounded fundamental
  domains. There is a unique $ \delta\in\Z $ such that $ h_2^\delta (F_{2,1}) \between \phi(F_{1,1}) \between h_2^\delta(F_{2,2}) \between \phi(F_{2,2}) $ (where $ \between $ is the
  `betweenness' relation and can be taken to be an inequality on $ \R $ after choosing a frame) and we call $\delta$ the \df{coarse holonomy} or just \df{holonomy}
  of $ g $ in $ \langle h_2 \rangle $; $ h_2^\delta$ is the `best approximation' in $ \langle h_2 \rangle $ to the fine holonomy map.
\end{defn}
This definition is encoding a similar kind of idea to the shearing parameter studied by Parker and Series~\cite{parkerseries95}, except here we are not able to choose a
canonical curve on the surface from one side of the cut to the other that we can measure the complex distance between the start and endpoints of.

A choice of polygon where the fine holonomy associated to all of the HNN extensions is the identity map corresponds to a geometric dual graph for the bending
lamination (i.e.\ a graph which is geodesic on each flat piece and meets every pleating leaf orthogonally). It is not clear whether or not such a graph exists
in general, but in sufficiently symmetric settings it does. This includes \cref{ex:function}, where the pencils of the side-pairing maps in each peripheral disc are symmetric
with respect to the HNN extension stable elements.

Even in the case of nontrivial holonomies, we can use compatible minimally bounded domains to construct a fundamental domain for the action of $G$ on $ \Omega(G) $.
\begin{thm}[\cref{mainthm1}]\label{thm:funddom}
  Suppose that $ G $ is a convex cofinite manifold group admitting a $\Lambda$-circle chain and let $ X $ be a parameter space for $ X(G) $. There exists
  a decomposition of $ \mc{P}_\Lambda \subset X $ into countably many semi-algebraic subsets $ \mc{U}_{j} $ so that for each $ j $ there is a
  family of formal curvilinear polygons, parameterised $\R$-algebraically by $ \tilde{G} \in \mc{U}_j $, with side-pairing transformations generating $ \tilde{G} $;
  each polygon in this family induces a fundamental domain for the action of $ \tilde{G} $ on $ \Omega(\tilde{G}) $.
\end{thm}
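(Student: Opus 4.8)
The plan is to build the global domain by patching the compatible minimally bounded polygons of \cref{lem:compatible_min_bdd_domains} according to the amalgam/HNN decomposition of \cref{cons:stratification_of_surfaces}, and then to correct for the holonomy mismatch at the HNN gluings; the countable subdivision will record the integer coarse holonomies, which is exactly the data that can jump discontinuously as $ \tilde G $ moves through $ \mc{P}_\Lambda $. First I would fix a $\Lambda$-circle chain $ \mathbold{\Pi} = \{\Pi_{i,j}\} $ and apply \cref{lem:compatible_min_bdd_domains} to get compatible minimally bounded polygons $ D(\Pi_{i,j}) $ on the peripheral discs $ \Delta(\Pi_{i,j}) $. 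By \cref{lem:strongly_peripheral}, \cref{lem:neilsen_region_is_fd}, and \cref{lem:pl_ray_is_glued_discs} the Nielsen regions of the $ \Pi_{i,j} $ tile the convex core boundary, so the discs and their $ \tilde G $-translates cover $ \Omega(\tilde G) $ and each $ D(\Pi_{i,j}) $ is an exact fundamental domain for the stabiliser of its disc; the whole problem is therefore local, namely fitting the pieces together along their paired facets.

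I would first glue along the maximal tree $ T $. Two peripheral groups joined by a tree edge share a primitive element $ g $, and compatibility says that the facets of $ D(\Pi) $ and $ D(\Pi') $ paired by $ g $ lie on the same circles of $ \Pen(g) $; the polygons therefore meet exactly along these facets, and the union $ D = \bigcup_{i,j} D(\Pi_{i,j}) $ over $ T $ is a connected formal curvilinear polygon whose quotient is the cut-open surface $ S^* $. The unmatched facets of $ D $ are those paired by the HNN stable elements $ g $ of the non-tree edges, and here the construction is delicate: $ g $ carries $ \Pen(h_1) $ to $ \Pen(h_2) $, but in general $ g(\xi_1) \neq \xi_2 $, so the bare pairing by $ g $ fails to align with the chosen facets of $ D(\Pi_2) $.

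To repair this I would use the coarse holonomy $ h_2^{\delta} $, the best $ \langle h_2 \rangle $-approximation to the fine holonomy, gluing the HNN facet by $ g $ after absorbing the shift and inserting the bounded fan of translates $ \{ h_2^{k} : 0 \le k < \delta \} $ of the intervening sliver so that the facets match. The combinatorial type of the patched polygon is then determined precisely by the tuple $ (\delta_e) $ of coarse holonomies over the non-tree edges, and I would define $ \mc{U}_j $ to be the locus in $ \mc{P}_\Lambda $ where this tuple is constant. Since the markings $ F_{i,k} $ and the maps $ g $ vary $\R$-algebraically with the circle chain (\cref{lem:action_on_circles}, \cref{lem:action_on_circles2}), the requirement that the fine holonomy realise a prescribed $ \delta_e $ is, after choosing a frame, the polynomial betweenness condition $ h_2^{\delta_e}(F_{2,1}) \between g(F_{1,1}) \between h_2^{\delta_e}(F_{2,2}) \between g(F_{2,2}) $; hence each $ \mc{U}_j $ is semi-algebraic, there are only countably many, and on each the side-pairing data varies $\R$-algebraically, as required.

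The remaining step, which I expect to be the main obstacle, is to verify through the Poincar\'e polygon theorem that on each $ \mc{U}_j $ the patched polygon with side-pairings $ \{f_i\} \cup \{g\} $ is an honest fundamental domain for $ \tilde G $ on $ \Omega(\tilde G) $ and that these maps generate $ \tilde G $ with the correct relations. One must show that after the holonomy correction the facets are paired without degenerate overlaps, that the edge cycles at the vertices of the inserted fans close up with total angle $ 2\pi $, and that the quotient is exactly $ \Omega(\tilde G)/\tilde G $ with no spurious relations, while keeping track of the subtlety from \cref{cons:stratification_of_surfaces} that the projection $ \pi_1(S_i) \to \Sigma_i $ may trivialise some HNN stable elements. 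The geometry of \cref{lem:pl_ray_is_glued_discs} does the heavy lifting---the Nielsen regions already tile the convex core boundary---so what is genuinely at stake is the local matching at the HNN junctions, which the coarse-holonomy bookkeeping is designed to control.
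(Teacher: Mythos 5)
Your overall skeleton matches the paper's: glue the compatible minimally bounded polygons of \cref{lem:compatible_min_bdd_domains} along the tree, record the integer coarse holonomies at the HNN gluings, define each $\mc{U}_j$ as the locus where this tuple is constant, and cut it out by betweenness/incidence inequalities in the algebraically-varying circle data. But there is a genuine gap at the one step you identify as delicate and then wave through: the matching at the HNN junctions. Your repair---glue by $g$ ``after absorbing the shift'' and insert the fan of translates $\{h_2^k : 0 \le k < \delta\}$ of the intervening sliver ``so that the facets match''---cannot work, because the fine holonomy is generically \emph{not} an element of $\langle h_2 \rangle$. Inserting integer $h_2$-translates of a sliver accomplishes nothing more than replacing the pairing $g$ by $g h_2^{\pm\delta}$: it corrects only the integer part of the mismatch, a fractional sliver is left over, and $g h_2^{\delta}$ carries the chosen facet of the upstream polygon to a circle lying strictly between $F_{2,1}$ and $h_2(F_{2,1})$. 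The facets therefore do \emph{not} match, and since side-pairing transformations must be honest elements of $\tilde{G}$ (there is no ``fractional power'' of $h_2$ available in the group), the Poincar\'e-style verification you defer to the final paragraph fails at exactly this point.

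The paper's resolution is a different device, and it is the heart of its proof of \cref{thm:funddom}: introduce an auxiliary gluing arc $C(h_i)$, an arc of a circle through the two fixed points of $h_i$ (e.g.\ its axis, lying in the pencil orthogonal to $\Pen(h_i)$), split it at an \emph{interior vertex} into subarcs, and pair the two halves by two \emph{different} group elements: $g_i h_i^{\delta_i}$ on the central subarc and $g_i h_i^{\delta_i + 1}$ on the outer one. Because the fine holonomy lies strictly between the powers $\delta_i$ and $\delta_i + 1$, this cut-with-two-pairings absorbs the fractional discrepancy exactly; this is the mechanism your construction is missing. There is also a structural difference worth noting: in the paper, exactness of the resulting domain does not rest on the Poincar\'e polygon theorem at all. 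The domain of \cref{lem:compatible_min_bdd_domains} is produced by geodesically embedding a tree in the pleated surface, cutting, lifting to the convex hull boundary, and projecting to $\hat{\C}$, so it is a cut-open copy of $\Omega(\tilde{G})/\tilde{G}$ by construction, and what remains to check is only that the choices can be made algebraically in the parameters. Reversing this logic, as you do---patch formal polygons first, verify fundamentality afterwards---converts the ``heavy lifting'' you attribute to \cref{lem:pl_ray_is_glued_discs} into an unproved angle-sum and edge-cycle verification, which, with the fan insertion in place of the two-pairing cut, is not merely unproved but false.
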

\begin{proof}
  We first show that we can extend the construction of \cref{lem:compatible_min_bdd_domains} to produce a fundamental domain for $ G $. Fix a $\Lambda$-circle chain
  $ \mathbold{\Pi} = \{ \Pi_{i,1},\ldots,\Pi_{i,m_i} \}_{i=1}^n $ for $ G $. As in the proof of \cref{lem:compatible_min_bdd_domains} we consider the surfaces $ S^*$ obtained by cutting $ \Omega(G)/G $
  along the leaves of $ \Lambda $ that are not dual to the spanning tree of $ \Gamma(G) $ arising from the circle chain. Let $ g_1,\ldots,g_n \in G $ be choices of images of elements
  for the HNN extensions that glue the geodesic boundaries of these surfaces together (in other words, the $ g_i $ are the elements of $ G $ that generate the action of $ G $ on the graph $ \FPer(G) $ from
  the fundamental domain given by the $ \mathbold{\Pi} $). Suppose that the boundary hyperbolics of the various F-peripheral groups are $ h_1, \ldots, h_s $
  and $ g_1h_1 g_1^{-1}, \ldots, g_s h_s g_s^{-1} $. For each $ i \in \{1,\ldots,s\} $, we let:
  \begin{itemize}
    \item $ \delta_i $ be the coarse holonomy of $ g_i $ in $ \langle h_i \rangle $;
    \item $ \Pi_i $ be the F-peripheral subgroup in $ \mathbold{\Pi} $ that contains $ h_i $;
    \item $ \hat{C}(h_i) $ be a circle passing through the fixed points of $ h_i $, and
    \item $ C(h_i) = \hat{C}(h_i) \inter \Delta(\Pi_i) $.
  \end{itemize}
  For example, one choice of $ \hat{C}(h_i) $ gives $ C(h_i) = \Ax(h_i) $ (the translation axis of $ h_i $ as it acts as a hyperbolic isometry on $ \Delta(\Pi_i) $);
  alternatively one might take $ C(h_i) $ to be the Euclidean segment joining the two fixed points of $ h_i $.

  We use these data to produce a formal curvilinear polygon with side-pairing
  structure. First, take the circles used to define the compatible minimally bounded domain produced in \cref{lem:compatible_min_bdd_domains}: for each F-peripheral group in $ \mathbold{\Pi} $
  we have three pairs of circles paired by a primitive boundary parallel element which match up along intersections of peripheral discs. To these, we must add sides that will be paired
  up to glue together the boundary components of $ S^* $ to form $ S=\Omega(G)/G $. As we continue with this construction, the discussion may be clarified with reference to \cref{fig:dogleg}.

  \begin{figure}
    \labellist
    \small\hair 2pt
    \pinlabel {\color{red}$h_i$} [tr] at 31 96
    \pinlabel {$g_i h_i^{\delta_i}$} [r] at 36 67
    \pinlabel {$g_i h_i^{\delta_i+1}$} [r] at 42 40
    \pinlabel {\color{blue}$g_i h_i g_i^{-1}$} [r] at 71 22
    \pinlabel {$g_i h_i^{\delta_i+1}$} [l] at 281 132
    \pinlabel {$g_i h_i^{\delta_i}$} [l] at 274 56
    \endlabellist
    \centering
    \includegraphics[width=\textwidth]{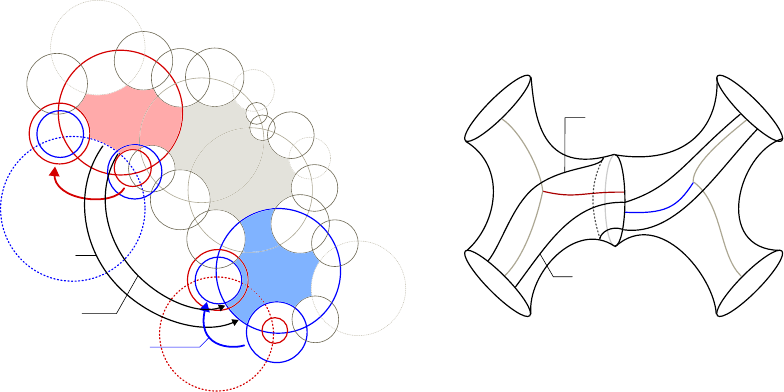}
    \caption{A dogleg arising from choosing a realisation of $ \tilde{T} $ that has holonomy. There are two valid choices for the stable element realising the gluing of the two thrice-holed
    spheres, which differ by a Dehn twist. Left: the lift of a portion of a circle chain to $ \hat{\C} $, along with the side-pairing transformations that are conjugated by $ g_i h_i^{\delta_i} $
    (here, $ \delta_i = 0 $). Right: a section of the pleated surface facing $ S_i $ that includes the pleat represented by $ g_i $; the embedding
    of $ \tilde{T} $ into $ S_i^* $ does not match across the join. \label{fig:dogleg}}
  \end{figure}

  The sides paired by the stable elements will be supported on the circles $ \hat{C}(h_i) $ and $ g_i\hat{C}(h_i) $; it remains to choose suitable side-pairing transformations here.
  Consider the circular arc $ C(h_i) $; this arc meets the two circles paired by $ h_i $. In addition, the two circles paired by $ g_i h_i g_i^{-1} $
  are mapped by $ (g_i h_i^{\delta_i})^{-1} $ to two circles that meet $ C(h_i) $. This gives four circles in total that meet $ C(h_i) $, and these circles cut out three subarcs of the arc $ C(h_i) $.
  Similarly, we obtain three subarcs of the arc $ g_i C(h_i) $ cut out by the $g_i$-translates of the circles paired by $ h_i $, together with the circles paired by $ g_i h_i g_i^{-1} $. The central
  subarc of $ C(h_i) $ is sent to the central subarc of $ g_iC(h_i) $ by $ g_i h_i^{\delta_i} $. If $ \Pi_i \in \mathbold{\Pi} $ is the F-peripheral group containing $ h_i $ then a second
  of the three subarcs of $ C(h_i) $ is contained within the fundamental domain of $ \Pi_i $. Similarly if $ \Pi_i' \in \mathbold{\Pi} $ is the F-peripheral group containing $ g_i h_i gi^{-1} $
  then there is a second subarc of $ g_i C(h_i) $ contained within the fundamental domain of $ \Pi_i' $, and these two subarcs are paired by $ g_i h_i^{\delta_i + 1} $.

  In summary, we obtain a formal curvilinear polygon with side-pairing structure consisting of the circles and side-pairings from the compatible minimally bounded domains of the circle chain groups, together
  with the circles $ \hat{C}(h_i) $ and $ g_i\hat{C}(h_i) $, which each appear twice: once paired by $ g_i h_i^{\delta_i} $ and once by $ g_i h_i^{\delta_i + 1} $. This structure
  forms a fundamental domain when an additional vertex is added in the interior of each $ C(h_i) $ as just described, with a different side-pairing on each half.

  We now show that this definition can be chosen to vary algebraically as we deform $ G $ to $ \tilde{G} $ along $ \mc{P}_\Lambda $. To do this we first make a canonical choice
  for $ \hat{C}(h_i) $, namely we take $ \hat{C}(h_i) = \partial g_i^{-1}\Delta(\Pi_i') $ where $ \Pi_i' $ is the peripheral group in $ \mathbold{\Pi} $ which contains $ g_i h_i g_i^{-1} $
  (this is not $ g_i \Pi_i $, which does not lie in $ \mathbold{\Pi} $); the location of this circle clearly depends algebraically on the trace parameters of $ G $.

  The only other arbitrary choices involved in the construction are those in \cref{lem:compatible_min_bdd_domains}: an arbitrary choice of first base point, and an arbitrary choice of point on each
  fired geodesic in the interior of a flat piece. We can choose the latter algebraically in terms of trace parameters: take a group $ \Pi \in \mathbold{\Pi} $ with the three primitive
  boundary-parallel elements $ A $, $ B $, and $ C $, and suppose that we have already chosen circles paired by $ A $. Draw the geodesic from $ \Fix^- A $ to $ \Fix^- B $. This geodesic intersects
  the circle paired by $ A $ which contains $ \Fix^- A $; this intersection point is within the hexagon bounded by $ \Ax(A) $, $ \Ax(B) $, and $ \Ax(C) $. Lifting to the hyperbolic convex hull
  boundary and projecting to the pleated surface, the intersection point lies in the interior of the flat piece uniformised by $ \Pi $. In addition, it is defined algebraically in terms of trace data
  (up to quadratic extensions to compute fixed points).

  We now show that the arbitrary first choice of base point can be made in a way that varies algebraically with trace data. Denote the first element whose paired circles need to be defined
  by $ g $ (i.e.\ $ g $ is the boundary parallel element of an F-peripheral group which corresponds to the unique leaf in $ \tilde{T} $ with edge oriented outwards). Pick the circles paired
  by $ g $ arbitrarily for the basepoint $ G $, say $ C_G $ and $ g C_G $. In terms of the frame $ \{ \Fix^-(g), I^-(g), I^{+}(g) \} $ for $ \Pen(g) $ set up by the conformal data of $ g $, the
  point $ C_G $ is determined by a single real number: namely, the number $ x $ where $ C_G = x \Fix^-(g) + I^{-}(g) $ in the homogeneous coordinates defined by $ \Fix^{-}(g) = [1:0] $,
  $ I^+(g) = [0:1] $, and $ I^-(g) = [1:1] $). If $ G $ deforms algebraically to $ \tilde{G} $, then the isometric circles of $ g $ deform algebraically to isometric circles of $ \tilde{g} $ where $ \tilde{g} $
  is the image of $ g $ under the representation $ G \to \tilde{G} $). We choose the circles paired by $ \tilde{g} $ to be $ C_{\tilde{G}} = x I^+(\tilde{g}) + I^{-}(\tilde{g}) $ and $ \tilde{g} C_{\tilde{G}} $.
  This choice clearly moves algebraically with $ \tilde{G} $, so we have shown that we can define the edges of our fundamental domains to move algebraically on $ \mc{P}_\Lambda $.

  It is possible for the coarse holonomy of the compatible minimally bounded domains to change as we move along the pleating ray; the places where this occurs
  are detected by inequalities on the the positions of the $ g_i$-translates of the circles $ \hat{C}(h_i) $, each set corresponding to a particular choice of $ \delta_i $ needed
  to make the fundamental domains at each end of the tree $ \tilde{T} $ meet along a nonempty arc after $ g_i$-translation. Within each of these semi-algebraic sets the side-pairing
  transformations between $\hat{C}(h_i)$ and $ g_i\hat{C}(h_i) $ move algebraically and so these sets form the countably many subsets $ \mc{U}_j $, cut out by geometric inequalities
  that encode the property that the corresponding formal curvilinear polygon with side-pairing structure actually forms a fundamental domain.
\end{proof}

\begin{cor}\label{cor:h3map}
  Suppose that $ G $ is a convex cofinite manifold group admitting a $\Lambda$-circle chain and let $ X $ be a parameter space for $ X(G) $. There exists
  a decomposition of $ \mc{P}_\Lambda \subset X $ into countably many semi-algebraic subsets $ \mc{U}_{j} $ so that, for each $ j $,
  there is a map from $ \mc{U}_{j} $ into the space of finite-sided polyhedra in $ \H^3 $ which sends $ \tilde{G} \in \mc{U}_j $
  to a fundamental polyhedron for $ \tilde{G} $.
\end{cor}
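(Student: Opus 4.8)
\section*{Proof proposal for \texorpdfstring{\cref{cor:h3map}}{the corollary}}

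The plan is to promote the conformal fundamental domains of \cref{thm:funddom} to fundamental polyhedra by capping each side circle with the totally geodesic hyperbolic plane it bounds. I would keep the very same semi-algebraic decomposition $\{\mc{U}_j\}$ furnished by \cref{thm:funddom}, and for $\tilde{G} \in \mc{U}_j$ take its formal curvilinear polygon $D(\tilde{G})$, with finitely many side circles $C_1,\ldots,C_N \in \P^3$ paired by transformations $\phi_1,\ldots \in \tilde{G}$ that generate $\tilde{G}$. For each $\ell$ let $\hat{H}_\ell \subset \H^3$ be the dome over $C_\ell$, that is the unique totally geodesic plane with $\partial_\infty \hat{H}_\ell = C_\ell$; in the projective model of circles this plane is recorded algebraically by the homogeneous coordinates of $C_\ell$, via the copy of $\H^3$ sitting inside the $\P^3$ of circles \cite[Vol.\ II,~20.2.5]{berger}. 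I then set $P(\tilde{G}) = \bigcap_\ell \mc{H}_\ell$, where $\mc{H}_\ell$ is the closed half-space bounded by $\hat{H}_\ell$ on the side of $D(\tilde{G})$. Since Poincar\'e extension is equivariant and isometries of $\H^3$ carry totally geodesic planes to totally geodesic planes, whenever $\phi$ pairs the arc on $C$ to the arc on $C'$ it also sends $\hat{H}$ to $\hat{H}'$; hence the side-pairing structure of $D(\tilde{G})$ lifts verbatim to a pairing of the faces of $P(\tilde{G})$, with the same generators and with $P(\tilde{G}) \inter \Omega(\tilde{G}) = D(\tilde{G})$. Note that a single circle crossing several components of $\Omega(\tilde{G})$ still contributes one face, so there is no overcounting.

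Next I would verify that $P(\tilde{G})$ is a fundamental polyhedron in the sense of \cite[\S IV.F]{maskit}. It is finite-sided, with one face per side circle. The tessellation property I would obtain from Poincar\'e's polyhedron theorem, transferring its hypotheses from the fact, already established in \cref{thm:funddom}, that $D(\tilde{G})$ tessellates $\Omega(\tilde{G})$. The mechanism is that the endpoints of the edges of $P(\tilde{G})$ are exactly the vertices of $D(\tilde{G})$ where side circles meet, the cycle of face-pairings around an edge is the cycle of side-pairings around that vertex, and, crucially, the dihedral angle between two domes equals the conformal angle between their boundary circles; so the $\H^3$ edge-cycle angle relations coincide with the $\hat{\C}$ vertex-cycle relations that hold because $D(\tilde{G})$ is a fundamental domain. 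Parabolic cycles match the cusps of $\Omega(\tilde{G})/\tilde{G}$ and are dealt with by the standard cusped refinement of the theorem. Completeness, local finiteness, and the claim that the faces facing the limit set accumulate only on $\Lambda(\tilde{G})$ I would extract from geometric finiteness of $\tilde{G}$ together with the convex-core description in \cref{lem:pl_ray_is_glued_discs}, which identifies the part of $\partial P(\tilde{G})$ over the bending locus with a fundamental set for the flat pieces $\Delta(\Pi_{i,j})/\Pi_{i,j}$.

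For the algebraic statement, \cref{thm:funddom} gives that the circles $C_\ell$ vary $\R$-algebraically with $\tilde{G} \in \mc{U}_j$, and each dome $\hat{H}_\ell$ is an algebraic function of the homogeneous coordinates of $C_\ell$; thus $\tilde{G} \mapsto (C_1,\ldots,C_N) \in (\P^3)^N$ realises the desired map into the configuration space of finite-sided polyhedra, coordinatised by their bounding planes, and is itself $\R$-algebraic on each $\mc{U}_j$. I expect the genuine obstacle to be the completeness half of the Poincar\'e verification, namely ruling out a failure of the tessellation that accumulates on the limit set, rather than the lift itself; this is precisely where geometric finiteness and \cref{lem:pl_ray_is_glued_discs} carry the argument, whereas transferring the finite edge-cycle conditions from $D(\tilde{G})$ is routine once the equality of dome dihedral angles with circle angles is in hand.
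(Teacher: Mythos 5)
Your key claim---that $P(\tilde{G}) = \bigcap_\ell \mc{H}_\ell$ satisfies $P(\tilde{G}) \inter \Omega(\tilde{G}) = D(\tilde{G})$ and lifts the side-pairing structure verbatim---is false, and this is exactly the gap the paper's own proof is designed to work around. A curvilinear polygon is \emph{not} the intersection of the disc-sides of its full side circles: the sides of $D(\tilde{G})$ are arcs, and the region of $\hat{\C}$ cut out globally by the full circles is much larger than $D(\tilde{G})$. Concretely, in the setting of \cref{ex:apollonian} the conformal domain is a union of hexagons inside two tangent peripheral discs, while the set of points lying on the $D$-side of every full side circle also contains (essentially all of) every \emph{other} peripheral disc of the circle packing; so $P(\tilde{G}) \inter \Omega(\tilde{G}) \supsetneq D(\tilde{G})$. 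Since the $\tilde{G}$-translates of $D(\tilde{G})$ tile $\Omega(\tilde{G})$, there is some $g \neq 1$ with $g D(\tilde{G})$ meeting the interior of the ideal boundary of $P(\tilde{G})$, and then points of $\H^3$ close to such ideal points lie in both $P(\tilde{G})$ and $gP(\tilde{G})$: the translates of your polyhedron overlap on open sets, so it is not a fundamental polyhedron, and no bookkeeping of edge cycles, dihedral angles, or parabolic cycles at the vertices of $D(\tilde{G})$ can repair this. The underlying obstruction is structural: the conformal data controls the group action only outside $\hconv \Lambda(\tilde{G})$, and any polyhedron manufactured purely from the side circles fails (by overlap or undercoverage) inside the convex hull of the limit set, which has nonempty interior for the groups at hand.

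The paper's proof therefore does something genuinely different. It takes the hyperbolic convex hull $\hat{D}$ of $D$, observes that this is a fundamental domain only for the action on $\Omega(\tilde{G}) \union (\H^3 \setminus \hconv\Lambda(\tilde{G}))$, and then fills in the missing part of $\H^3$ by a Dirichlet-type construction that is \emph{not} determined by the conformal data: the translates of $\hat{D}$ induce a cell structure on $\partial \hconv\Lambda(\tilde{G})$, each cell yields a finite set of seed points (intersections of bisectors of the boundary pleats), and the region $V_1$ inside the convex hull is taken to be the union of the Voronoi cells of the seeds lying over $\hat{D}$; then $\hat{D} \union V_1$ is cut out by finitely many planes, and finite-sidedness follows from geometric finiteness. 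This also shows why your closing claim of $\R$-algebraicity cannot be right and contradicts the remark immediately following \cref{cor:h3map}: facets of the interior Voronoi cells pop in and out of existence under arbitrarily small deformations, so the map into polyhedra is only Hausdorff-continuous on each $\mc{U}_j$, which is precisely why the corollary claims less regularity than \cref{thm:funddom}. Your instinct that the difficulty sits near the limit set is pointing at the right region of $\H^3$, but it is not a completeness verification for your $P(\tilde{G})$; it is that a new construction, invisible to the conformal boundary, is required there.
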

\begin{proof}
  By taking hyperbolic convex hulls, \cref{thm:funddom} furnishes us with a fundamental domain $D$ for the action of $ \tilde{G} $ on
  \begin{displaymath}
    \Omega(\tilde{G}) \union (\H^3 \setminus \hconv\Lambda(\tilde{G})),
  \end{displaymath}
  i.e.\ on a lift of the exterior of the convex core of $ \H^3/\tilde{G} $; this fundamental domain detects the real ends of the manifold and so is
  enough to pin down $ \tilde{G} $ using \cref{thm:mardentukia}. It can be improved to give a fundamental domain for the total action of $ \tilde{G} $
  on $ \H^3 $ by a Voronoi diagram construction generalising the Dirichlet construction~\cite[\S IV.G]{maskit}. Let $ \hat{D} $ be
  the hyperbolic convex hull of $ D $, and let $ \mc{D} $ be the set of $\tilde{G}$-translates of $ \hat{D} $. Since $ \Stab_{\tilde{G}} D $ is
  trivial, we can enumerate $ \mc{D} $ irredundantly as $ \{ g\hat{D} : g \in \tilde{G} \} $. Intersecting these sets with $ \partial \hconv \Lambda(G) $
  gives a tiling of a topological sphere in $ \overline{\H^3} $ where each tile $ g\hat{D} \inter \partial \hconv \Lambda(G) $ is made up of finitely many
  flat hyperbolic polygons (which we will call \df{cells}) glued along angles.

  For all $ g \in \tilde{G} $ we define a Voronoi region
  \begin{displaymath}
    V_g \coloneq  \left\{ x \in \left(\H^3 \setminus \cup \mc{D}\right) : \forall_{g' \in \tilde{G}}\ \rho(x, g\hat{D}) \leq \rho(x, g'\hat{D}) \right\}.
  \end{displaymath}
  where $ \rho $ is the hyperbolic metric. The set $ \hat{D} \union V_1 $ is a fundamental set for the action of $ \tilde{G} $ on $ \H^3 $.

  We now show that $ V_1 $ is cut out by finitely many planes in $ \H^3 $, i.e.\ it is a hyperbolic polyhedron. Define for each cell on the hyperbolic
  convex hull boundary a set of pleated geodesics that are hyperbolic bisectors of the boundary pleats. In general these will not meet at a unique
  point in the cell, but they will meet at a finite set of points within each cell. Since $ \tilde{G} $ acts discontinuously on $ \H^3 $ and the
  convex core boundary is embedded in $ \H^3 $, the set of all of these points over all cells is discrete. Use these points as seeds for a classical
  Voronoi diagram in $ \H^3 $; then $ V_1 $ is the union of each of the Voronoi regions with seeds in the cell above $ \hat{D} $. In particular,
  it is cut out by planes. Since $ V_1 \union \hat{D} $ is cut out by planes it is a fundamental polyhedron; but every fundamental polyhedron for $ \tilde{G} $
  is finitely sided since $\tilde{G}$ is geometrically finite~\cite[\S VI.E.2]{maskit} and so we have the desired result.
\end{proof}

\begin{rem}\label{rem:h3map}
  The map of \cref{cor:h3map} is not necessarily algebraic even within each $ \mc{U}_j $, since small movements of the parameter will in general cause new facets
  of the Voronoi cells in $ \H^3 $ to pop into existence. We only have algebraic control over the conformal boundaries of the domains involved. This is a manifestation
  of the remarks in \cref{sec:locglo} on the relationship between the group $ G $ globally and its pieces $ \Sigma_i $ `near' the conformal surfaces; we know what
  the $ \Sigma_i $ actions look like (the conformal action) but we do not know how they glue together in $ \H^3 $ without special information in particular cases.
  The maps will still be continuous in the topology induced by the Hausdorff metric on closed sets~\cite[Vol. I, \S9.11]{berger} (more precisely, cut the polyhedra
  up into two pieces along the convex core boundary; the boundary neighbourhood piece exterior to the convex core moves continuously by \cref{thm:funddom}, and the convex piece moves continuously
  by the continuity of Voronoi regions with respect to the Hausdorff metric~\cite[\S 9.1.2]{aurenhammer}).
\end{rem}

\begin{ex}[The case of $k$-punctured sphere groups with a single end]\label{ex:punc_sph_2}
  Specialise to the case that $ M = \H^3/G $ has a single conformal end, $ S $. Suppose that the conformal structure on $ S $ is a
  $k$-punctured sphere, so the lamination $ \Lambda $ consists of $ k $ loops of length $0$ together, completed to a maximal
  lamination arbitrarily; then the dual graph to the non-zero-length loops is a tree. Use this dual graph to produce the
  surface $ S^* $, which is again a $k$-punctured sphere.

  Suppose that $ h_1$ and $ h_2 $ are elements corresponding to boundary components of $ S^* $, so both are parabolic. In this special
  case, the arcs $ C(h_i) $ can be chosen to be degenerate, consisting of exactly the fixed point of the parabolic. Thus the fundamental
  domain described in \cref{thm:funddom} does not have any edges beyond those coming from the F-peripheral groups of $G $,
  and in particular there is no interaction of holonomy with the fundamental domain: we do not need to replace the elements $ h_1 $ and
  $ h_2 $ with twisted elements. Another way of seeing this is to observe that the conjugating element does not represent a loop on the
  surface $ S $, and so \emph{cannot} be visible from the image of the fundamental group of $ S $ in $ G $.

  Since $ \Lambda $ was arbitrary, this shows that all convex cofinite manifold groups $ G $ on any $ \Lambda$-pleating variety
  such that $ \Omega(G)/G $ is a $k$-punctured sphere can be given a combinatorially consistent fundamental domain that depends
  algebraically on the parameterisation of the pleating variety. This includes, as a special case, \cref{ex:punc_sph_1}.
\end{ex}

\subsection{Deformations off pleating varieties via twisting}\label{sec:twisting}
We wish to extend the map of \cref{thm:funddom} to give fundamental domains of groups in open neighbourhoods of the sets $ \mc{U}_j $. The na\"ive picture
is that we replace F-peripheral groups with quasi-Fuchsian peripheral groups, and consider pullbacks of $ \mathbold{S}_\Lambda $ under $ \mathbold{\chi}_\Lambda $
instead of only pullbacks of $ \mathbold{S}_\Lambda(\R) $. The problem is that it is not possible to semi-algebraically detect when a `quasi-circle chain' ceases to model the convex
core boundary, since there is no analogue of \cref{cor:incidence_inequalities}.

To get around this problem, we work directly with algebraic families of formal curvilinear polygons with side-pairing structures. Given a small motion
of the parameters in $ X $ off a pleating variety, we define (purely formally) a small deformation of the corresponding fundamental domain as constructed in the
previous section in such a way that the result is still a fundamental domain within a computable semi-algebraic set in $X$ (defined in terms of incidence geometry in the
formal curvilinear polygon). The procedure is analogous to a construction in our earlier joint work on the Riley slice with Martin and Schillewaert~\cite[\S 5.3]{ems21},
but has additional difficulties which could be ignored in that paper where only a single class of well-understood groups was studied.

We will continue with the notation we have been using throughout the paper: let $ G \leq \PSL(2,\C) $ be a convex cofinite manifold group which admits a $\Lambda$-circle
chain $\mathbold{\Pi} $ for some maximal rational lamination $ \Lambda $. Let $ T $ be the corresponding maximal forest in $ \Gamma(\Lambda) $, whose vertices and edges
we associate with elements of $ \mathbold{\Pi} $ and their intersections; and let $ S^* $ be the (possibly disconnected) subsurface of $ S = \bigcup_{i=1}^n S_i = \Omega(G)/G $
corresponding to the deletion of all leaves of $ \Lambda $ that do not intersect an arc of $ T $. In addition, let $ G(t) $ be a family of representations in $ \Hom(G,\PSL(2,\C)) $
that depends algebraically on $ t \in X $ for some parameter space $ X \subseteq \C^N $, so that $ G(0) = G $.

\begin{cons}\label{cons:twisting}
  Let $ D_0 $ be a choice of grafted fundamental domain (\cref{thm:funddom}) for $ G(0) $: it is a formal curvilinear polygon with side-pairing structure,
  which also happens to satisfy sufficient incidence conditions to guarantee that it is a fundamental domain for the action of $ G(0) $ on $ \hat{\C} $.
  Let $ \mc{U} \subseteq \mc{P}_\Lambda \subset X $ be the semi-algebraic set defined in \cref{thm:funddom} which contains $ G(0) $ and within which we have
  a family $ \{D_t : t \in \mc{U} \} $ of algebraically varying fundamental domains for $ G(t) $. We will construct an extension of this family indexed by $ X $ rather
  than $ \mc{U} $.

  \begin{subcons}[Definitions for the side-pairing maps]
  Each edge in $ T $ corresponds to a single cyclic group with primitive generator in one of the groups $ \pi_1(S_i) $ representing a leaf of $ \Lambda_i $. This
  gives a family of elements $ \{ f_e(0) : e \in E(T) \} $ where each element is a primitive hyperbolic or parabolic element of $ G(0) $;
  all the $ f_e(0) $ are the images in $ G $ of pairwise non-conjugate elements in the surface fundamental groups. For general $ t \in X $, define $ f_e(t) $ for $ e \in E(T) $
  to be the image of $ f_e(0) $ under the canonical map $ G \to G(t) $.

  For each boundary component or non-paired puncture of $ S^* $ (there are two for each leaf of $ T $), we can choose a corresponding primitive
  hyperbolic or parabolic element of $ G $. These primitive elements come in $G$-projections of conjugate pairs in the surface fundamental
  groups $ \pi_1(S_i) $. Arbitrarily choose one conjugacy representative $ h_j(0) $ from each pair, and let $ g_j(0) $ be the corresponding stable
  element of the HNN extension so that $ h_j(0)^{g_j(0)} $ represents the second element. We have chosen elements $ h_1(0),\ldots,h_m(0) $ and $ g_1(0),\ldots,g_m(0) $
  in $ G(0) $, and we can define $ h_1(t),\ldots,h_m(t) $ and $ g_1(t),\ldots,g_m(t) $ in $ G(t) $ to be their images under the canonical map $ G \to G(t) $.
  \end{subcons}

  \begin{subcons}[Deforming the skew-hexagon domains of peripheral groups]
  The proof of \cref{thm:funddom} gives us, for each $ j \in \{1,\ldots,m\} $ (resp. $ e \in E(T) $) and all $ t \in \mc{U} $, a pair of circles in $ \Pen(h_j(t)) $
  (resp. $\Pen(f_e(t))$) that are paired by $ h_j(t) $ (resp. $f_e(t)$). These circles were defined inductively and completely algebraically. Note that the inductive
  step for an element in some peripheral group $\Pi$ involves finding a certain circle that is orthogonal to $ \Delta(\Pi) $ at two points. Replacing `orthogonal
  to $ \Delta(\Pi)$' with `orthogonal to the circle through the three fixed points $ \Fix^+ A, \Fix^+ B, \Fix^+ C $' (where $ A$,$B$, and $C$ are primitive elements
  representing the three boundary components of $ \Delta(\Pi)/\Pi$), the circles obtained can be viewed as algebraically varying with parameters in $ X $ rather than
  just parameters on $ \mc{P}_\Lambda $. In addition, so long as the six circles for each $ \Pi \in \mathbold{\Pi} $ do not change their incidence structure (i.e.
  they continue to form a skew-hexagon in $ \P^3 $), the sum of the angles at the three points of intersction will remain constant at $ 2\pi $. This is because
  the sum of the angles is an analytic function (not algebraic, since it involves $ \arccos $) in the parameters $ X $ which is constant on the
  uncountable set $ \mc{U} $, so is constant in the entire connected component of its domain that contains $ \mc{U} $.
  \end{subcons}

  \begin{subcons}[Deforming the arcs paired by the HNN extensions]
  In addition to the circles just defined, the fundamental domains $ D_t $ contain certain circles paired by the stable elements $ g_j(t) $ (possibly twisted by $ h_j(t) $).
  We recall that there was, in the proof of \cref{thm:funddom}, a choice involved here: namely, we needed to choose a circle $ \hat{C}(h_j(t)) $ in the pencil of circles through
  the fixed points of $ h_j(t) $ (i.e.\ the orthogonal pencil to $ \Pen(h_j(t)) $). We will take the particular choice that $ \hat{C}(h_j(t)) $ is the Euclidean line through
  the fixed points (so $ C(h_j(t)) $ is the Euclidean segment joining the fixed points). This obviously moves algebraically with $ t \in \mc{U} $; we need to define a family
  of circles parameterised by $ t \in X $ which specialises to this choice on $ \mc{U} $ and which continues to satisfy the relevant holonomy conditions from the definition
  of a fundamental domain. Concretely, for $ t \in X $ and for each $ j $, we define (analogously to~\cite{ems21}) a twisted line that joins the two adjacent circles paired by $ h_j(t) $;
  the procedure is illustrated in \cref{fig:twisted_line}:

  \begin{figure}
    \labellist
    \small\hair 2pt
    \pinlabel {$F_{j',1}$} at 20 59
    \pinlabel {$F_{j,1}$} at 65 59
    \pinlabel {$F_{j',2}$} at 253 59
    \pinlabel {$F_{j,2}$} at 296 59
    \pinlabel {$w$} [b] at 80 88
    \pinlabel {$z$} [bl] at 114 72
    \pinlabel {$L$} [b] at 170 61
    \pinlabel {$\tilde{L}$} [tr] at 170 44
    \pinlabel {$w'$} [tr] at 222 15
    \endlabellist
    \centering
    \includegraphics[width=.6\textwidth]{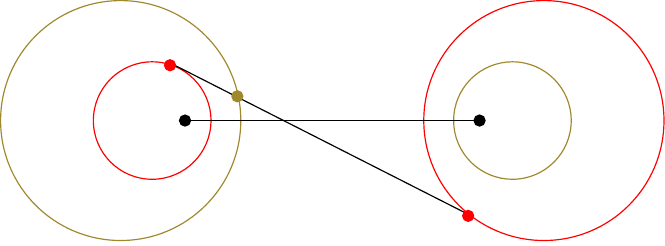}
    \caption{The `twisted' line that comes in two segments paired by $ g_j(t) h_j(t)^{\delta} $ and $g_j(t) h_j(t)^{\delta+1}$; the endpoints of $ \tilde{L} $
             on the circles $ F_{j,1} $ and $ F_{j,2} $ are correctly paired by $ h_j $. We also show $ F_{j',1} $ and $ F_{j',2} $, the inverse images under $ g_j $
             of the circles paired by $ g_j h_j g_j^{-1} $.\label{fig:twisted_line}}
  \end{figure}

  \begin{enumerate}[label={(\roman*)}]
    \item Let $ F_{j,1} $ and $ F_{j,2} $ be the two circles paired by $ h_j(t) $; let $ F'_{j,1} $ and $ F'_{j,2} $ be the circles paired by $ g_j(t) h_j(t) g_j^{-1}(t) $. Let $ \delta_j \in \Z $ be
          the coarse holonomy of $ g_j(t) $---by definition of $ \mc{U} $ this is a constant, independent of $ t $.
    \item Let $ L $ be the (Euclidean) line through the fixed points of $ h_j(t) $. The coordinates in $ \P^3 $ of this line can be determined algebraically in terms
          of the fixed points by \cref{lem:action_on_circles2}.
    \item Let $ w $ and $ w' $ be the intersection points of $ L $ with $ F_{j,1} $ and $ F_{j,2} $ respectively. This involves solving a quadratic equation in one variable.
    \item Compute the square root of the rotational part of $ h_j(t) $: change coordinates so that the fixed points of $ h_j(t) $ are $ 0 $ and $ \infty $, and $ h_j(t)(z) = \lambda^2 z $
          for $ \lambda \in \C $ (which will be an eigenvalue of $ h_j(t) $ as a $2\times 2$ matrix), then the desired map is $ \phi(z) = \lambda/\abs{\lambda} z$; this is $\R$-algebraic
          in the coefficients of $ h_j(t) $, modulo a single square root. Define $ \tilde{L}_j(t) $ to be the Euclidean segment through $ \phi^{-1}(w) $ and $ \phi(w') $.
    \item The circle $ h_j(t)^{\delta_j} ( g_j(t)^{-1}(F'_{j,1})) $ is algebraically determined by part (ii) \cref{lem:action_on_circles2}, i.e.\ it is the image under $ g_j(t)^{-1} $
          of the circle through the intersection points of $ F'_{j,1} $ with another circle in the same minimally bounded domain and with the boundary of the peripheral disc
          it is orthogonal to. Let $ z $ be the intersection of $ \tilde{L}_j(t) $ with $ h_j(t)^{\delta_j} ( g_j(t)^{-1}(F'_{j,1})) $.
  \end{enumerate}
  \end{subcons}

  \begin{subcons}[Synthesis]
  The formal curvilinear polygon with side-pairing structure $ D_t $ for all $ t \in X $ is now defined. Its sides and side-pairings are:
  \begin{itemize}
    \item The circles paired by the elements $ g_j(t) $ and $ f_e(t) $, defined by a simple extension of defining polynomials from $ \mc{U} $ to $X$.
    \item For each boundary component of $ S^* $, the segment $ [z,w'] \subset \tilde{L}_j(t) $ and its image under $ g_j(t) h_j(t)^{\delta_j} $, paired by the latter.
    \item For each boundary component of $ S^* $, the segment $ [w,z] \subset \tilde{L}_j(t) $ and its image under $ g_j(t) h_j(t)^{\delta_j+1} $, paired by the latter. (This will be
          a segment of a circle tangent to $g_j(t) h_j(t)^{\delta_j}[z,w'] $.)
  \end{itemize}
  \end{subcons}
\end{cons}

The regions defined by $ D_t $ are not necessarily fundamental domains for $ G(t) $ when $ t $ is sufficiently far from $ \mc{U} $, so
the construction alone does not certify discreteness.

\begin{thm}[\cref{mainthm2}]\label{thm:mainthm}
  Let $ G \leq \PSL(2,\C) $ be a convex cofinite manifold group which admits a $\Lambda$-circle chain for some maximal rational lamination $ \Lambda $.
  Let $ G(t) $ be a family of representations in $ \Hom(G,\PSL(2,\C)) $ that depends algebraically on $ t \in X $ for some parameter space $ X $,
  so that $ G(0) = G $. There exists an open, nontrivial, full-dimensional semi-algebraic neighbourhood $ \mc{Y} $ of $ G $ in $ \QH(G) \subset X $. Further, there is a countable
  open cover of $ \QH(G) $ consisting of sets of this form, indexed by the product of the set of maximal rational laminations with $ \Z^N$ for some large $N$ depending on $G$.
\end{thm}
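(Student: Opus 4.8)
The plan is to take \cref{cons:twisting} as the source of candidate domains and then carve out, semi-algebraically, the locus on which these candidates are genuine fundamental domains. Recall that the construction furnishes, for every $ t \in X $, a formal curvilinear polygon with side-pairing structure $ D_t $ whose defining circles and side-pairing M\"obius maps vary $\R$-algebraically with $ t $, after the countable subdivision by the coarse holonomies $ \bm{\delta} = (\delta_1,\ldots,\delta_s) $. First I would write down the conditions for $ D_t $ to be an honest fundamental domain for the action of $ G(t) $ on $ \Omega(G(t)) $: namely (i) each six-circle configuration of a peripheral piece continues to form a skew-hexagon; (ii) the resulting curvilinear polygon is embedded, so no two of its sides overlap or cross improperly; and (iii) the twisted-line arcs are correctly paired by $ g_j(t)h_j(t)^{\delta_j} $ and $ g_j(t)h_j(t)^{\delta_j+1} $ and close up without gaps. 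By \cref{lem:circle_intersection} the embeddedness and disjointness requirements of (ii)--(iii) are semi-algebraic, and by \cref{lem:line_incidence} and \cref{cor:incidence_inequalities} the incidence requirements of (i) are semi-algebraic; intersecting these defines a semi-algebraic set $ \mc{Y} = \mc{Y}_{\Lambda,\bm{\delta}} \subset X $.

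The second step is to verify that $ \mc{Y} $ is open and satisfies the hypotheses of Poincar\'e's polyhedron theorem (see~\cite{maskit}). The only non-inequality constraint is the vertex angle-sum condition, but the deformation of the skew-hexagon domains in \cref{cons:twisting} already shows that the angle sum at each skew-hexagon vertex is the analytic continuation of its value $ 2\pi $ on the uncountable set $ \mc{U} \subset \mc{P}_\Lambda $, hence is identically $ 2\pi $ on the whole connected component of the skew-hexagon locus containing $ \mc{U} $. Thus on $ \mc{Y} $ the angle conditions hold automatically, so $ \mc{Y} $ is cut out purely by strict inequalities and is therefore open; since $ D_0 $ is a genuine fundamental domain by \cref{thm:funddom}, we have $ 0 \in \mc{Y} $, and openness makes $ \mc{Y} $ a full-dimensional (nontrivial) neighbourhood of $ G $. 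Poincar\'e's theorem then certifies that $ G(t) $ is discrete and geometrically finite for each $ t \in \mc{Y} $, with a fundamental domain of the combinatorial type fixed by $ (\Lambda,\bm{\delta}) $; constancy of this combinatorial type along the connected component of $ \mc{Y} $ containing $ 0 $ forces every quotient $ \H^3/G(t) $ to be homeomorphic to $ M $, and continuity of the family from $ D_0 $ exhibits each $ G(t) $ as a quasiconformal deformation of $ G $, so $ \mc{Y} \subset \QH(G) $ by \cref{thm:mardentukia}.

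For the covering statement I would show that every $ g \in \QH(G) $ lies in some $ \mc{Y}_{\Lambda,\bm{\delta}} $. Fix such a $ g $; it is geometrically finite, so $ \Omega(g)/g $ carries a maximal rational lamination $ \Lambda $ realising the cutting pattern of a minimally bounded fundamental domain, and the associated pants subgroups $ \Pi_{i,j} $ are quasi-Fuchsian. Running \cref{cons:twisting} for this $ \Lambda $ produces the formal polygon $ D_g $ out of round circles orthogonal to the circles through the three fixed points $ \Fix^+ A,\Fix^+ B,\Fix^+ C $ of each $ \Pi_{i,j} $; the only remaining freedom is the integer coarse holonomy along each glued leaf, which one reads off from the relative position of the $ g_j $-translated boundary circles and records as $ \bm{\delta} \in \Z^{s} \hookrightarrow \Z^N $. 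With this matching choice of $ (\Lambda,\bm{\delta}) $ the polygon $ D_g $ satisfies (i)--(iii), so $ g \in \mc{Y}_{\Lambda,\bm{\delta}} $. Ranging over the countably many maximal rational laminations $ \Lambda $ on $ \partial M $ and over $ \bm{\delta} \in \Z^N $, where $ N $ bounds the number of gluing leaves over all $ \Lambda $, then yields the required countable open cover.

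The main obstacle will be this covering step, and specifically the claim that the twist-thickening $ \bigcup_{\bm{\delta}} \mc{Y}_{\Lambda,\bm{\delta}} $ reaches groups whose convex-core bending is \emph{irrational}: such $ g $ lie on no rational pleating variety $ \mc{P}_\Lambda $, so one cannot simply invoke density of the $ \mc{P}_\Lambda $, since an open set containing a dense set need not be everything. The point to establish is that the directions transverse to $ \mc{P}_\Lambda $ inside $ \QH(G) $ are precisely the shear and Dehn-twist directions that \cref{cons:twisting} models, and that continuous twisting together with the integer jumps indexed by $ \bm{\delta} $ sweeps out a full transverse neighbourhood of $ \mc{P}_\Lambda $; the angle-sum rigidity of the skew-hexagons is what keeps the candidate domain valid throughout this transverse motion. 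Verifying that these transverse twist-neighbourhoods genuinely exhaust $ \QH(G) $, rather than merely accumulating on it, is the delicate part, and is where the geometric finiteness of \emph{every} $ g \in \QH(G) $---which guarantees a finite-sided domain of some combinatorial type available to be matched by a choice of $ (\Lambda,\bm{\delta}) $---must be used in an essential way.
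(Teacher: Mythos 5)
Your first two paragraphs are, in substance, the paper's own proof of \cref{thm:mainthm}: the set $\mc{Y}$ is carved out by the same conditions (the paper's ``local'' conditions on each peripheral group's six circles and on the twisted lines $\tilde{L}_j(t)$, plus the ``global'' disjointness condition, semi-algebraic by \cref{lem:circle_intersection}), the $2\pi$ angle sums are recovered by the same analytic-continuation-from-$\mc{U}$ argument (which the paper places inside \cref{cons:twisting} rather than in the proof of the theorem), and openness, nontriviality, and full-dimensionality follow because everything is a strict inequality satisfied on a neighbourhood of $\mc{U}$. Your explicit appeal to Poincar\'e's polyhedron theorem is a reasonable fleshing-out of the paper's terser ``correct combinatorial arrangement, with the correct angle sums; in particular it defines a fundamental domain.''

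The genuine gap is in your third paragraph, the covering step, and you have diagnosed it yourself in the fourth. The specific failure: \cref{cons:twisting} is anchored at a basepoint $G(0)$ lying \emph{on} $\mc{P}_\Lambda$ --- the sets $\mc{U}_j$, the coarse holonomies $\bm{\delta}$, and the analytic continuation of the angle sums are all defined relative to that basepoint --- so for an arbitrary $g \in \QH(G)$ with irrational bending lamination you cannot ``run \cref{cons:twisting} for this $\Lambda$'' at $g$ itself. Moreover, the assertion that the resulting polygon $D_g$ satisfies your conditions (i)--(iii) for a maximal rational lamination chosen only to exist topologically on $\Omega(g)/g$ is exactly what has to be proved, and for most choices of $\Lambda$ it is false: the associated pants subgroups of $g$ are quasi-Fuchsian with no control on how distorted their peripheral structure is, so the circles orthogonal to the fixed-point circles need not assemble into an embedded polygon at all. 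What is actually required is to take $\Lambda$ to be a rational approximation of the bending lamination of $g$, a basepoint on $\mc{P}_\Lambda$ close to $g$, and a quantitative lower bound on the size of the validity region $\mc{Y}$ around that basepoint guaranteeing it reaches $g$; no such uniformity is established. You should be aware that the paper's own proof handles this step in a single sentence --- the union fills $\QH(G)$ ``since pleating rays for maximal rational laminations are dense'' --- which suffers from precisely the defect you name: an open set containing a dense set need not be the whole space. So your critique is correct, it applies to the published argument as well as to your own attempted repair, and closing it would require the uniform-thickness estimate that neither you nor the paper supplies.
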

\begin{proof}
  The set $ \mc{Y} $ will simply be a semi-algebraic set of points $ t \in X $ such that \cref{cons:twisting} `works'. It is clear that this set will be
  full-dimensional and nontrivial since it contains an open ball (with respect to the Teichm\"uller topology) around points close to $ G $ on the $\Lambda$-pleating variety
  (the domain in \cref{cons:twisting} is combinatorially stable under small movements of $t$ in any direction in $ \QH(G) $).

  We can form a fundamental domain for the deformed element so long as (i) all the peripheral groups are still discrete, and (ii) the domains of the individual
  peripheral groups have not started to overlap. All the circles defined in \cref{cons:twisting} are algebraically defined in terms of $ t $ and
  the initial choice of $ G $, modulo extraction of some square roots; these root extractions can be replaced by dummy variables, as we did in \cref{ex:schottky_85}, to keep the problem algebraic.
  The necessary conditions are:
  \begin{enumerate}
    \item \textit{Local conditions}. For each peripheral group $ \Pi_i(t)$ in the circle chain:
      \begin{enumerate}
        \item[L1.] The three pairs of incident circles in the domain for $ \Pi_i(t) $ in $ \Delta_i(t) $ must be mutually disjoint;
        \item[L2.] Each line $ \tilde{L}_{j}(t) $ must meet the two incident circles (the paired circles at
              its ends) at an angle in $ (0,\pi/2) $, which is an algebraic condition using the quadratic form $ \rad^2 $~\cite[Vol. II,~Proposition 20.4.2]{berger},
              and must not hit any other parameterised circle between these two incidence points.
      \end{enumerate}
    \item \textit{Global conditions}.
      \begin{enumerate}
        \item[G1.] For all $ i,j $, if the closures of the curvilinear domains for $ \Pi_i(0) $ and $ \Pi_j(0) $ are disjoint,
              then the closures of the curvilinear domains for $ \Pi_i(t) $ and $ \Pi_j(t) $ must be disjoint; that is, the total
              grafted fundamental domain must be without self-overlap. By \cref{lem:circle_intersection}, this amounts to a
              semi-algebraic condition.
        \item[G2.] Additional inequalities coming from \cite[Theorem~5.21]{basu} must be added to cut out the connected component of the semi-algebraic set
              that contains $ G $.
      \end{enumerate}
  \end{enumerate}
  If $ \mc{Y} $ is the semi-algebraic subset cut out by these inequalities and $ t \in \mc{Y} $, then the formal curvilinear polygon
  produced by \cref{cons:twisting} is an actual non-empty subset, locally cut out by circles paired by the putative side-pairing elements
  in the correct combinatorial arrangement, with the correct angle sums; in particular it defines a fundamental domain for $ G(t) $ as
  desired, varying algebraically with $ t $. There are two subtle points connected with the two global conditions.

  First, \textit{a priori} just having a fundamental domain for the conformal action of a subgroup of $ \PSL(2,\C) $ on the complex plane which satisfies
  the angle-sum conditions around vertices is not enough to guarantee discreteness: for example, the group could be a cone-manifold holonomy group with a
  cone angle along a closed loop that is not a sub-multiple of $\pi$, so the cone singularity is not visible from the conformal structure on the boundary (c.f.\ \cref{ex:cone_mfd}).
  It is here that (G2), i.e. that we restrict to the connected component containing the known discrete and geometrically finite base-point
  $ G $, is necessary: this rules out such degeneracies since, by the standard theorems about the boundary of $ \QH(G) $, any path which starts at $ G $ and ends at an indiscrete
  group must pass through a point $ G(t) \in \partial \QH(G) $ and for every such point either a boundary-parallel element of $ G(t) $ is parabolic that was
  not parabolic in $ G $ (i.e.\ $ G(t) $ is a cusp point) or a component of $ \Omega(G) $ vanishes in $ \Omega(G(t)) $ (i.e.\ $G(t) $ has a geometrically
  infinite end); both of these are detected by the conformal structures of $ G(t) $ and so the conditions we have given are sufficient to ensure that $G(t)$
  is in fact discrete.

  Second, in the usual formulations of polyhedron theorems it is necessary for translates of the fundamental domain under all elements of the group
  to be pairwise disjoint, while in (G1) we only require the domain to not overlap with itself. This is in fact sufficient, by the following short argument:
  if the polygon does not self-overlap and if the angle-sums around vertices add up to $ 2\pi $, then the side pairings define a complex structure on the
  correct union of surfaces, and the holonomy groups of these complex structures are the groups generated by the corresponding peripheral subgroups. By the argument in
  the previous paragraph again, any parabolics or arbitrarily short elements appearing in the global group $ G(t) $ must be visible in the pinching or vanishing of the
  complex structure of one of these surfaces, i.e.\ in the group generated by the peripheral subgroups $ \Pi_i(t) $, so until the point of overlap (where we no longer
  have a well-defined conformal surface obtained from the side pairings)
  we still have a correct fundamental domain so long as the local conditions (L1) and (L2) hold.

  From these inequalities, we obtain a countable family of semi-algebraic sets covering $ \mc{P}_\Lambda $, one for each covering set $ \mc{U}_j $ as explained in \cref{thm:funddom},
  and the union of these countably many sets for all $ \Lambda $ fill $ \QH(G) $ since pleating rays for maximal rational laminations are dense in $ \QH(G) $
  (see \cref{obs:cut_up_flat}).
\end{proof}

\begin{rem}
  Markowitz~\cite{markowitz24} has given an algorithm for discreteness in $ \SL(2,\R) $ which involves the recognition
  of peripheral structures in $ \H^2 $: the peripheral discs are arcs on $ \Sph^1 $, and a group is indiscrete if parts of the Cayley graph which `should'
  lead to the endpoints of such a peripheral structure (`leftmost paths') overlap in the interior of $ \H^2 $. The obstruction to extending this algorithm to $ \H^3 $
  is that peripheral structures can collapse without such overlaps occuring detectably. \Cref{thm:mainthm} is based on a similar idea: it starts with groups which are
  known to not have any overlaps (i.e.\ they have provably non-empty peripheral structures) and then deforms them slightly so that these structures do not self-intersect.
\end{rem}

\sloppy
\printbibliography

\end{document}